\newtheorem{theorem}{Theorem}[section]
\newtheorem*{theorem*}{Theorem}
\newtheorem{lemma}[theorem]{Lemma}
\newtheorem{corollary}[theorem]{Corollary}
\newtheorem*{corollary*}{Corollary}
\newtheorem{proposition}[theorem]{Proposition}
\newtheorem*{proposition*}{Proposition}
\newtheorem{question}[theorem]{Question}
\theoremstyle{definition}
\newtheorem*{definition*}{Definition}
\newtheorem{definition}[theorem]{Definition}
\theoremstyle{remark}
\newtheorem*{remark*}{Remark}
\theoremstyle{plain}
\DeclareMathOperator{\Av}{Av}
\DeclareMathOperator{\iter}{Iter}
\DeclareMathOperator*{\llimsup}{limsup}
\renewcommand{\limsup}{\llimsup}
\DeclareMathOperator{\Nil}{Nil}
\DeclareMathOperator{\Bohr}{Bohr}
\DeclareMathOperator{\PW}{PW-}
\DeclareMathOperator{\SH}{SG}
\DeclareMathOperator{\Sumset}{S}
\newcommand{\bbullet}{{e,e}}
\newcommand{\wh}{\widehat}
\newcommand{\norm}[1]{\Vert #1 \Vert}
\newcommand{\nnorm}[1]{|\!|\!| #1 |\!|\!|}
\newcommand{\type}[1]{^{[#1]}}
\newcommand{\cube}[1]{^{(#1)}}
\newcommand{\Z}{{\mathbb Z}}
\newcommand{\T}{{\mathbb T}}
\newcommand{\N}{{\mathbb N}}
\newcommand{\E}{{\mathbb E}}
\newcommand{\CA}{{\mathcal A}}
\newcommand{\CC}{{\mathcal C}}
\newcommand{\CF}{{\mathcal F}}
\newcommand{\CB}{{\mathcal B}}
\newcommand{\CP}{{\mathcal P}}
\newcommand{\bg}{\mathbf{g}}
\newcommand{\bh}{\mathbf{h}}
\newcommand{\bx}{\mathbf{x}}
\newcommand{\by}{\mathbf{y}}
\newcommand{\bI}{\mathbf{I}}
\newcommand{\one}{{\boldsymbol 1}}
\begin{document}
\title{Nil-Bohr Sets of Integers}

\author{Bernard Host}
\address{Laboratoire d'analyse et de math\'ematiques appliqu\'{e}es, Universit\'e 
de Marne la Vall\'ee \& CNRS UMR 8050\\
5 Bd. Descartes, Champs sur Marne\\
77454 Marne la Vall\'ee Cedex 2, France}
\email{bernard.host@univ-mlv.fr}

\author{Bryna Kra}
\address{Department of Mathematics, Northwestern University \\ 2033 Sheridan Road Evanston \\ IL 60208-2730, USA} 
\email{kra@math.northwestern.edu}

\thanks{The first author was partially supported by the Institut
Universitaire de France and the second by NSF grant 
$0555250$ and by the Clay Mathematics Institute. This work was begun 
during the visit of the authors to MSRI and completed while the 
second author was a visitor at Institut Henri Poincar\'e; 
we thank the institutes
for their hospitality.}

\begin{abstract}
We study relations between subsets of integers that are large, 
where large can be interpreted in terms of size (such as a set of 
positive upper density or a set with bounded gaps) or in terms of 
additive structure (such as a Bohr set).  Bohr sets are fundamentally 
abelian in nature and are linked to Fourier analysis.  Recently it 
has become apparent that a higher order, non-abelian, Fourier 
analysis plays a role in both additive combinatorics and in ergodic 
theory.  Here we introduce a higher order version of Bohr 
sets and give various properties of these objects, generalizing 
results of Bergelson, Furstenberg, and Weiss.
\end{abstract}
\maketitle

\section{Introduction}

\subsection{Additive combinatorics and Bohr sets}
Additive combinatorics is the study of structured subsets 
of integers, concerned with questions such as what 
one can say about sets of integers 
that are 
large in terms of size or about sets that are large in terms of 
additive structure.  An interesting problem 
is finding various relations between classes of large sets.

Sets with positive upper Banach density or syndetic sets\footnote{If $A\subset\Z$, the 
{\em upper Banach density} $d^*(A)$ is defined to be $$\limsup_{b_n-a_n\to\infty}\frac{|A\cap 
[a_n,b_n]|}{b_n-a_n} \ .$$  The set $A\subset\Z$ is said to be {\em 
syndetic} if it intersects every sufficiently large interval.} 
are examples of sets that are large in terms of size.  
A simple result relates these two notions: if $A\subset\Z$ has positive 
upper Banach density, then the set of differences $\Delta(A) = A-A = \{a-b\colon a,b\in 
A\}$ is syndetic. 

An example of a structured set is a Bohr set.  
Following a modification of the traditional definition
introduced in~\cite{BFW}, we say that 
a subset $A\subseteq\Z$ is a $\Bohr$ set if there exist 
$m\in\N$, $\alpha\in\T^m$, and an open set $U\subset\T^m$ such that 
$$
\{n\in\Z\colon n\alpha\in U\}
$$
is contained in $A$ (see 
Definition~\ref{def:Bohr}).
It is easy to check that the class of Bohr sets is closed under translations. 

Most of the notions of a large set that are defined solely in terms of size are 
also closed under translation.  However, we have important classes of structure sets 
that are not closed under translation.  One particular example is that of 
a $\Bohr_0$-set: a subset $A\subseteq\Z$ is a $\Bohr_0$-set if  it is a Bohr set such 
that the set $U$ in the previous definition contains
$0$.

A simple application of the pigeonhole principle gives that if $S$ 
is an infinite set of integers, then $S-S$ has nontrivial 
intersection with every $\Bohr_0$-set.  
This is another example of largeness: a set is large if it has 
nontrivial intersection with every member of some class of sets.  
Such notions of largeness are generally referred to as dual notions and are
denoted with a star.  For example, a $\Delta^*$-set is a set that has 
nontrivial intersection with the set of differences $\Delta(A)$ from any infinite set $A$.  

Here we study converse results.  If a set intersects every set of a 
given class, then our goal is to show that it has some sort of structure. 
Such theorems are not, in general, exact converses of the direct structural 
statements.  For example, there exist $\Delta^*$-sets that are not 
$\Bohr_0$-sets (see~\cite{BFW}).  But, this statement is not far 
from being true.  Strengthening a result of~\cite{BFW}, we show 
(Theorem~\ref{th:Delta}) that
a $\Delta^*$-set is a piecewise $\Bohr_0$-set, 
meaning that it agrees with a $\Bohr_0$-set on a sequence of 
intervals whose lengths tend to infinity.

\subsection{Nil-Bohr sets}
Bohr sets are fundamentally linked to abelian groups and Fourier 
analysis.  In the past few years, it has become apparent in both 
ergodic theory and additive combinatorics that nilpotent groups and 
a higher order Fourier analysis play a role (see, for example,~\cite{gowers},~\cite{HK}, and~\cite{GT}). 
As such, we define a $d$-step nil-$\Bohr_0$-set, analogous to the definition of a 
$\Bohr_0$-set, but with a nilmanifold replacing the role of an abelian 
group (see Definition~\ref{def:nilBohr}).  
For $d=1$, the abelian case, this is exactly the object studied 
in~\cite{BFW}.  Here we generalize their results for $d\geq 1$.

We obtain a generalization of Theorem~\ref{th:Delta} on different sets, introducing 
the idea of a set of sums with gaps.  For an integer $d\geq 0$ and
an infinite sequence $P=(p_i)$ in  $\N$, the
\emph{set of sums with  gaps of length $< d$} of $P$ 
is defined to be the set $\SH_d(P)$ 
of all integers of the form
$$
\epsilon_1p_1+\epsilon_{2}p_{2}+\dots+\epsilon_np_n\ ,
$$
where   $n\geq 1$ is an integer, $\epsilon_i\in\{0,1\}$  for $1\leq i\leq n$, 
the $\epsilon_i$ are not all equal to $0$, and the blocks of 

Our main result (Theorem~\ref{th:sums})
is that a set with nontrivial intersection with 
any $\SH_d$-set is a piecewise $d$-step nilpotent $\Bohr_0$-set.  

\subsection{The method}
The first ingredient in the proof is a modification and extension of 
the Furstenberg Correspondence Principle.  The classical Correspondence 
Principle gives a relation between sets of integers and measure 
preserving systems, relating the size of the sets of integers to the
measure of some sets of the system.  It does not 
give relations between structures in the set of integers under 
consideration and ergodic properties of the corresponding system.  
Some information of this type is provided by our modification 
(originally introduced in~\cite{HK3}).  

We then are left with studying certain properties of the systems that arise 
from this correspondence.  As in several related problems, the properties of the 
system that we need are linked to certain factors of the system, 
which are nilsystems.  This method and these factors were introduced 
in the study of convergence of some multiple ergodic averages 
in~\cite{HK}.  

Working within these factor systems, we conclude 
by making use of techniques for the analysis of nilsystems that have been developed 
over the last few years.  
In the abelian setting, a fundamental tool is the Fourier transform, 
but no analog exists for higher order nilsystems\footnote{The theory 
of representations does not help us, as the interesting 
representations of a nilpotent Lie group are infinite dimensional.}.  
Another classical tool available in the abelian case is the 
convolution product, but this too is not defined for general 
nilsystems.   Instead, in Section~\ref{sec:four} we build some spaces
and measures that take on the role of the convolution.  
As an example, if $G$ is a compact abelian group 
we can consider the subgroup 
$$\{(g_1, g_2, g_3, g_4)\in G^4\colon g_1+g_2 
= g_3+g_4\}$$ 
of $G^4$, and we take integrals with respect to its Haar 
measure.  This replaces the role of the convolution product.  

These constructions are then used to prove the key convergence result 
(Proposition~\ref{prop:ouf}).  By studying the limit, 
Theorem~\ref{th:sums} is deduced in Section~\ref{sec:proofmain}.  
By further iterations, Theorems~\ref{th:Delta} and~\ref{th:holes} are 
proved in Section~\ref{sec:iteration}.

\subsection*{Acknowledgment}  Hillel Furstenberg introduced us to this 
problem and we thank him for his encouragement, as well as for helpful comments on 
a preliminary version of this article. 

\section{Precise statements of definitions and results}

\subsection{Bohr sets and Nil-Bohr sets} 

We formally define the objects described in the introduction:
\begin{definition}
\label{def:Bohr}
A subset $A\subseteq\Z$ is a $\Bohr$ set if there exist 
$m\in\N$, $\alpha\in\T^m$, and an open set $U\subset\T^m$ such that 
$$
\{n\in\Z\colon n\alpha\in U\}
$$
is contained in $A$; the set $A$ is a $\Bohr_0$-set if additionally $0\in U$.
\end{definition}

Note that these sets can also be defined in terms of the topology induced on $\Z$ by embedding 
the integers into the Bohr compactification: a subset of $\Z$ is Bohr if it contains an open set 
in the induced topology and is $\Bohr_0$ if it contains an open neighborhood of $0$ 
in the induced topology.

We can generalize the definition of a $\Bohr_0$-set 
for return times in a nilsystem, rather than just in a torus.  
We first give a short definition of a nilsystem and 
refer to Section~\ref{sec:nilsystem} for further properties.  

\begin{definition}
If $G$ is a $d$-step nilpotent Lie group and $\Gamma\subset G$ is a discrete and cocompact 
subgroup, the compact manifold $X = G/\Gamma$ is a {\em $d$-step nilmanifold}.  
The {\em Haar measure} $\mu$ of $X$ is the unique probability measure that is 
invariant under the action $x\mapsto g\cdot x$ of $G$ on $X$ by left translations.  

If $T$ denotes left translation on $X$ by a fixed element of $G$, then $(X, \mu, T)$ 
is a {\em $d$-step nilsystem}.  
\end{definition}

Using neighborhoods of a point, we define a generalization of a Bohr set: 
\begin{definition}
\label{def:nilBohr}
A subset $A\subseteq\Z$ is a $\Nil_d\Bohr_0$-set if there exist a $d$-step 
nilsystem $(X, \mu, T)$, $x_0\in X$, and an open set $U\subset X$ containing $x_0$ such that 
$$
\{n\in\Z\colon T^nx_0\in U\}
$$
is contained in $A$.
 \end{definition}

Similar to the Bohr compactification of $\Z$ that can be used to define the Bohr sets, 
there is a {\em $d$-step nilpotent compactification} of $\Z$ that can be used 
to define the $\Nil_d\Bohr_0$-sets.    
This compactification is a non-metric compact space $\wh{Z}$, 
endowed with a homeomorphism $T$ and a particular point $\wh{x_0}$ 
with dense orbit, 
and is characterized by the following properties:
\begin{enumerate}
\item Given any $d$-step nilsystem $(Z, T)$ and a point $x_0\in Z$, 
there is a unique factor map $\pi_Z\colon \wh{Z}\to Z$ with 
$\pi_Z(\wh{x_0}) = x_0$.

\item  The topology of $\wh Z$ is spanned by these factor maps 
$\pi_Z$.
\end{enumerate}

\begin{remark*}
A $\Bohr_0$-set can be defined in terms of almost periodic 
sequences.  In the same way, a $\Nil_d\Bohr_0$-set can be defined in 
terms of some particular sequences, the $d$-step {\em nilsequences}.  
Since $\Nil_d\Bohr_0$-sets are defined locally, it seems likely that 
they can be defined by certain particular types of nilsequences, 
namely those arising from generalized polynomials without constant 
terms.  We do not address this issue here.
\end{remark*}

\subsection{Piecewise versions}

If $\CF$ denotes a class of subsets of integers, various authors, 
for example Furstenberg in~\cite{Furstenberg} and 
Bergelson, Furstenberg, and Weiss in~\cite{BFW}, define a subset $A$ 
of integers to be a {\em piecewise-$\CF$ set} if $A$ contains the intersection of 
a sequence of arbitrarily long intervals
and a member of $\CF$.  For example, 
the notions of piecewise-Bohr set,
a piecewise-$\Bohr_0$-set, and a 
piecewise-$\Nil_d\Bohr_0$-set, can be defined in this way.

However, the notion of a piecewise set is rather weak: for example, a 
piecewise-Bohr set defined in this manner is not necessarily syndetic. 
The properties that we can prove are stronger than the traditional 
piecewise statements, and in particular imply 
the traditional piecewise versions.  
For this, we introduce a 
stronger definition of piecewise:
\begin{definition}
Given a class $\CF$ of subsets of integers, 
the set $A\subset\Z$ is said to be {\em strongly piecewise}-$\CF$, written $\PW\CF$, 
if for every sequence $(J_k\colon k\geq 1)$ of 
intervals whose lengths $|J_k|$ tend to $\infty$, there exists a 
sequence $(I_j\colon j\geq 1)$ of 
intervals satisfying: 
\begin{enumerate}
\item For each $j\geq 1$, there exists some $k = k(j)$ 
such that the  interval $I_j$ is contained in $J_k$;
\item The lengths $|I_j|$ tend to infinity;
\item There exists a set $\Lambda\in\CF$ such that 
$\Lambda\cap I_j\subset A$ for every $j\geq 1$.
\end{enumerate}
\end{definition}

Note that $\Lambda$ depends on the sequence $(J_k)$.
With this definition of strongly piecewise, if the class $\CF$ consists of 
syndetic sets then every $\PW\CF$-set is syndetic. In particular, 
a strongly piecewise-Bohr set, denoted 
$\PW\Bohr$, is syndetic.  Similarly, we denote a strongly piecewise-$\Bohr_0$-
set by $\PW\Bohr_0$ and a strongly piecewise-$\Nil_d\Bohr_0$-set by 
$\PW\Nil_d\Bohr_0$ and these sets are  also syndetic.

\subsection{Sumsets and Difference Sets}
\begin{definition}
Let $E\subset\N$ be a set of integers. The {\em sumset of $E$} 
is the set $\Sumset(E)$ 
consisting of all nontrivial 
finite sums of distinct elements of $E$.  

A subset $A$ of $\N$ is a {\em $\Sumset_r^*$-set} if $A\cap  
\Sumset(E)\neq\emptyset$ for every set $E\subset\N$ with $|E|=r$.
\end{definition}

We have: 
\begin{theorem}
\label{th:sums}
Every $S_{d+1}^*$-set is a $\PW\Nil_d\Bohr_0$-set.
\end{theorem}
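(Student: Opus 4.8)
The plan is to run the ergodic-theoretic machinery sketched in the introduction: translate the combinatorial intersection property into a recurrence statement for a measure-preserving system, localize the relevant information on the $d$-step nilfactor, and read off the nilsequence structure there. Since $\PW\Nil_d\Bohr_0$ is quantified over all interval sequences, I would first fix an arbitrary sequence $(J_k)$ with $|J_k|\to\infty$ and work relative to it. Along these intervals I would form empirical averages of $1_A$ and apply the modified Furstenberg correspondence principle of~\cite{HK3} to produce a system $(X,\mu,T)$ together with data encoding $A$, chosen so that density information on the $J_k$ is preserved. The reason for using the modified rather than the classical correspondence is precisely to retain enough local information to later recover an honest $\Nil_d\Bohr_0$-set on sub-intervals $I_j\subset J_{k(j)}$, rather than a bare density bound.

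The second step is to extract the arithmetic content of the $\Sumset_{d+1}^*$ hypothesis. A $(d+1)$-element set $E=\{n_1,\dots,n_{d+1}\}$ has sumset $\Sumset(E)=\{\sum_{i\in I}n_i:\emptyset\neq I\subseteq\{1,\dots,d+1\}\}$, i.e.\ the nonzero vertices of the combinatorial $(d+1)$-dimensional cube spanned by $(n_1,\dots,n_{d+1})$, and the hypothesis asserts that $A$ meets every such cube. This is exactly the shape governed in Host--Kra theory by the $(d+1)$-dimensional cube seminorm $\nnorm{\cdot}_{d+1}$, whose characteristic factor is the $d$-step nilfactor $Z_d$, an inverse limit of $d$-step nilsystems; this also explains the index shift $d+1\rightsquigarrow d$ in the statement. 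Accordingly I expect the intersection property to manifest as a lower bound on a multilinear cube average of $1_A$ over the sums $\sum_{i\in I}n_i$, and I would invoke seminorm control to replace $1_A$ by its conditional expectation $\E(1_A\mid Z_d)$ without losing that bound.

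The third step is the analytic core, and is where I expect the main obstacle. Because higher-order nilsystems carry no Fourier transform or convolution, I would use the auxiliary spaces and measures of Section~\ref{sec:four} — the nilsystem analogue of integrating over a subgroup such as $\{(g_1,g_2,g_3,g_4):g_1+g_2=g_3+g_4\}$ — to make sense of and evaluate the cube average, which is the substance of the convergence statement Proposition~\ref{prop:ouf}. Establishing this convergence, and showing that the limit has the expected form on a $d$-step nilsystem, is the delicate point: the combinatorics of summing over all subsets $I$ must be matched exactly to the cube measures, and one must control the passage to the limit uniformly enough to return to finite intervals afterwards.

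Granting Proposition~\ref{prop:ouf}, I would finish by contraposition, which is the natural way to exploit the universally quantified $\Sumset_{d+1}^*$ condition. Suppose $A$ were not $\PW\Nil_d\Bohr_0$; then some sequence $(J_k)$ witnesses the failure, meaning no $\Nil_d\Bohr_0$-set agrees with $A$ along arbitrarily long sub-intervals. Transporting this to $Z_d$ via the correspondence and applying the limit computation of the previous step, the cube average of $1_A$ over sums must degenerate, and a selection argument then produces distinct $n_1<\dots<n_{d+1}$ for which all $2^{d+1}-1$ sums $\sum_{i\in I}n_i$ avoid $A$. This exhibits a $(d+1)$-element set $E$ with $A\cap\Sumset(E)=\emptyset$, contradicting the hypothesis that $A$ is a $\Sumset_{d+1}^*$-set. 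Hence $A$ is $\PW\Nil_d\Bohr_0$, as claimed.
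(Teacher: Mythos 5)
Your scaffolding matches the paper's actual proof: the modified correspondence principle of~\cite{HK3} run relative to an arbitrary sequence $(J_k)$, the factor $Z_d$, the cube measures of Section~\ref{sec:four}, the convergence result Proposition~\ref{prop:ouf}, and an argument by contradiction. But there is a genuine gap at the decisive step, and the statement you put in its place points in the wrong direction. You claim that failure of $\PW\Nil_d\Bohr_0$ makes ``the cube average of $\one_A$ over sums degenerate,'' and that a ``selection argument'' then yields $n_1,\dots,n_{d+1}$ all of whose sums avoid $A$. To produce such a configuration what you need is \emph{strict positivity} of the cube quantity for the complement $B=\Z\setminus A$, namely
$$
\int\prod_{\emptyset\neq\epsilon\subset[d+1]}\E(\one_B\mid Z_d)(x_\epsilon)\,d\nu\cube{d+1}_e(\bx)>0\ ,
$$
where $\nu=\mu_d$ and $\nu\cube{d+1}_e$ is the cube measure of Section~\ref{sec:four}; by Proposition~\ref{prop:ouf} this integral equals the iterated limit of the averages of $\prod_{\emptyset\neq\epsilon\subset[d+1]}\one_B(n\cdot\epsilon)$, so its positivity forces some $n$ with $\Sumset(\{n_1,\dots,n_{d+1}\})\subset B$, contradicting the $\Sumset_{d+1}^*$ hypothesis. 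Vanishing of the analogous quantity for $\one_A$ says only that no cube lies entirely \emph{inside} $A$; that neither implies nor is implied by the existence of a cube inside $B$, and no selection argument can bridge the two. Relatedly, your step 2 has the role of the hypothesis backwards: $A$ meeting every sumset gives a lower bound on the \emph{sum} $\sum_\epsilon \one_A(n\cdot\epsilon)$, equivalently the identical vanishing of the \emph{product} of $\one_B$; in the paper this enters only at the very last line, as the contradiction.

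What is missing, then, is the mechanism converting ``$A$ is not $\PW\Nil_d\Bohr_0$'' into the displayed positivity --- this is the real content of Section~\ref{sec:proofmain}, namely Lemma~\ref{lem:positive3} and Corollary~\ref{cor:positive3}, and your proposal contains no substitute for it. The paper argues by induction on the cube dimension $m$: given positivity in dimension $m$, the Cauchy--Schwarz-type inequality of Proposition~\ref{prop:positive2} gives positivity over $\nu\cube{m+1}_\bbullet$; Proposition~\ref{prop:muax} shows that the fiber integral $H(z)=\int\prod h\,d\nu\cube{m+1}_{e,z}$ is a \emph{continuous} function on $Z$, so that $\Lambda=\{n\colon H(T^ne)>H(e)/2\}$ is a $\Nil_d\Bohr_0$-set; and then the standing assumption that $A$ is not strongly piecewise Nil-Bohr forces $\limsup_j|I_j\cap B\cap\Lambda|/|I_j|>0$ (otherwise $A$ would contain $\Lambda\cap J_\ell$ on arbitrarily long intervals $J_\ell$), which via Proposition~\ref{prop:intphif} closes the induction. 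This is precisely where the strongly piecewise quantification over $(J_k)$, the base point $e$, and the continuity of the fiber measures $\nu\cube{m+1}_{e,z}$ are used; without this induction the passage from the combinatorial non-structure of $A$ to a positive cube integral for $B$ is unproven, and the proof does not go through.
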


We can iterate this result, leading to the following definitions from~\cite{BFW} 
and~\cite{Furstenberg}:

\begin{definition}
If $S$ is a nonempty subset of $\N$, define the {\em difference set} $\Delta(S)$ by
$$
 \Delta(S)=(S-S)\cap\N=\{b-a\colon a\in S,\ b\in S,\ b>a\}\ .
$$
If $A$ is a subset of $\N$, 
$A$ is a {\em $\Delta^*_r$-set} if $A\cap\Delta(S)\neq\emptyset$ 
for every  subset $S$ of $\N$ with $|S|=r$; 
$A$ is a {\em $\Delta^*$-set} if $A\cap\Delta(S)\neq\emptyset$ for every 
infinite subset $S$ of $\N$.
\end{definition}

\begin{theorem}
\label{th:Delta}
Every $\Delta^*$-set is a $\PW\Bohr_0$-set.
\end{theorem}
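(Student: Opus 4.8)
The plan is to deduce Theorem~\ref{th:Delta} from the $d=1$ case of Theorem~\ref{th:sums}, together with an iteration. First I would record the abelian reduction. A $1$-step nilpotent Lie group is abelian, so a $1$-step nilsystem is a rotation on a compact abelian Lie group, i.e.\ (up to a finite factor) a torus, and for such a system the return-time sets $\{n\colon T^nx_0\in U\}$ with $x_0\in U$ are exactly the $\Bohr_0$-sets. Hence $\Nil_1\Bohr_0=\Bohr_0$ and $\PW\Nil_1\Bohr_0=\PW\Bohr_0$, so it suffices to produce the $d=1$ nil-Bohr conclusion of Theorem~\ref{th:sums} for a $\Delta^*$-set.

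Next I would build the combinatorial bridge between difference sets of infinite sets and the $d=1$ sums-with-gaps sets. Given an infinite set $S=\{s_0<s_1<s_2<\cdots\}$, set $p_i=s_i-s_{i-1}$ and $P=(p_i)$. Every element of $\Delta(S)$ has the form $s_j-s_i=p_{i+1}+\cdots+p_j$, a sum of a block of \emph{consecutive} terms of $P$, and conversely every such block sum lies in $\Delta(S)$; thus $\Delta(S)$ is precisely the set of sums of $P$ whose indices with $\epsilon_i=1$ form a single run, namely $\SH_1(P)$. Running this in reverse, by taking $S$ to be the sequence of partial sums of a given $P$, shows that the family $\{\Delta(S)\colon S\subset\N\text{ infinite}\}$ coincides with the family $\{\SH_1(P)\}$. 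Consequently the $\Delta^*$-sets are exactly the sets meeting every $\SH_1(P)$, and Theorem~\ref{th:Delta} is the $d=1$ instance of the sums-with-gaps statement advertised in the introduction.

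The remaining, and hardest, step is to pass from the finitary hypothesis literally used in Theorem~\ref{th:sums}, that $A$ meets every sumset $\Sumset(E)$ with $|E|=2$ (the $\Sumset_2^*$ property), to the genuinely weaker infinitary hypothesis that $A$ meets every $\SH_1(P)$. A $\Delta^*$-set need not be a $\Sumset_2^*$-set --- deleting finitely many integers destroys the $\Sumset_2^*$ property while preserving the $\Delta^*$ property --- so Theorem~\ref{th:sums} cannot be invoked as a black box, and one must instead iterate the construction behind it. I would argue by contraposition through a diagonal construction across scales: if $A$ is not $\PW\Bohr_0$, there is a sequence $(J_k)$ with $|J_k|\to\infty$ admitting no legal choice of intervals $I_j$ and set $\Lambda$; exploiting this obstruction scale by scale, and using the mechanism of Theorem~\ref{th:sums} at each scale to locate the needed sumset configurations, I would choose successive terms $s_0<s_1<\cdots$, each lying in a deeper $J_k$, so that all newly created differences $s_j-s_i$ avoid $A$. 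This yields an infinite $S$ with $\Delta(S)\cap A=\emptyset$, contradicting the $\Delta^*$ hypothesis.

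Finally I would package the output in the strong-piecewise format, extracting from the construction intervals $I_j$ with $I_j\subset J_{k(j)}$ and $|I_j|\to\infty$, together with a \emph{single} $\Bohr_0$-set $\Lambda$ satisfying $\Lambda\cap I_j\subset A$ for all $j$. The main obstacle I anticipate is exactly this uniformity: producing one rotation (one $\Bohr_0$-set) valid on every piece simultaneously, rather than a different torus datum per scale, is what forces the iterative/diagonal argument in place of a one-shot appeal to Theorem~\ref{th:sums}, and it requires controlling the limiting abelian data along the entire sequence of scales at once.
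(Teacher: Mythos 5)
Your route is the one the paper itself takes: the reduction $\Nil_1\Bohr_0=\Bohr_0$, the identification of difference sets of infinite sets with the sets $\SH_1(P)$ (so that $\Delta^*$-sets are exactly $\SH_1^*$-sets), and then a proof by contradiction in which one builds an infinite sequence whose difference set misses $A$ --- this is precisely how the paper deduces Theorem~\ref{th:Delta} as the $d=1$ case of Theorem~\ref{th:holes}. You are also right, and this is a genuine insight, that Theorem~\ref{th:sums} cannot be quoted as a black box (the $\Sumset_2^*$ hypothesis is strictly stronger than $\Delta^*$), so its proof mechanism must be iterated.

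The gap is in the iteration itself, which you state in a greedy form: ``choose successive terms $s_0<s_1<\cdots$ so that all newly created differences $s_j-s_i$ avoid $A$.'' Nothing in that formulation explains why a next term always exists: knowing that the differences created so far lie in $B=\Z\setminus A$ gives no control over the set of admissible continuations, and a greedy choice can die after finitely many steps. What the paper maintains is a quantitative inductive invariant, namely~\eqref{eq:recurr}: writing $E_j$ for the set of sums already forced into $B$ and $\phi_{j,1}=\prod_{q\in E_j}T^qf$ (the indicator of $B_j=\bigcap_{q\in E_j}(B-q)$ transported to the system $X$ given by the correspondence principle), one keeps the integral of the conditional expectation $\E(\phi_{j,1}\mid Z_1)$ against the relevant cube measure strictly positive. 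The next gap $p_{j+1}$ (note: it is the gaps $p_j$, not the terms $s_j$, that are drawn from the intervals $\bI$) is then produced by combining Proposition~\ref{prop:positive2}, Lemma~\ref{lem:positive3} and Proposition~\ref{prop:ouf}, which convert the measure-theoretic positivity into positivity of iterated averages over $\bI$ and hence into an actual integer; and, crucially, the hypothesis that $A$ is \emph{not} $\PW\Bohr_0$ is re-invoked at \emph{every} step of the iteration (inside Lemma~\ref{lem:positive3}), not just once to fix the scales. Your last paragraph also inverts the logic slightly: no ``packaging'' of a single $\Bohr_0$-set is needed at the end, because the proof is a pure contradiction; the single set $\Lambda$ valid on all pieces appears automatically in the proof of Lemma~\ref{lem:positive3} as a sublevel set $\{n\colon H(T^ne)>\delta/2\}$ of a continuous function on the Kronecker factor $Z_1$, the uniformity across scales being supplied by the correspondence principle, which fixes one system and one sequence of intervals $\bI$ sitting inside the given $(J_k)$.
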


Every $\Delta^*_r$ set is obviously a $\Delta^*$-set and 
Theorem~\ref{th:Delta} generalizes a result of~\cite{BFW}. 
The class of sets of the form $\Delta(S)$ with $|S|=3$ coincides with 
the class of sets of the form $\Sumset(E)$ with $|E|=2$ and thus 
the classes $\Delta_3^*$ and $\Sumset_2^*$ are the same.  
Theorem~\ref{th:Delta} generalizes the case $d=1$ of Theorem~\ref{th:sums}.

The converse statement of Theorem~\ref{th:Delta} does not hold.  However, it is easy 
to check that every $\Bohr_0$-set is a $\Delta^*$-set (see~\cite{BFW}).

\begin{definition}
\label{def:SH}
Let $d\geq 0$ be an integer and
let $P=(p_i)$ be a (finite or infinite)  sequence in  $\N$. 
The \emph{set of sums with  gaps of length $< d$} of $P$ 
is the set $\SH_d(P)$ 
of all integers of the form
$$
\epsilon_1p_1+\epsilon_{2}p_{2}+\dots+\epsilon_np_n\ ,
$$
where   $n\geq 1$ is an integer, $\epsilon_i\in\{0,1\}$  for $1\leq i\leq n$, 
the $\epsilon_i$ are not all equal to $0$, and the blocks of 
consecutive $0$'s between two $1$'s have length $<d$.

A subset $A\subseteq\N$ is an $\SH_d^*$-set if $A\cap 
\SH_d(P)\neq\emptyset$ for every infinite sequence $P$ in $\N$.
\end{definition}

Note that in this definition, $P$ is a sequence and not a subset of 
$\N$.  

For example, if $P=\{p_1,p_2,\dots\}$, 
then $\SH_1(P)$ is the set of all sums $p_m+\dots+p_n$ of consecutive 
elements of $P$, 
and thus it coincides with the set $\Delta(S)$ where 
$S=\{s,s+p_1,s+p_1+p_2,\dots\}$.
Therefore $\SH_1^*$-sets are the same as $\Delta^*$-sets.

For a sequence $P$, $\SH_2(P)$ consists of all sums of the form
$$
\sum_{i=m_0}^{m_1}p_i+\sum_{i=m_1+2}^{m_2}p_i+\dots+
\sum_{i=m_{k-1}+2}^{m_k}p_i+\sum_{i=m_k+2}^{m_{k+1}}p_i\ ,
$$
where $k\in\N$ and $m_0, m_1,\ldots, m_{k+1}$ are 
positive integers satisfying $m_{i+1}\geq m_i +2$ for $i=0, \ldots, k$. 

\begin{theorem}
\label{th:holes}
Every $\SH_d^*$-set is  a $\PW\Nil_d\Bohr_0$-set.
\end{theorem}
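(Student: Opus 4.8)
The plan is to deduce Theorem~\ref{th:holes} from the convergence machinery behind Theorem~\ref{th:sums} (Proposition~\ref{prop:ouf}) by an iteration on the combinatorial structure of the sets $\SH_d(P)$. A first observation is that a one-step reduction cannot suffice: taking $P$ to begin with the $d+1$ elements of a set $E$ shows $\Sumset(E)\subseteq\SH_d(P)$, so every $\Sumset_{d+1}^*$-set is an $\SH_d^*$-set, and these classes do not coincide (already for $d=1$, where $\SH_1^*=\Delta^*$ strictly contains $\Delta_3^*=\Sumset_2^*$). Thus $A\cap\SH_d(P)\neq\emptyset$ carries much weaker information than the sumset hypothesis of Theorem~\ref{th:sums}, and the extra strength must be recovered by iteration. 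The feature that makes this possible is the self-similarity of $\SH_d(P)$ under concatenation with bounded gaps: a sum formed from the indices up to some $m$ may be extended by a sum formed from later indices, provided the gap between the last chosen index and the next one is at most $d$, and the result is again an element of $\SH_d(P)$. This lets one produce a single element of $\SH_d(P)$ as a composition of many controlled steps rather than as one return.

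First I would fix an $\SH_d^*$-set $A$ together with a sequence $(J_k)$ of intervals with $|J_k|\to\infty$, and apply the modified Furstenberg correspondence of~\cite{HK3} along the averaging furnished by $(J_k)$, producing a system $(X,\mu,T)$, a set $B$ with $\mu(B)>0$, and an encoding of the indicator of $A$ in terms of return times to $B$. Under this correspondence, the hypothesis that $A$ meets $\SH_d(P)$ for every infinite sequence $P$ becomes a statement of iterated recurrence: along sums with gaps of length $<d$, the orbit of $B$ returns to a set of positive measure.

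The heart of the matter is the iteration itself. Using the concatenation structure above, I would realize a prospective $\SH_d(P)$-sum as a $d$-fold composition of return-time steps and evaluate the resulting multiple average via Proposition~\ref{prop:ouf}. The key point—and the step I expect to be the main obstacle—is to verify that the characteristic factor for these averages is exactly a $d$-step nilsystem: the gap bound $<d$ must be matched precisely to the order of the maximal nilfactor so that the iteration neither loses the recurrence nor raises the nilpotency step above $d$. Granting this, the limit is governed by a $d$-step nilsystem, and positivity of the limit yields a point $x_0$ in such a system and an open neighborhood $U\ni x_0$ whose return-time set $\{n\colon T^nx_0\in U\}$ is contained in $A$ at the relevant scales, exactly as in Definition~\ref{def:nilBohr}.

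Finally I would extract the strong piecewise structure. The nilsystem data $(X,\mu,T)$, $x_0$, and $U$ furnish the single $\Nil_d\Bohr_0$-set $\Lambda$, while the averaging structure inherited from $(J_k)$ furnishes intervals $I_j$, each contained in some $J_{k(j)}$, with $|I_j|\to\infty$ and $\Lambda\cap I_j\subseteq A$; this is precisely the assertion that $A$ is $\PW\Nil_d\Bohr_0$. Two points require care: ensuring that one and the same $\Lambda$ serves all the intervals $I_j$ (the uniformity built into the strong piecewise definition), which constrains the order in which the limits are taken in the correspondence, together with the step-preservation issue flagged above. The case $d=1$ of this scheme recovers Theorem~\ref{th:Delta}, and the general statement follows by carrying the same iteration through $d$ layers.
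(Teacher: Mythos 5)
Your opening observation (every $\Sumset_{d+1}^*$-set is $\SH_d^*$, so this theorem strictly strengthens Theorem~\ref{th:sums} and some iteration is unavoidable) is correct, and iteration is indeed how the paper proceeds. But the engine of your iteration runs backwards, and this is a genuine gap. You propose to push the $\SH_d^*$ hypothesis through the correspondence principle, so that ``$A$ meets $\SH_d(P)$ for every infinite $P$'' becomes ``iterated recurrence: the orbit of $B$ returns to a set of positive measure along sums with gaps,'' and then to extract the Nil-Bohr set from \emph{positivity} of the limit in Proposition~\ref{prop:ouf}. No such transfer exists: the correspondence (Proposition~\ref{prop:intphif}) converts averages of the fixed sequences $\one_A$, $\one_B$ into integrals over $(X,\mu,T)$; it has no way to absorb a statement universally quantified over all infinite sequences $P$. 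In the paper the logic is inverted: one sets $B=\Z\setminus A$ and assumes the \emph{conclusion} fails, i.e.\ $A$ is not $\PW\Nil_d\Bohr_0$; it is this assumption, not the $\SH_d^*$ property, that yields positivity of every cube integral in the induction (Lemma~\ref{lem:positive3} and Corollary~\ref{cor:positive3}: if such an integral vanished, the argument of Corollary~\ref{cor:intphif} would already produce intervals $I_j$ and a Nil-Bohr set $\Lambda$ with $\Lambda\cap I_j\subseteq A$). The $\SH_d^*$ hypothesis is used exactly once, combinatorially, at the very end: positivity at every stage permits choosing $p_1,p_2,\dots$ inductively with $\SH_d(P)\subseteq B$ for $P=(p_j)$, i.e.\ $A\cap\SH_d(P)=\emptyset$, the desired contradiction. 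Note also that the positivity available concerns products of shifts of $f=\one_{\widetilde S}$ with $S=B$, i.e.\ returns to the \emph{complement} of $A$; positivity of such limits produces elements of $B$, never a return-time set inside $A$ --- it is the \emph{vanishing} of these integrals that gives structure inside $A$. So your step ``positivity of the limit yields $x_0$ and $U$ with $\{n\colon T^nx_0\in U\}\subseteq A$ at the relevant scales'' has no support as stated.

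A second, related point: the step you flag as the main obstacle --- matching the gap bound $<d$ to the order of the characteristic factor --- is not where that bound comes from. Proposition~\ref{prop:ouf} gives the same characteristic factor $Z_d$ for Theorem~\ref{th:sums} and Theorem~\ref{th:holes}; the gap condition arises from combinatorial bookkeeping inside the iteration. At step $j$ one passes (via Proposition~\ref{prop:positive2}, Lemma~\ref{lem:positive3}, and Proposition~\ref{prop:ouf} applied twice) from the family $h^{(j-1)}_\epsilon$ to $h^{(j)}_\epsilon=h^{(j-1)}_{0\epsilon_1\cdots\epsilon_{d-1}}\cdot T^{p_j}h^{(j-1)}_{1\epsilon_1\cdots\epsilon_{d-1}}$; these functions depend only on the position of the first nonzero digit of $\epsilon$ and so form a pipeline of depth exactly $d$, in which a copy of $f$ shifted by $p_j$ survives for $d$ further steps. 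This yields the recursion $E_j=\{0\}\cup(E_{j-1}+p_j)\cup\cdots\cup(E_{j-d}+p_{j-d+1})$ for the sets of shifts, and hence precisely the ``blocks of $0$'s of length $<d$'' condition. Without this mechanism, and without the contradiction (or equivalently dichotomy) structure described above, your sketch cannot be completed; repairing both points leads one directly to the paper's proof.
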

If $|P|=d+1$, then $\SH_d(P)=\Sumset(P)$ and thus 
Theorem~\ref{th:holes} generalizes Theorem~\ref{th:sums}.

In general, a $\Nil_d\Bohr_0$-set is not a $\Delta^*$-set.  
To construct an example, take an irrational $\alpha$ and let 
$\Omega$ be the set of $n\in\Z$ such that $n^2\alpha$ is 
close to $0\mod 1$.  Then $\Omega$ is a $\Nil_2\Bohr_0$-set, as can be checked by considering the transformation on $\T^2$ defined by 
$(x,y)\mapsto (x+\alpha, y+x)$.  
On the other hand, by induction we can build an increasing 
sequence $n_j$ of integers such that $n_j^2\alpha$ is 
close to $1/3\mod 1$, while $n_in_j\alpha\mod 1$ is close 
to $0$ for $i< j$.  Taking $S$ to be the set of such $n_j$,  
we have that $\Delta(S)$ does not intersect $\Omega$.

This leads to the following question:
\begin{question}
Is every  $\Nil_d\Bohr_0$-set an $\SH_d^*$-set?
\end{question}

As our characterizations of the sets $\SH_d$ and the class $\SH_d^*$ 
are complicated, we ask the following:
\begin{question}
Find an alternate description of the sets $\SH_d$ and of the 
class $\SH_d^*$.
\end{question}

\section{Preliminaries}

\subsection{Notation}
We introduce notation that we use throughout the remainder of the 
article.

If $X$ is a set and $d\geq 1$ is an integer, we write $X\type d = X^{2^d}$ and we index the 
$2^d$ copies of $X$ by $\{0,1\}^d$.  
Elements of $X\type d$ 
are written as 
$$
\bx = (x_\epsilon\colon\epsilon\in\{0,1\}^d) \ .
$$
We write elements of $\{0,1\}^d$ without commas or parentheses.  

We also often identify $\{0,1\}^d$ with the family $\CP([d])$ of 
subsets of $[d] = \{1, 2, \ldots, d\}$.  In this identification, 
$\epsilon_i = 1$ is the same as $i\in\epsilon$ and  $\emptyset = 
00\ldots 0$.

For $\epsilon\in\{0,1\}^d$ and $n\in\Z^d$, we write $|\epsilon| = \epsilon_1+\ldots+\epsilon_d$ 
and $\epsilon\cdot n = \epsilon_1n_1+\ldots+\epsilon_dn_d$.

If $p\colon X\to Y$ is a map, then we write $p\type d\colon X\type 
d\to Y\type d$ for the map $(p, p, \ldots, p)$ taken $2^d$ times.  In 
particular, if $T$ is a transformation on the space $X$, 
we define $T\type d\colon X\type d\to X\type d$ as $T\times T\times\ldots\times T$ taken $2^d$ times.  
We define the {\em face transformations }
$T_i\type d$ for $1\leq i\leq d$ by:
$$
(T_i\type d \bx)_\epsilon = 
\begin{cases}
T(x_\epsilon) & \text{ if } \epsilon_i=1 \\
x_\epsilon & \text{ otherwise \ .}
\end{cases}
$$

In a slight abuse of notation, we denote all transformations, even in 
different systems, by the letter $T$ (unless the system is naturally 
a Cartesian product).

For convenience, we assume that all functions are real 
valued.  

\subsection{Review of nilsystems}
\label{sec:nilsystem}

\begin{definition}
If $G$ is a $d$-step nilpotent Lie group and $\Gamma\subset G$ is a discrete and cocompact 
subgroup, the compact manifold $X = G/\Gamma$ is a {\em $d$-step nilmanifold}.  
The {\em Haar measure} $\mu$ of $X$ is the unique probability measure that is 
invariant under the action $x\mapsto g\cdot x$ of $G$ on $X$ by left translations.  

If $T$ denotes left translation on $X$ by a fixed element of $G$, then $(X, \mu, T)$ 
is a {\em $d$-step nilsystem}.  
\end{definition}

(We generally omit the $\sigma$-algebra from the notation, writing $(X, \mu, T)$ 
for a measure preserving system rather than $(X, \CB, \mu, T)$, where $\CB$ 
denotes the Borel $\sigma$-algebra.)

A $d$-step nilsystem is an example of a topological 
distal dynamical system.  For a $d$-step nilsystem, 
the following properties are equivalent: transitivity, minimality, 
unique ergodicity, and ergodicity.  (Note that the first 
three of these properties refer to the topological system, 
while the last refers to the measure preserving system.)
Also, the closed orbit of a point in a
$d$-step nilsystem is a $d$-step nilsystem, 
and it follows that this closed orbit is minimal and uniquely ergodic.
See~\cite{AGH} for proofs
and general references on nilsystems.

We also speak of a nilsystem $(X = G/\Gamma, T_1, \ldots, T_d)$, where $T_1, 
\ldots, T_d$ are translations by commuting elements of $G$.  
All the above properties hold for such systems. 

We also make use of inverse limits of systems, both in the 
topological and measure theoretic senses.  All inverse limits 
are implicitly assumed to be taken along sequences.  
Inverse limits for a sequence of nilsystems are the same in both the 
topological and measure theoretic senses: this follows because a 
measure theoretic factor map between two nilsystems is necessarily 
continuous.  

Many properties of the nilsystems also pass to the inverse limit.
In particular, in an inverse limit of $d$-step nilsystems, every 
closed orbit is minimal and uniquely ergodic.

\subsection{Structure Theorem}
Assume now that $(X, \mu, T)$ is an ergodic system.  

We recall a construction and definitions from~\cite{HK}, but for 
consistency we make some small changes in the notation.  
For an integer $d\geq 0$, 
a measure $\mu\type d$ on $X\type d$ was built in~\cite{HK}.  Here we 
denote this measure by $\mu\cube d$. 

The measure $\mu\cube d$ 
is invariant under $T\type d$ and under all the face 
transformations $T\type d_i$, $1\leq i\leq d$.  
Each of the projections of the measure $\mu\cube d$ on $X$ is equal 
to the measure $\mu$.  

If $f$ is a bounded measurable function on $X$, then 
$$
\int \prod_{\epsilon\subset [d]}f(x_\epsilon)\,d\mu\cube d(\bx) \geq 
0 $$
and we define
$\nnorm f_d$ to be this expression raised to the power $1/2^d$.  
Then $\nnorm \cdot_d$ is a seminorm on $L^\infty(\mu)$.  A main 
result from~\cite{HK} is that this is a norm if and only if the 
system is an inverse limit of $(d-1)$-step nilsystems.  
More precisely, a summary of the Structure Theorem of~\cite{HK} is:
\begin{theorem}
Assume that $(X, \mu, T)$ is an ergodic system.  Then for each $d\geq 2$, there exist a 
system $(Z_d, \mu_d, T)$ and a factor map $\pi_d\colon X\to Z_d$ satisfying:
\begin{enumerate}
\item $(Z_d, \mu_d, T)$ is the inverse limit of a sequence of $(d-1)$-step nilsystems.
\item For each $f\in L^\infty(\mu)$, $\nnorm{f-\E(f\mid Z_d)\circ\pi_d}_d = 0$.  
\end{enumerate}
\end{theorem}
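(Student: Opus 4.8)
The plan is to prove the two assertions in turn, with the bulk of the work lying in the structural statement (1). Throughout I use the inductively defined measures $\mu\cube d$ on $X\type d$: starting from $\mu\cube 0 = \mu$, I would define $\mu\cube{d+1}$ on $X\type{d+1} = X\type d\times X\type d$ to be the relatively independent self-joining of two copies of $\mu\cube d$ over the $\sigma$-algebra $\CI\type d$ of $T\type d$-invariant sets. The first task is to record the basic properties of these measures: invariance under $T\type d$ and under all face transformations $T_i\type d$, symmetry under the permutations of $\{0,1\}^d$, and the fact that each one-dimensional projection equals $\mu$. Two analytic facts then follow. The nonnegativity $\int\prod_\epsilon f(x_\epsilon)\,d\mu\cube d(\bx)\ge 0$ comes from writing the integral, via the relatively independent joining, as the integral of the square of a conditional expectation; this makes $\nnorm\cdot_d$ well defined as the $2^d$-th root. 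Applying Cauchy--Schwarz successively in each of the $d$ directions yields the Gowers--Cauchy--Schwarz inequality $\bigl|\int\prod_\epsilon f_\epsilon(x_\epsilon)\,d\mu\cube d(\bx)\bigr|\le\prod_\epsilon\nnorm{f_\epsilon}_d$, which upgrades $\nnorm\cdot_d$ to a seminorm. Finally, unwinding the joining together with the ergodic theorem yields the inductive identity
$$
\nnorm f_{d+1}^{2^{d+1}}
= \lim_{N\to\infty}\frac1N\sum_{n=0}^{N-1}\nnorm{f\cdot f\circ T^n}_d^{2^d}\ ,
$$
whence the monotonicity $\nnorm\cdot_d\le\nnorm\cdot_{d+1}$.

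For assertion (2), I would construct $Z_d$ directly from the seminorm. Introduce the dual function $\CD_d f$, the conditional expectation of $\prod_{\epsilon\neq\emptyset}f(x_\epsilon)$ on the $\emptyset$-coordinate under $\mu\cube d$, so that $\langle f,\CD_d f\rangle = \nnorm f_d^{2^d}$. Let $\mathcal Z_d$ be the $\sigma$-algebra generated by all functions $\CD_d g$, $g\in L^\infty(\mu)$, and let $(Z_d,\mu_d,T)$ be the corresponding factor with factor map $\pi_d$. The Gowers--Cauchy--Schwarz inequality shows that $\nnorm f_d = 0$ forces $\langle f,\CD_d g\rangle = 0$ for every $g$, hence $\E(f\mid\mathcal Z_d)=0$; conversely, if $\E(f\mid\mathcal Z_d)=0$ then $\langle f,\CD_d f\rangle = 0$ since $\CD_d f$ is $\mathcal Z_d$-measurable, so $\nnorm f_d = 0$. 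Thus $\nnorm f_d = 0$ if and only if $\E(f\mid\mathcal Z_d)=0$, and applying this to $f - \E(f\mid Z_d)\circ\pi_d$, whose conditional expectation on $\mathcal Z_d$ vanishes, gives assertion (2) at once.

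The substance of the theorem is assertion (1), which I would prove by induction on $d$, the case $d=2$ (the Kronecker factor, an inverse limit of $1$-step nilsystems) being classical. The inductive identity shows that the seminorm $\nnorm\cdot_{d+1}$, and hence the factor $Z_{d+1}$, is governed by the behaviour of $\nnorm\cdot_d$ on the products $f\cdot f\circ T^n$; translating this into the language of joinings, one shows that $Z_{d+1}$ is an extension of $Z_d$ by a compact abelian group, i.e.\ an isometric extension given by a measurable cocycle $\rho$ over $Z_d$. The key structural input is that the cube structure of $\mu\cube{d+1}$ forces $\rho$ to satisfy a functional equation generalizing the Conze--Lesigne equation familiar from the first, $2$-step, case: suitably differenced, $\rho$ becomes a coboundary up to a constant. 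I would then invoke the analysis of such cocycles to conclude that, after passing to an appropriate group extension, the system is a $d$-step nilsystem, and assemble these over an exhausting sequence of separable subfactors to present $Z_{d+1}$ as an inverse limit of $d$-step nilsystems.

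The main obstacle is precisely this last step: showing that an abelian extension of an inverse limit of $(d-1)$-step nilsystems whose cocycle satisfies the order-$d$ equation is itself an inverse limit of $d$-step nilsystems. This is where the soft joinings-and-seminorms formalism must be converted into genuine nilpotent algebraic structure; it requires solving the cocycle equation, using Mackey's theory to realize the extension inside a nilpotent group, and a separability and inverse-limit reduction to pass from the abstract factor to honest finite-dimensional nilmanifolds. By contrast, the seminorm inequalities of the first paragraph and the construction of $Z_d$ in the second are comparatively routine once the measures $\mu\cube d$ are in hand.
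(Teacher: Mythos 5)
This theorem is not proved in the paper at all: it is the Structure Theorem of Host--Kra, quoted verbatim from~\cite{HK}, so the only ``proof'' the paper offers is a citation. Your proposal should therefore be measured against the actual proof in~\cite{HK}, and measured that way it is an accurate roadmap of exactly that argument: the inductive definition of $\mu\cube{d}$ as the relatively independent self-joining over the $T\type d$-invariant $\sigma$-algebra, the positivity and Cauchy--Schwarz--Gowers inequalities, the inductive identity $\nnorm f_{d+1}^{2^{d+1}}=\lim_N \frac1N\sum_n\nnorm{f\cdot f\circ T^n}_d^{2^d}$, and the construction of $Z_d$ as the factor generated by dual functions are all correct and are precisely the tools used there. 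Your second paragraph, establishing that $\nnorm f_d=0$ if and only if $\E(f\mid \mathcal Z_d)=0$ and deducing assertion (2), is essentially complete (modulo checking that the dual functions generate a $T$-invariant $\sigma$-algebra, which follows from $\CD_d g\circ T=\CD_d(g\circ T)$).

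The genuine gap is assertion (1), and you have correctly located it yourself: everything after ``I would then invoke the analysis of such cocycles'' is not a proof but a promissory note, and it is where essentially all of the content of the theorem lives. Two distinct pieces of hard work are compressed into that sentence. First, showing that $Z_{d+1}$ is a compact abelian group extension of $Z_d$ at all is already a substantial theorem (it requires the Furstenberg--Zimmer theory of isometric extensions combined with the cube structure of $\mu\cube{d+1}$ to kill the non-abelian part of the relative Kronecker factor). Second, and much harder, is the passage from ``abelian extension whose cocycle satisfies the order-$d$ functional equation'' to ``inverse limit of $d$-step nilsystems'': this occupies the bulk of~\cite{HK} and requires constructing a group of unitary/measure-preserving transformations from the solutions of the functional equation, proving that this group is a nilpotent locally compact group acting transitively on the factor, and only then identifying the system with a nilmanifold via a separability and inverse-limit reduction. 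None of the seminorm formalism in your first two paragraphs forces this algebraic structure by itself; the conversion from the ``soft'' joining identities to genuine nilpotency is a separate and difficult argument, not a routine consequence. So the proposal takes the right route --- the same route as~\cite{HK} --- but as written it proves only assertion (2), together with the reduction of assertion (1) to the theorem's actual core, which remains unproven.
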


For each $d\geq 1$, we call  $(Z_d,\mu_d,T)$ the {\em HK-factor of order $d$} 
of $(X,\mu,T)$. 
The factor map $\pi_d\colon X\to Z_d$ is measurable, and a priori 
has no reason to be continuous.
For $\ell\leq d$, $Z_\ell$ is a factor of $Z_d$, with a continuous factor map.

If $(X, \mu, T)$ is an ergodic inverse limit of $(d-1)$-step 
nilsystems, we define\footnote{We are forced to use different 
notation from that in~\cite{HK}, as otherwise the proliferation of 
indices would be uncontrollable.} $X\cube d$ to the closed orbit in $X\type d$ of 
a point $\bx_0=(x_0, \ldots, x_0)$ (for some arbitrary 
$x_0\in X$) under the transformations $T\type d$ and $T\type 
d_i$ for $1\leq i\leq d$.  The system $X\cube d$, endowed with these 
transformations, is minimal and uniquely ergodic.  Its unique 
invariant measure is exactly the measure $\mu\cube d$ described 
above.  When $(X, \mu, T)$ is a $(d-1)$-step nilsystem, then $X\cube 
d$ is a nilmanifold and $\mu\cube d$ is its Haar measure.  

\subsection{Furstenberg correspondence principle revisited}
\label{subsec:Furstenberg}
By $\ell^\infty(\Z)$, we mean the algebra of bounded real valued 
sequences indexed by $\Z$.
 
Let $\CA$ be a subalgebra of 
$\ell^\infty(\Z)$, containing the constants,
 invariant under the shift, 
closed and separable with respect to the norm $\norm\cdot_\infty$ of 
uniform convergence.   We refer to this simple as  ``an algebra.''
In applications, finitely many subsets  of $\Z$ are given and 
$\CA$ is the shift invariant algebra spanned by indicator 
functions of these subsets.  

Given an algebra, we associate various objects to it: 
a dynamical system, 
an ergodic measure on this system, a sequence of intervals, etc.  
We give a summary of these objects without proof, referring 
to~\cite{HK3} for more information.

\subsubsection{A system associated to $\CA$}
There exist a topological dynamical system $(X,T)$ and  a point $x_0\in X$ such that the map 
$$
 \phi\in\CC(X)\mapsto \bigl(\phi(T^nx_0)
\colon n\in\Z\bigr)\in\ell^\infty(\Z)
$$
is an  isometric isomorphism of algebras from $\CC(X)$ onto $\CA$.
(We use $\CC(X)$ to denote the collection of continuous functions on 
$X$.)

In particular, if $S$ is a subset of $\Z$ with $\one_S\in\CA$, then 
there exists a subset $\widetilde S$ of $X$ that is open and 
closed in $X$ such that 
\begin{equation}
\label{eq:StildeS}
 \text{for every }n\in\Z,\quad T^nx_0\in\widetilde S
\text{ if and only if }n\in S\ .
\end{equation}

\subsubsection{Some averages and some measures 
associated to $\CA$}
\label{subsec:intervals}
There also exist a sequence $\bI=(I_j\colon j\geq 1)$ of intervals of 
$\Z$, whose lengths tend to infinity, and an invariant ergodic 
probability measure $\mu$ on $X$ such that
\begin{equation}
\label{eq:intphi}
\text{for every }\phi\in\CC(X),\quad
\frac 1{|I_j|}\sum_{n\in I_j}\phi(T^nx_0)\to\int \phi\,d\mu\text{ as }
j\to+\infty\ .
\end{equation}

Given a subset $S$ of $\Z$, we can chose the intervals $I_j$ such that
$$
 \frac{|S\cap I_j|}{|I_j|}\to d^*(S)\text{ as }j\to+\infty\ ,
$$
where $d^*(S)$ denotes the upper Banach density of $S$.

In particular, we can assume that the 
intervals $I_j$ are contained in $\N$.  

\subsubsection{Notation}
In the sequel, when $a=(a_n\colon n\in\Z)$ is a bounded sequence,
we write
$$
 \lim\Av_{n,\bI} a_n=\lim_{j\to+\infty}\frac 1{|I_j|}
\sum_{n\in I_j} a_n
$$
if this limit exists, and set
$$
 \limsup \bigl| \Av_{n,\bI} a_n\bigr|
=\limsup_{j\to+\infty}\bigl|\frac 1{|I_j|}
\sum_{n\in I_j} a_n\bigr|\ .
$$
We omit the subscripts $n$ and/or $\bI$ if they are clear from the context.
\subsubsection{Averages and factors of order $k$}
\label{subsec:averagesZk}

Recall that $Z_k$ denotes the HK-factor of order $k$ of $(X, \mu, T)$ and 
that $\pi_k\colon X\to Z_k$ denotes the factor 
map. 

The sequence of intervals $\bI=(I_j\colon j\geq 1)$ can be chosen 
such that:
\begin{proposition}
\label{prop:intphif}
For every $k\geq 1$, there exists a point $e_k\in Z_k$
such that $\pi_{\ell,k}(e_k)=e_\ell$ for $\ell <k$ and such that
for every $\phi\in\CC(X)$ 
 and every $f\in\CC(Z_k)$, 
 $$
\lim \Av_{\bI}\phi(T^nx_0)f(T^ne_k)=\int \phi\cdot 
f\circ\pi_k\,d\mu=\int \E(\phi\mid Z_k)\,f\,d\mu_k\ .
$$
\end{proposition}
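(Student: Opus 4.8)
The plan is to read the displayed identity as a statement about equidistribution of a single orbit in the product system $X\times Z_k$. The second equality is automatic: since $f\circ\pi_k$ is $Z_k$-measurable and $\mu_k=(\pi_k)_*\mu$, we have $\int\phi\cdot f\circ\pi_k\,d\mu=\int\E(\phi\mid Z_k)\,f\,d\mu_k$ by the defining property of conditional expectation. Thus the whole content is the first equality, which says exactly that the orbit of the pair $(x_0,e_k)$ under $T\times T$ becomes equidistributed along $\bI$ for the \emph{graph joining} $\lambda:=(\id\times\pi_k)_*\mu$ on $X\times Z_k$; this $\lambda$ is $T\times T$-invariant because $\pi_k$ intertwines the two copies of $T$, and it is ergodic since $x\mapsto(x,\pi_k(x))$ is a measure isomorphism of $(X,\mu,T)$ onto it. So I must produce a point $e_k$ and refine $\bI$ so that $\frac1{|I_j|}\sum_{n\in I_j}\delta_{(T^nx_0,T^ne_k)}\to\lambda$ weakly.

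Next I would reduce to the case where $Z_k$ is replaced by a single $(k-1)$-step nilsystem. Since $Z_k$ is an inverse limit $\varprojlim_i W_i$ of nilsystems with continuous connecting maps, the pullbacks of $\CC(W_i)$ are dense in $\CC(Z_k)$; hence it is enough to prove the convergence for each $f=g\circ q_i$ factoring through a continuous quotient $q_i\colon Z_k\to W_i$, and then to diagonalize over $i$ using the separability of $\CC(X)$ and of each $\CC(W_i)$. Carrying this out simultaneously for all $i$ and all $k$ is what yields the compatibility $\pi_{\ell,k}(e_k)=e_\ell$: I choose the points coherently as the images in each $Z_k$ of a single point of $\varprojlim_k Z_k$, so the relations hold by construction.

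The heart of the matter is then: given the measurable factor map $p=q_i\circ\pi_k\colon X\to W$ onto a $(k-1)$-step nilsystem $W$, find $w\in W$ with $\frac1{|I_j|}\sum_{n\in I_j}\phi(T^nx_0)\,g(T^nw)\to\int\phi\cdot g\circ p\,d\mu$ along a refinement of $\bI$. Here I exploit two features of nilsystems. First, because every closed orbit in $W$ is uniquely ergodic, the $W$-marginal of any weak-$*$ limit $\Lambda$ of the orbit averages of $(x_0,w)$ is forced to be the Haar measure of $\overline{\{T^nw\}}$; choosing $w$ in the support of $p_*\mu$ (itself a minimal uniquely ergodic subnilsystem) makes this marginal equal $p_*\mu$, so that every such $\Lambda$ is a joining of $\mu$ with $p_*\mu$. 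Second, to pin down the correct lift I approximate the discontinuous $p$ from inside: by Lusin's theorem there are compact sets $K$ with $\mu(K)\to1$ on which $p$ is continuous, and a Tietze-type extension gives genuinely continuous surrogates agreeing with $p$ on $K$; feeding these into the genericity of $x_0$ along $\bI$ lets me select $w$ and refine $\bI$ so that the limit joining is concentrated on the closed graph of $p$ and integrates $\phi\otimes g$ to $\int\phi\cdot g\circ p\,d\mu$, i.e. $\Lambda=(\id\times p)_*\mu$.

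I expect this last identification — showing that the limit joining is the graph joining and not some other invariant measure supported on the \emph{closure} of the graph — to be the main obstacle, precisely because $\pi_k$ (hence $p$) is only measurable and its graph is not closed. The Lusin/Tietze approximation controls the frequency with which the orbit of $x_0$ leaves the good sets $K$, but converting this into equidistribution of the \emph{joint} orbit requires combining it with the unique ergodicity of $W$ and, ultimately, with the characterization of $Z_k$ as the factor on which the seminorms $\nnorm{\cdot}_k$ become norms (the Structure Theorem); this is what rules out stray joinings and forces the identification. Once this is achieved for each nilfactor, the diagonalization of the second paragraph assembles the coherent point $e_k$ and a single refined sequence $\bI$ that works for all $k$, $\phi$, and $f$ at once.
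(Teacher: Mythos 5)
Your reformulation is correct as far as it goes: the second equality is indeed just the defining property of conditional expectation together with $\mu_k=(\pi_k)_*\mu$, the first equality is indeed genericity of $(x_0,e_k)$ along $\bI$ for the graph joining $(\id\times\pi_k)_*\mu$, and unique ergodicity of $Z_k$ (or of its nilsystem factors) does force every weak$^*$ limit of the empirical measures of the orbit of $(x_0,z)$ along $\bI$ to be a joining of $\mu$ with $\mu_k$, for any $z$. But that is the easy half. The whole content of the proposition is the step you yourself flag at the end as ``the main obstacle'': producing a point $e_k$ and a refinement of $\bI$ for which the limit joining is the \emph{graph} joining, rather than one of the many other joinings of $\mu$ with $\mu_k$ (product, relatively independent over a common factor, etc.). For this step you offer a list of tools --- Lusin, Tietze, unique ergodicity, the Structure Theorem --- but no argument, so the proposal is not a proof.

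Moreover, the mechanism you sketch cannot work as described. The orbit $\{T^nx_0\colon n\in\Z\}$ is countable, hence typically $\mu$-null, while $\pi_k$ (and any $p=q_i\circ\pi_k$) is defined only $\mu$-almost everywhere; a continuous surrogate agreeing with $p$ on a Lusin compact $K$ with $\mu(K)>1-\epsilon$ therefore carries no information about anything ``along the orbit of $x_0$''. Worse, genericity of $x_0$ along $\bI$ is weak$^*$ convergence of the empirical measures, which yields lower bounds on visit frequencies only for \emph{open} sets; a Lusin set $K$ is compact and in general has empty interior, so the measures $\frac1{|I_j|}\sum_{n\in I_j}\delta_{T^nx_0}$ may assign $K$ mass zero for every $j$. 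Thus one cannot ``control the frequency with which the orbit of $x_0$ leaves $K$'', and concentration of the limit joining on the closure of the graph cannot be forced this way. Note also that the paper itself does not prove this proposition --- it is quoted from~\cite{HK3} --- and the proof there avoids joinings and pointwise approximation of $\pi_k$ entirely: $e_k$ is produced through the uniformity seminorms $\norm{\cdot}_{\bI,k}$ and the dual function/nilsequence machinery, with the Structure Theorem used quantitatively (a function $\phi$ with $\nnorm{\phi-\E(\phi\mid Z_k)\circ\pi_k}_{k+1}=0$ has negligible correlation along $\bI$ with $k$-step nilsequences), not as a black box invoked at the end. The softer issues in your sketch (coherence of the points across the inverse limit and across $k$, diagonalization over refinements) look repairable, but without an actual argument identifying the graph joining there is no proof.
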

This formula extends~\eqref{eq:intphi}.

The next  corollary is an example of the relation between integrals on the factors $Z_k$ and 
$\PW\Nil\Bohr$-sets. More precise results are proved and used in 
the sequel.
\begin{corollary}
\label{cor:intphif}
Let $S$ be a subset of $\Z$ such that $\one_S$ belongs to the algebra 
$\CA$ and let $\widetilde S$ be the corresponding subset of $X$. 
Let $f$ be a nonnegative continuous function on $Z_k$ with 
$f(e_k)>0$, where $e_k$ is as in Proposition~\ref{prop:intphif}. If
$$
 \int \one_{\widetilde S}(x)\cdot f\circ\pi_k(x)\,d\mu(x)=0
$$
then $\Z\setminus S$ is a $\PW\Nil_k\Bohr_0$-set.
\end{corollary}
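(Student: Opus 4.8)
The plan is to convert the vanishing of the integral into a statement about densities along the intervals $\bI$, to recognize the relevant return-time set as an honest $\Nil_k\Bohr_0$-set, and then to extract from the zero-density information long sub-intervals on which this nil-Bohr set avoids $S$. For the first step I would apply Proposition~\ref{prop:intphif} with $\phi=\one_{\widetilde S}$, which is legitimate since $\widetilde S$ is open and closed in $X$, so that $\one_{\widetilde S}\in\CC(X)$. Using~\eqref{eq:StildeS} in the form $\one_{\widetilde S}(T^nx_0)=\one_S(n)$, the hypothesis becomes
$$
\lim\Av_\bI \one_S(n)\,f(T^ne_k)=\int\one_{\widetilde S}\cdot f\circ\pi_k\,d\mu=0\ .
$$
Because $f$ is continuous with $f(e_k)>0$, there is an open set $V\ni e_k$ in $Z_k$ and a constant $c>0$ with $f\geq c$ on $V$. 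Every summand above is nonnegative and is at least $c$ whenever $n\in S$ and $T^ne_k\in V$, so I conclude that $S\cap\{n:T^ne_k\in V\}$ has density zero along $\bI$.

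Next I would produce a genuine nil-Bohr set inside the return-time set $\{n:T^ne_k\in V\}$. Since $Z_k$ is an inverse limit of $(k-1)$-step nilsystems, the neighborhood $V$ contains the preimage of an open set $W$ (containing the image of $e_k$) in one of the nilsystem factors $Z$ of $Z_k$. The closed orbit of the image of $e_k$ in $Z$ is a minimal, uniquely ergodic $(k-1)$-step nilsystem, and by Definition~\ref{def:nilBohr} the set of its return times to a small neighborhood of its base point contained in $W$ is a $\Nil_{k-1}\Bohr_0$-set; as a $(k-1)$-step nilpotent group is in particular $k$-step nilpotent, this is a $\Nil_k\Bohr_0$-set $\Lambda$. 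By construction $\Lambda\subseteq\{n:T^ne_k\in V\}$, so $S\cap\Lambda$ again has density zero along $\bI$.

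Finally I would upgrade this zero-density statement to the strongly piecewise conclusion. Given an arbitrary sequence $(J_\ell)$ of intervals with $|J_\ell|\to\infty$, I would arrange, using the freedom in the correspondence principle to select the averaging intervals, that each $I_j$ of $\bI$ lies inside some $J_{\ell(j)}$ with $|I_j|\to\infty$. Writing $B_j=S\cap\Lambda\cap I_j$, the complement $I_j\setminus B_j$ is a union of at most $|B_j|+1$ sub-intervals of total length $|I_j|-|B_j|$, so its longest piece $I_j'$ has length at least $(|I_j|-|B_j|)/(|B_j|+1)$, which tends to infinity because $|B_j|/|I_j|\to0$. Then $I_j'\subseteq J_{\ell(j)}$, $|I_j'|\to\infty$, and $\Lambda\cap I_j'\subseteq\Z\setminus S$, which is exactly what is required for $\Z\setminus S$ to be $\PW\Nil_k\Bohr_0$.

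The main obstacle I anticipate is this last maneuver. The measure $\mu$, the factor $Z_k$, and the point $e_k$ all depend on the sequence $\bI$ furnished by the correspondence, whereas the definition of strongly piecewise quantifies over every interval sequence $(J_\ell)$. Making the choice of $\bI$ subordinate to a prescribed $(J_\ell)$ while keeping the integral equal to zero — equivalently, guaranteeing that the density-zero information is available inside any prescribed family of long intervals — is the delicate point, and is where the sharper and more uniform estimates developed later in the paper are needed.
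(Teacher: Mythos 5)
Your proof is correct and follows essentially the same route as the paper's: apply Proposition~\ref{prop:intphif} to $\phi=\one_{\widetilde S}$, note that the return times of $e_k$ to a neighborhood where $f$ is bounded below (the paper simply takes $\Lambda=\{n\in\Z\colon f(T^ne_k)>f(e_k)/2\}$) form a $\Nil_k\Bohr_0$-set, deduce that $S\cap\Lambda$ has density zero along $\bI$, and extract long sub-intervals of the $I_j$ avoiding $S\cap\Lambda$. Your added details --- descending to a finite-stage nilsystem factor of $Z_k$ to certify the nil-Bohr property, and the pigeonhole extraction of long intervals --- only make explicit what the paper leaves implicit, and the caveat you flag at the end is resolved exactly as you suspect, by the subordination of $\bI$ to a prescribed sequence $(J_\ell)$ discussed in the paper immediately after the corollary.
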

\begin{proof}
Let $\Lambda=\{n\in\Z\colon f(T^ne_k)>f(e_k)/2\}$. Then $\Lambda$ is a 
$\Nil_k\Bohr_0$-set. 
By Proposition~\ref{prop:intphif} and definition~\eqref{eq:StildeS}
 of $\widetilde S$, 
the averages on $I_j$ of 
$1_{S}(n)f(T^ne_k)$ converge to zero.  Thus
$$
\lim_{j\to+\infty} \frac{|I_j\cap S\cap\Lambda|}{|I_j|} =0\ .
$$
Therefore, the subset $E=\bigcup_jI_j\setminus (S\cap\Lambda)$ contains 
arbitrarily long intervals $J_\ell$, $\ell\geq 1$. For every $\ell$,
$J_\ell\cap (\Z\setminus S)\supset J_\ell\cap\Lambda$.
\end{proof}

\subsubsection{}
It is easy to check that given
a sequence of intervals $(J_k\colon k\geq 1)$ whose lengths tend to 
infinity,  we can choose the intervals $(I_j\colon j\geq 1)$ satisfying all of the above 
properties, and such that each interval $I_j$ is a 
subinterval of some $J_k$. To see this, we first reduce to the case 
that the intervals $J_k$ are disjoint and separated by sufficiently 
large gaps. We set $S$ to be the union of these intervals. We have 
$d^*(S)=1$ and we can choose intervals $I'_j$  with $|S\cap 
I'_j|/|I'_j|\to 1$. For 
every $j\in\N$, there exists $k_j$ such that $|I'_j\cap J_{k_j}|/|I'_j|\to 1$ 
as $j\to+\infty$. We set $I_j=I'_j\cap J_{k_j}$ and the sequence 
$(I_j\colon j\geq 1)$ satisfies all the requested properties.

\subsection{Definition of the uniformity seminorms} 
\label{subsec:uniformity}
We recall definitions and results of~\cite{HK3} adapted to the 
present context.  We keep notation as in the previous sections; in 
particular, $Z_k$ and $e_k$ are as in Proposition~\ref{prop:intphif}.

Let $\bI$ 
be as in Section~\ref{subsec:Furstenberg} and let $\CB$  be the algebra spanned by $\CA$ 
and sequences of the form $(f(T^ne_k)\colon n\in\Z)$, where $f$ 
is a continuous function on $Z_k$ for some $k$.  
By Proposition~\ref{prop:intphif}, for every sequence $a = 
(a_n\colon n\in\Z)$ belonging to the algebra $\CB$, the limit $\lim\Av_{\bI, 
n}a_n$ exists.  

Given a sequence $a\in\CB$, for $h = (h_1, \ldots, h_d)\in\Z^d$, 
let
$$
c_\bh = \lim\Av_{\bI,n}\prod_{\epsilon\subset[d]}a_{n+\epsilon\cdot \bh} 
\ .
$$
Then 
$$
\lim_{H\to\infty}\frac{1}{H^d}\sum_{h_1, \ldots, h_d=0}^{H-1}c_\bh
$$
exists and is nonnegative.  We define $\norm a_{\bI, d}$ 
to be this limit raised to the power $1/2^d$.

\begin{proposition}
\label{prop:boundphif}
Let $(Z,T)$ be an inverse limit of  $k$-step nilsystems and 
$f$ be a continuous function on 
$Z$. 
Then for every $\delta>0$ there exists $C=C(\delta)>0$ such that
for every sequence $a=(a_n\colon n\in\Z)$ belonging to the algebra 
$\CB$ and for every $z\in Z$,
$$
\limsup \bigl|\Av_\bI a_n f(T^nz)\bigr|
\leq\delta\norm a_\infty+C\norm a_{\bI,k+1}\ .
$$
\end{proposition}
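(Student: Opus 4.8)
The plan is to prove this inequality by approximating $f$ on the inverse limit $Z$ of $k$-step nilsystems by a function that factors through a single $k$-step nilsystem, and then to estimate the average $\Av_\bI a_n f(T^n z)$ using the $(k+1)$-dimensional cube structure that defines the seminorm $\norm{\cdot}_{\bI,k+1}$. The key mechanism is a van der Corput type argument: averages of the form $\Av_\bI a_n f(T^nz)$ are controlled by averages of $\Av_\bI (a_n \overline{a_{n+h}})(f(T^nz)\overline{f(T^{n+h}z)})$ over shifts $h$, and iterating this $k+1$ times produces exactly the $2^{k+1}$-fold product over $\epsilon \subset [k+1]$ appearing in the definition of $c_\bh$ and hence of $\norm{a}_{\bI,k+1}$.

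First I would use the fact that $Z$ is an inverse limit of $k$-step nilsystems to reduce to the case where $f$ is continuous on a genuine $k$-step nilsystem: given $\delta>0$, approximate $f$ in the uniform norm by a function pulled back from one of the factor nilsystems, absorbing the approximation error into the $\delta\norm a_\infty$ term. On such a nilsystem the orbit $(T^nz)$ is an equidistributed nilsequence, and the crucial structural input is that the sequence $n\mapsto f(T^nz)$ is a $k$-step nilsequence, which interacts well with the $(k+1)$-st uniformity seminorm. I would then run the van der Corput inequality $k+1$ times in succession on the variable $n$, each application introducing one new difference parameter $h_i$ and passing a translate of $f$ through the transformation. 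After $k+1$ iterations the factor coming from $f$ collapses: the $(k+1)$-fold differencing of a $k$-step nilsequence is, up to controllable error, trivial, so the $f$-dependence disappears from the leading term and what remains is precisely the cube average defining $c_\bh$, whose average over $\bh$ gives $\norm a_{\bI,k+1}^{2^{k+1}}$.

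The main obstacle will be controlling the passage from $Z$ being merely an inverse limit of nilsystems to a concrete finite-step nilsystem on which the van der Corput differencing and equidistribution estimates can be carried out uniformly in the base point $z$. I expect the uniformity in $z\in Z$ to be delicate: one needs the constant $C=C(\delta)$ to be independent of $z$, which requires that the relevant nilsequence estimates hold uniformly over the whole (compact) orbit closure, and this is where the minimality and unique ergodicity of closed orbits in inverse limits of nilsystems, recorded earlier in the excerpt, become essential. A secondary technical point is bookkeeping the error terms through the $k+1$ successive van der Corput steps so that they accumulate into a single bound of the stated form $\delta\norm a_\infty + C\norm a_{\bI,k+1}$ rather than a messier expression; this is routine but must be organized carefully, for instance by choosing the approximation parameter and the number of shifts in the correct order relative to $\delta$.
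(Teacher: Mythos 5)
Your opening reduction (uniform approximation of $f$ by a function pulled back from a single $k$-step nilsystem, absorbing the error into $\delta\norm a_\infty$) is exactly the paper's first step. But the core of your argument rests on a claim that is false for $k\geq 2$: that ``the $(k+1)$-fold differencing of a $k$-step nilsequence is, up to controllable error, trivial.'' This is true for $k=1$, where $f(T^nz)$ is almost periodic and a $2$-fold multiplicative differencing of $e(n\alpha)$ is identically $1$, and more generally for pure polynomial phases of degree $k$; but a general $k$-step nilsequence does not trivialize after $k+1$ differencings. For instance, on the Heisenberg nilsystem the sequences that arise are generalized polynomial phases such as $e(n\alpha\lfloor n\beta\rfloor)$, and their third multiplicative derivative is a bounded sequence that still depends on $n$ in a nontrivial way (the bracket produces ``carry'' terms that survive differencing). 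Consequently, after your $k+1$ van der Corput steps you are left with
$$
\Av_{\bh}\ \Av_n \prod_{\epsilon\subset[k+1]} a_{n+\epsilon\cdot\bh}\ \prod_{\epsilon\subset[k+1]} f(T^{n+\epsilon\cdot\bh}z)\ ,
$$
and there is no way to remove the $f$-product: it is not constant, and if you simply bound it by $\norm f_\infty^{2^{k+1}}$ you must put an absolute value inside the $\bh$-average, obtaining $\Av_\bh |c_\bh|$ rather than the signed average $\Av_\bh c_\bh = \norm a_{\bI,k+1}^{2^{k+1}}$. These are genuinely different quantities: already for $k+1=2$ and $a_n=\cos(n^2\theta)$ one has $c_{h_1,h_2}\approx\tfrac18\cos(2h_1h_2\theta)$, so $\Av_\bh c_\bh\to 0$ while $\Av_\bh|c_\bh|$ stays bounded away from $0$. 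Since such $a$ (nilsequences of step $>k$, small in $\norm\cdot_{\bI,k+1}$) are precisely the sequences the proposition is designed to handle, the loss of the sign structure is fatal, not a bookkeeping issue.

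The mechanism that actually works, and that the paper invokes by citation to~\cite{HK3}, is not iterated differencing of $f$ but the \emph{dual function} property of nilsystems: by Proposition~5.6 of~\cite{HK3}, a smooth function on a $k$-step nilsystem is a dual function, which reflects the structural fact that in the cube space $Z\cube{k+1}$ of a system of order $k$ the vertex $x_{[k+1]}$ is a (continuous) function of the remaining $2^{k+1}-1$ vertices. Given this, the ``Modified Direct Theorem'' of~\cite{HK3} -- essentially a Gowers--Cauchy--Schwarz argument that transfers the whole cube average onto the sequence $a$ -- yields a constant $\nnorm f_k^*$ with $\limsup|\Av_\bI a_nf(T^nz)|\leq \nnorm f_k^*\cdot\norm a_{\bI,k+1}$, uniformly in $z$; the $\delta\norm a_\infty$ term comes only from the approximation step you already have. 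So to repair your proof you would need to replace the trivialization claim by this cube/dual-function input (or reprove it), at which point the van der Corput iteration is no longer the engine of the argument. The paper's proof also records one point your sketch does not touch: the results of~\cite{HK3} are stated for ergodic systems, and one must check that ergodicity is not actually used in those proofs.
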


\begin{proof}
By density, we can reduce to the case that $(Z,T)$ is a $k$-step 
nilsystem and that the function $f$ is smooth.

In this case, the result is contained in~\cite{HK3}
under the hypothesis that the 
system is ergodic.
Indeed, by Proposition~5.6 of this paper, $f$ is a ``dual function'' on $X$.
By the ``Modified Direct Theorem'' of Section~5.4 in~\cite{HK3}, there exists a 
constant $\nnorm f_k^*\geq 0$ with 
$$
 \limsup \bigl|\Av_\bI a_n f(T^nz)\bigr|\leq 
\nnorm f_k^*\cdot \norm a_{\bI,k+1}\ .
$$
In the proofs of~\cite{HK3} we can check that the 
hypothesis of ergodicity is not used.
\end{proof}

The next proposition was proved in~\cite{HK3} and follows from the 
Structure Theorem.
\begin{proposition}
\label{prop:approx}
Let $\phi$ be a continuous function on $X$ with $|\phi|\leq 1$,
$k\geq 1$ an integer, and 
$f$ a continuous function on $Z_k$ with $|f|\leq 1$. 
Then 
$$
\bigl\Vert\bigl(\phi(T^nx_0)-f(T^ne_k)\colon 
n\in\Z\bigr)\bigr\Vert_{\bI,k+1}
\leq 2 \bigl\Vert f-\E(\phi\mid Z_k)\bigr\Vert_{L^1(\mu_k)}^
{1/2^{k+1}}\ .
$$
\end{proposition}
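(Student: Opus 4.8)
The plan is to prove Proposition~\ref{prop:approx} by reducing the seminorm $\norm{\cdot}_{\bI,k+1}$ of the sequence $\psi_n := \phi(T^nx_0)-f(T^ne_k)$ to an $L^1(\mu_k)$ distance via the combinatorial structure of the seminorm. First I would name the comparison function $g := \E(\phi\mid Z_k)$ on $Z_k$, so that the right-hand side is $2\norm{f-g}_{L^1(\mu_k)}^{1/2^{k+1}}$. The key idea is that when we expand
$$
\norm{\psi}_{\bI,k+1}^{2^{k+1}} = \lim_{H\to\infty}\frac{1}{H^{k+1}}\sum_{\bh}\lim\Av_{\bI,n}\prod_{\epsilon\subset[k+1]}\psi_{n+\epsilon\cdot\bh}\ ,
$$
each average over $\bI$ can be computed by Proposition~\ref{prop:intphif}: the sequences $\phi(T^nx_0)$ and $f(T^ne_k)$ have joint limiting statistics governed by the measure $\mu$ together with the factor map $\pi_k$ and the point $e_k$. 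I expect that the combined limit identifies $\norm{\psi}_{\bI,k+1}^{2^{k+1}}$ with an integral of a product of $2^{k+1}$ factors over the cube measure $\mu_k\cube{k+1}$ on $Z_k\type{k+1}$, where each factor is either $g$ (coming from $\phi$, after conditioning onto $Z_k$, since $\pi_{k+1}$ factors through $\pi_k$) or $-f$ (coming from the $f(T^ne_k)$ term).

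Second, I would make this identification precise. The crucial structural point is that the seminorm $\norm{\cdot}_{\bI,k+1}$ at level $k+1$ sees only the factor $Z_k$: by the Structure Theorem, $\nnorm{\phi-g\circ\pi_k}_{k+1}=0$, and correspondingly the sequence $\phi(T^nx_0)$ and the sequence $g(T^ne_k)$ should be indistinguishable to $\norm{\cdot}_{\bI,k+1}$. Granting this, $\psi_n$ has the same $\norm{\cdot}_{\bI,k+1}$-seminorm as the purely $Z_k$-supported sequence $(g-f)(T^ne_k)$. The limiting statistics of $(g-f)(T^ne_k)$ over $\bI$ are, by the unique ergodicity built into Proposition~\ref{prop:intphif} and the construction of $\mu_k\cube{k+1}$ as the invariant measure of $Z_k\cube{k+1}$, given exactly by integration against $\mu_k\cube{k+1}$. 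Thus
$$
\norm{\psi}_{\bI,k+1}^{2^{k+1}} = \int \prod_{\epsilon\subset[k+1]}(g-f)(z_\epsilon)\,d\mu_k\cube{k+1}(\bz)\ .
$$

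Third, I would bound the cube integral. Writing $F := g-f$, a standard Cauchy–Schwarz / Gowers-type estimate gives $\bigl|\int\prod_\epsilon F(z_\epsilon)\,d\mu_k\cube{k+1}\bigr|\leq \nnorm{F}_{k+1}^{2^{k+1}}$ on the nilsystem side, but since I want an $L^1$ bound rather than a seminorm bound, I would instead use that $|f|,|g|\leq 1$ so $|F|\leq 2$, and peel off one factor: bound $2^{k+1}-1$ of the factors by their sup norm $2$ and integrate the remaining factor against the appropriate marginal, which is $\mu_k$. Since each one-dimensional projection of $\mu_k\cube{k+1}$ equals $\mu_k$, this yields
$$
\norm{\psi}_{\bI,k+1}^{2^{k+1}} \leq 2^{2^{k+1}-1}\int |g-f|\,d\mu_k = 2^{2^{k+1}-1}\norm{f-g}_{L^1(\mu_k)}\ .
$$
Taking the $2^{k+1}$-th root gives $\norm{\psi}_{\bI,k+1}\leq 2^{(2^{k+1}-1)/2^{k+1}}\norm{f-g}_{L^1(\mu_k)}^{1/2^{k+1}}\leq 2\norm{f-g}_{L^1(\mu_k)}^{1/2^{k+1}}$, which is the claimed inequality.

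The main obstacle I anticipate is the second step: rigorously justifying that $\norm{\cdot}_{\bI,k+1}$ of the cross sequence $\phi(T^nx_0)-g(T^ne_k)$ vanishes and that the limiting $(2^{k+1})$-fold correlations along $\bI$ are governed precisely by $\mu_k\cube{k+1}$. This requires extending Proposition~\ref{prop:intphif} from single averages $\Av_\bI\phi(T^nx_0)f(T^ne_k)$ to the $2^{k+1}$-fold cube averages that define the seminorm, and invoking the fact (from the Structure Theorem and from~\cite{HK3}) that $\pi_{k+1}$ factors through $\pi_k$ together with the defining property $\nnorm{\phi-\E(\phi\mid Z_k)\circ\pi_k}_{k+1}=0$. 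Verifying that the joint distribution of the shifted points $(T^{n+\epsilon\cdot\bh}e_k)$ converges to $\mu_k\cube{k+1}$ — equivalently, that $e_k$ generates the cube system — is where the real content lies, and it is precisely this identification that~\cite{HK3} supplies; everything after it is the routine Cauchy–Schwarz bookkeeping sketched above.
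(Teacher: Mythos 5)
Your overall strategy is in substance the argument the paper has in mind: the paper gives no self-contained proof of this proposition (it says only that it ``was proved in~\cite{HK3} and follows from the Structure Theorem''), and your outline --- reduce to the factor $Z_k$ via the Structure Theorem, identify the sequence seminorm with a cube integral over $\mu_k\cube{k+1}$, then bound that integral by peeling off a single factor against the marginal --- is exactly the proof that citation encodes. Your endgame is also correct as stated: each one-coordinate projection of $\mu_k\cube{k+1}$ is $\mu_k$, so with $F=\E(\phi\mid Z_k)-f$ the cube integral is at most $\norm{F}_\infty^{2^{k+1}-1}\norm{F}_{L^1(\mu_k)}\leq 2^{2^{k+1}-1}\norm{F}_{L^1(\mu_k)}$, and the $2^{k+1}$-th root gives the constant $2$.

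There is, however, one step that does not make sense as written and needs repair: the ``purely $Z_k$-supported sequence $(g-f)(T^ne_k)$'' with $g=\E(\phi\mid Z_k)$. The conditional expectation $g$ is only an element of $L^\infty(\mu_k)$, defined up to $\mu_k$-null sets; it has no well-defined values at the individual points $T^ne_k$, and unique ergodicity --- which is what lets you compute seminorms of sequences by integration against $\mu_k\cube{k+1}$ --- applies only to continuous functions. So you cannot literally replace $\phi(T^nx_0)$ by $g(T^ne_k)$ and then equidistribute that sequence. The clean fix is to stay at the level of functions rather than sequences: $\psi_n=\phi(T^nx_0)-f(T^ne_k)$ is the evaluation, along the orbit of the point $(x_0,e_k)$, of the genuinely continuous function $\Psi(x,z)=\phi(x)-f(z)$ on the product system $X\times Z_k$; Proposition~\ref{prop:intphif} says that this orbit equidistributes along $\bI$ with respect to the graph joining of $\mu$ and $\mu_k$, a system isomorphic to $(X,\mu,T)$ itself; and the machinery of~\cite{HK3} then identifies $\norm{\psi}_{\bI,k+1}$ with $\nnorm{\phi-f\circ\pi_k}_{\mu,k+1}$. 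From there the Structure Theorem and the triangle inequality for the seminorm give $\nnorm{\phi-f\circ\pi_k}_{k+1}=\nnorm{(\E(\phi\mid Z_k)-f)\circ\pi_k}_{k+1}=\nnorm{\E(\phi\mid Z_k)-f}_{\mu_k,k+1}$, and your final estimate applies verbatim. This is a repair of your middle step rather than a different proof, but without it the argument has a real hole, since both the vanishing claim and the equidistribution claim are being applied to an object that does not exist pointwise.
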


\section{Some measures associated to inverse limits of nilsystems}
\label{sec:four}
\subsection{Standing assumptions}
We assume that 
every topological system $(Z,T)$ is implicitly 
endowed with a particular point, called the \emph{base point}.
Every topological factor map is implicitly assumed to map = base 
point to base point. For every $k\geq 1$, we take the base point 
of $Z_k$ to be the point $e_k$ 
introduced in Section~\ref{subsec:averagesZk}.  

If $(Z,T)$ is a nilsystem with $Z=G/\Gamma$, then by changing the 
group $\Gamma$ if needed,  we can assume that the base 
point of $Z$ is the image in $Z$ of the unit element of $G$.

\subsection{The measures $\mu\cube m_e$}
\begin{proposition}
\label{prop:mesuresmua}
Let $(X,\mu,T)$ be an ergodic inverse limit of ergodic $k$-step 
nilsystems, endowed with the base point $e\in X$, and let $m\geq 1$ be an integer.

\noindent\emph{a)} The closed orbit of the point 
$e\type{m}=(e, e, \ldots, e)$ of $X\cube{m}$ under the transformations $T\type{m}_i$, 
$1\leq i\leq m$, is
$$
X\cube{m}_e=\{\bx\in X\cube{m}\colon x_\emptyset=e\}\ .
$$
\noindent\emph{b)} Let   $\mu\cube{m}_e$ be the unique measure on this set 
invariant under 
these transformations.  Then the image
of $\mu\cube{m}_e$ under each of the natural projections $\bx\mapsto 
x_\epsilon\colon X\type{m}\to X$, 
$\emptyset\neq\epsilon\subset[d]$, is equal to $\mu$.

\noindent\emph{c)} Let $(Y, \nu,T)$ be an inverse limit of $k$-step 
nilsystems and  let
$p\colon X\to Y$ be a  factor map. Then
$\nu\cube{m}_e$ is the image of $\mu\cube{m}_e$ under 
$p\type{m}\colon X\type{m}\to Y\type{m}$. 

\noindent\emph{d)} Let $(Y, \nu,T)$ be the $(m-1)$-step factor of $X$ and
$p\colon X\to Y$ be the factor map.  
Then the measure 
$\mu\cube{m}_e$ is relatively independent with respect to 
$\nu\cube{m}_e$, meaning that when $f_\epsilon$, 
$\emptyset\neq\epsilon\subset[d]$, are $2^{m}-1$ bounded measurable 
functions on $X$,
$$
 \int\prod_{\emptyset\neq\epsilon\subset[d]}f_\epsilon(x_\epsilon)
\,d\mu\cube{m}_e(\bx)
= \int\prod_{\emptyset\neq\epsilon\subset[d]}\E(f_\epsilon\mid Y)(y_\epsilon)
\,d\nu\cube{m}_e(\by)\ .
$$
(The existence of these integrals follows from \emph{b)}.)
\end{proposition}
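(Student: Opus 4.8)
The plan is to establish a) first and then obtain b), c), and d) from it, since a) carries all of the dynamical content while the other three parts become essentially formal once the fiber is known to be minimal and uniquely ergodic.

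\emph{Part a).} First I would observe that the system $(X\cube m, T_1\type m, \dots, T_m\type m)$ is an inverse limit of nilsystems: when $(X,T)$ is a $k$-step nilsystem, $X\cube m$ is a nilmanifold and the face transformations are translations by commuting elements, and this structure passes to the inverse limit. By the principle recalled in the excerpt (every closed orbit in an inverse limit of nilsystems is minimal and uniquely ergodic), the closed orbit $X\cube m_e$ of $e\type m$ under the face transformations is minimal and uniquely ergodic; this also produces the measure $\mu\cube m_e$ needed in b). It then remains to identify $X\cube m_e$ with $\{\bx\in X\cube m\colon x_\emptyset=e\}$. The inclusion $\subseteq$ is immediate: since $\emptyset_i=0$ for every $i$, each face transformation fixes the coordinate indexed by $\emptyset$, so this coordinate stays equal to $e$ along the orbit of $e\type m$, and $\{x_\emptyset=e\}$ is closed. \textbf{The reverse inclusion is the main obstacle of the whole proposition.} I would deduce it from the minimality of $X\cube m$ under the full group generated by $T\type m$ and the face transformations, together with the algebraic fact that $T\type m$ commutes with every face transformation: because $T\type m$ moves the $\emptyset$-coordinate by $T$ while commuting with (hence permuting the orbits of) the face transformations, and because the $T$-orbit of $e$ is dense in $X$, every point of $X\cube m$ whose $\emptyset$-coordinate equals $e$ lies in the face-orbit closure of $e\type m$. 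Making this limiting argument precise is where I expect the real work to be; it can alternatively be settled by realizing the fiber as a homogeneous space of the face subgroup of the Host--Kra cube group $\wh G\cube m$.

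\emph{Parts b) and c).} These are formal consequences of a). For b), fix $\emptyset\neq\epsilon\subset[m]$ and choose $i$ with $\epsilon_i=1$; the coordinate projection $\pi_\epsilon\colon X\cube m_e\to X$ intertwines $T_i\type m$ with $T$ (since $T_i\type m$ acts as $T$ on the $\epsilon$-coordinate), so $(\pi_\epsilon)_*\mu\cube m_e$ is a $T$-invariant probability measure on $X$, hence equals $\mu$ by unique ergodicity of $X$. For c), the map $p\type m$ intertwines each face transformation on $X\type m$ with the corresponding one on $Y\type m$ (because $p\circ T=T\circ p$) and, using a) for both $X$ and $Y$, carries $X\cube m_e$ onto $Y\cube m_e$, the base point of $Y$ being $p(e)$ by the standing convention; therefore $(p\type m)_*\mu\cube m_e$ is a face-invariant probability measure on the minimal uniquely ergodic system $Y\cube m_e$ and so equals $\nu\cube m_e$.

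\emph{Part d).} Using c), I would first rewrite the right-hand side as $\int\prod_{\emptyset\neq\epsilon}(\E(f_\epsilon\mid Y)\circ p)(x_\epsilon)\,d\mu\cube m_e(\bx)$; writing $g_\epsilon=\E(f_\epsilon\mid Y)\circ p$, the assertion becomes $\int\prod f_\epsilon\,d\mu\cube m_e=\int\prod g_\epsilon\,d\mu\cube m_e$. Expanding the product and replacing one factor at a time (each $f_\epsilon-g_\epsilon$ satisfies $\E(f_\epsilon-g_\epsilon\mid Y)=0$), the claim reduces to showing that $\int\prod_{\emptyset\neq\epsilon}f_\epsilon\,d\mu\cube m_e=0$ whenever some factor $f_{\epsilon_0}$ has $\E(f_{\epsilon_0}\mid Y)=0$. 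Since $Y$ is the $(m-1)$-step factor, it is the HK-factor $Z_m$, so this condition is equivalent to $\nnorm{f_{\epsilon_0}}_m=0$. To obtain the vanishing I would pass to the full cube: disintegrating $\mu\cube m$ over its $\emptyset$-coordinate gives $\mu\cube m=\int_X\mu\cube m_x\,d\mu(x)$, so for every continuous $\phi$ on $X$,
$$
\int\phi(x_\emptyset)\prod_{\emptyset\neq\epsilon}f_\epsilon(x_\epsilon)\,d\mu\cube m(\bx)=\int_X\phi(x)\Bigl(\int\prod_{\emptyset\neq\epsilon}f_\epsilon\,d\mu\cube m_x\Bigr)\,d\mu(x)\ .
$$
By the relative independence of $\mu\cube m$ over its factor $Z_m=Y$ (a standard consequence of the Structure Theorem: the integral $\int\prod_{\epsilon\subset[m]}F_\epsilon\,d\mu\cube m$ is unchanged when each $F_\epsilon$ is replaced by $\E(F_\epsilon\mid Y)$), the left-hand side vanishes for every $\phi$ as soon as $\E(f_{\epsilon_0}\mid Y)=0$. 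Hence the function $x\mapsto\int\prod_{\emptyset\neq\epsilon}f_\epsilon\,d\mu\cube m_x$ is orthogonal to every $\phi$ and so vanishes $\mu$-almost everywhere; being continuous (the measures $\mu\cube m_x$ form a continuously varying family) and $\mu$ having full support, it vanishes everywhere, in particular at $x=e$. \textbf{The delicate point in d) is this passage from an almost-everywhere statement to the value at the single point $e$}; alternatively, one can argue directly by representing $\int\prod_{\emptyset\neq\epsilon}f_\epsilon\,d\mu\cube m_e$ as the limit of the multiple average $\frac1{H^m}\sum_{\bh\in\{0,\dots,H-1\}^m}\prod_{\emptyset\neq\epsilon}f_\epsilon(T^{\epsilon\cdot\bh}e)$ and bounding it by a Cauchy--Schwarz--Gowers inequality controlled by the seminorms $\nnorm{f_\epsilon}_m$.
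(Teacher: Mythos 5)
Your parts b) and c) are correct and coincide with the paper's (one-line) arguments, but part a) — which, as you yourself say, carries all the content — has a genuine gap, and your proposed argument for the reverse inclusion does not close it. From minimality of $X\cube{m}$ under the group generated by $T\type{m}$ and the $T\type{m}_i$, what you get is: any $\bx$ with $x_\emptyset=e$ is a limit of points $(T\type{m})^{n_j}\by_j$ with $\by_j$ in the face-orbit of $e\type{m}$ and $T^{n_j}e\to e$; equivalently, $\bx$ lies in the limit superior of the sets $K(T^{n_j}e)$, where $K(x)$ denotes the closed face-orbit of $x\type{m}$. To conclude that $\bx\in K(e)$ you need the map $x\mapsto K(x)$ to be upper semicontinuous at $e$, and this is not a soft consequence of commutation and distality: orbit-closure maps in distal systems are in general only lower semicontinuous and do jump (for instance, for $S\times S$ with $S$ the skew product $(x,y)\mapsto(x+\alpha,y+x)$ on $\T^2$, the orbit closure of a pair of points collapses in the limit as the difference of the base coordinates degenerates from irrational to rational). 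In the present setting the continuity of $x\mapsto K(x)$ is essentially equivalent to the statement being proved, so the sketch is circular. The paper closes exactly this gap by algebra: using the presentation of the cube group by upper faces, $\bg=g_1^{F_1}g_2^{F_2}\cdots g_{2^m}^{F_{2^m}}$ with $g_i\in G_{|\alpha_i|}$, it shows first that $G\cube{m}_e=\{\bg\colon g_\emptyset=1\}$ acts \emph{transitively} on $\{x_\emptyset=e\}$ (so the fiber is a nilmanifold $G\cube{m}_e/\Gamma\cube{m}_e$), and second that the face translations act \emph{ergodically} on it, by checking ergodicity of the induced action on the abelianization $G\cube{m}_e/(G\cube{m}_e)_2\Gamma\cube{m}_e$ and invoking Leibman's criterion; ergodicity gives minimality, hence the fiber is the closed orbit. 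Your fallback ("realize the fiber as a homogeneous space of the face subgroup") names this approach, but neither of its two nontrivial steps — transitivity and ergodicity — appears in your proposal.

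Your part d) takes a genuinely different route from the paper (telescoping, then disintegration of $\mu\cube{m}$ over $x_\emptyset$ plus relative independence of the full cube measure over $Z_m$), and it could in principle be made to work, but as written it leans on two things you do not prove: the weak continuity of $x\mapsto\mu\cube{m}_x$ (needed for the almost-everywhere-to-everywhere step; the paper proves the analogous continuity for the family $\mu\cube{m}_{e,x}$ only via the group-theoretic formula $\mu\cube{m}_{e,x}=g^F\cdot\mu\cube{m}_{\bbullet}$, so this is again nilpotent-structure input, not a formality), and part a) applied at every base point. Note also that your continuity claim survives for bounded \emph{measurable} $f_\epsilon$ only after an $L^1$-approximation using the fact that the non-$\emptyset$ projections of every $\mu\cube{m}_x$ equal $\mu$ — a point worth making explicit since the proposition is stated for measurable functions. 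The paper's own proof of d) is much shorter: for a nilsystem, $w^{\{\epsilon\}}\in G\cube{m}_e$ for every $w\in G_m$ and $\epsilon\neq\emptyset$, so the Haar measure of the fiber is invariant under moving any single non-$\emptyset$ coordinate by $G_m$, which is precisely relative independence over $Y=G/\Gamma G_m$; the inverse-limit case then follows by an approximation along an increasing sequence of nilsystems, a step your proposal (which works only with the limit system) also skips.
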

Following our convention, we assume in c) and d) that $Y$ is endowed 
with a base point and that $p$ maps the base point to the base point. 

\begin{proof}
We first prove a) and d) assuming that $X$ 
is a nilsystem.  
(While the proof is contained in~\cite{HK3}, we sketch it here in order 
to introduce some objects and some notation.)

For $g\in G$ and $F\subset\CP([m])$, we write $g^F$ 
for the element of $G\type m$ given by
$$
 \text{for every }\epsilon\subset[m],\quad
\bigl(g^F)_\epsilon=\begin{cases}
g &\text{if }\epsilon\in F\ ;\\
1 &\text{otherwise.}
\end{cases}
$$

We write $X=G/\Gamma$ and let $\tau$ be the element of $G$ defining the 
transformation $T$ of $X$. 
We can assume  that the base point $e$ of $X$ is the image in $X$ of the 
unit element of $G$.
Since $(X,\mu,T)$ is ergodic, we can also assume that $G$ is spanned by the
connected component of the identity and $\tau$. 
We recall a convenient presentation of $G\cube m$ (see~\cite{HK3} 
and~\cite{GT}).

Let $\alpha_1,\dots,\alpha_{2^m}$ be an enumeration of all 
subsets of $[d]$ such that $|\alpha_i|$ is increasing. 
In particular, $\alpha_1=\emptyset$. 
For $1\leq i\leq 2^m$, let 
$F_i=\{\epsilon\colon \alpha_i\subset\epsilon\subset[m]\}$. 
For every $i$,  $F_i$ is an \emph{upper face} of the cube $\CP([m])$, 
meaning a face containing the vertex $[d]$; its \emph{codimension} is
$|\alpha_i|$. Then 
$F_1,\dots,F_{2^m}$ is an enumeration of all the upper faces, in 
decreasing order of codimension.
In particular, $F_1$ is the whole cube $\CP([m])$.

Each element of $G\cube m$ can be written in a unique 
way as
\begin{equation}
\label{eq:deffacegroup}
 \bh=g_1^{F_1}g_2^{F_2}\dots g_{2^d}^{F_{2^d}}\ ,\text{ where }g_i\in 
G_{|\alpha_i|}\text{ for every }i\ .
\end{equation}
(By convention, $G_0=G$.)

We define
$$
G\cube m_e =\bigl\{\bg\in G\cube m\colon g_\emptyset=1\bigr\}\  .
$$
This  group is closed and normal in $G\cube m$ and every element of 
$G\cube m$ can be written in a unique way as $h\type m\bg$ with 
$h\in G$ and $\bg\in G\cube m_e$. 
Moreover,  $G\cube m_e$ is the set of elements of $G\cube m$ that are
 written as in~\eqref{eq:deffacegroup} 
 with $g_{1}=1$. From this, it is easy to deduce that the commutator 
subgroup of this group is equal to $G\cube m_e\cap (G_2)\type m$.

Clearly, the subset $X\cube m_e$ of $X\cube m$ is invariant 
under $G\cube m_e$ and it follows from the preceding 
description
that the action of this group on this set is transitive.
Therefore, the subgroup
$$
 \Gamma\cube m_e :=\Gamma\cube m\cap G\cube m_e
$$
of $G\cube m_e$ is cocompact in $G\cube m_e$ and we can 
identify $X\cube m_e=G\cube m_e/\Gamma\cube m_e$.

It is easy to check that $G\cube m_e$ is spanned by the 
connected component of its identity and the elements $\tau\type m_i$, 
$1\leq i\leq m$. Moreover, by using the above description of $G\cube 
m_e$, it is not difficult to check that the action induced by 
$T\type m_i$, $1\leq i\leq m$ on the compact abelian group 
$G\cube m_e/(G\cube m_e)_2\Gamma\cube m_e$ is 
ergodic. By a classical criteria~\cite{leibman}, the action of the transformations 
$T\type m_i$ on $X\cube m_e$ is ergodic and thus minimal.
In particular, $X\cube m_e$ is the closed orbit of the point 
$e\type m$ under these transformations. This proves a).

We now prove d). 
The $(m-1)$-step nilfactor $(Y,\nu,T)$ of $(X=G/\Gamma,\mu,T)$ is 
$Y=G/\Gamma G_m$ endowed with its Haar measure.

For every $\epsilon\subset[m]$ with 
$\epsilon\neq\emptyset$ and every $w\in G_m$, we have $w^{\{\epsilon\} 
}\in G\cube m_e$ and thus the Haar measure $\mu\cube m_e$ 
of $X\cube m_e$ is invariant under translation by this element. 
The result follows.

We now turn to the proof of the proposition in the general case.  
For a), the generalization to inverse limits is immediate.

\noindent b) Let $\epsilon\in[d]$ with $\epsilon\neq\emptyset$. 
Let $i\in\epsilon$. Then for every $\bx\in X\cube{m}$ we have 
$Tx_\epsilon=(T\type{m}_i\bx)_\epsilon$. Since the measure 
$\mu\cube{m}_e$ is invariant under $T\cube{m}_i$, its image under 
the projection $\bx\mapsto x_\epsilon$ is invariant under $T$ and 
thus is equal to $\mu$.

Property c) is immediate. 

\noindent d) Let the functions $f_\epsilon$ be as in the 
statement; without loss we can assume that $|f_\epsilon|\leq 1$ for 
every $\epsilon$.

Let $(X_i,\mu_i,T_i)$, $i\geq 1$, be an increasing sequence of  $k$-step 
nilsystems with inverse limit $(X,\mu,T)$ and let $\pi_i\colon X\to 
X_i$, $i\geq 1$, be the (pointed) factor maps.

For every $\epsilon$,  $\emptyset\neq\epsilon\subset[d]$, we have that
$$
 \bigl\Vert f_\epsilon-\E(f_\epsilon\circ 
X_i)\circ\pi_i\bigr\Vert_{L^1(\mu)}\to 0 \text{ as }i\to+\infty
$$
and thus
\begin{equation}
\label{eq:XiX}
 \int\prod_{\emptyset\neq\epsilon\subset[d]}
\E(f_\epsilon\mid X_i)\circ\pi_i (x_\epsilon)
\,d\mu\cube{m}_e(\bx)
\to
\int\prod_{\emptyset\neq\epsilon\subset[d]}f_\epsilon(x_\epsilon)
\,d\mu\cube{m}_e(\bx)
\end{equation}
 as $i\to+\infty$.

For every $i$, let  $(W_i,\sigma_i,T)$ be the $(m-1)$-step factor of 
$X_i$, $q_i\colon X_i\to W_i$ the 
factor map and $r_i=q_i\circ\pi_i$.

 We have showed above that, for every $i$, the measure ${\mu_i}\cube{m}_e$ is relatively 
independent with respect to ${\sigma_i}\cube{m}_e$.

 By using c) 
twice, we get that the second integral in~\eqref{eq:XiX}
is equal to
$$
 \int\prod_{\emptyset\neq\epsilon\subset[d]}
\E(f_\epsilon\mid W_i)\circ r_i (x_\epsilon)
\,d\mu\cube{m}_e(\bx)\ .
$$
As the systems $X_i$ form an increasing sequence, the systems 
$W_i$ also form an increasing sequence. Let $(W,\sigma,T)$ be the 
inverse limit if this sequence. This system is a factor of $X$, and 
writing $r\colon X\to W$ for the factor map, we have that
$\E(f_\epsilon\mid W_i)\circ r_i\to\E(f_\epsilon\mid W)\circ r$ in 
$L^1(\mu)$ for every 
$\epsilon$. We get
\begin{equation}
\label{eq:WX}
\int\prod_{\emptyset\neq\epsilon\subset[d]}f_\epsilon(x_\epsilon)
\,d\mu\cube{m}_e(\bx)=
 \int\prod_{\emptyset\neq\epsilon\subset[d]}\E(f_\epsilon\mid W)\circ 
r(x_\epsilon)
\,d\mu\cube{m}_e(\bx)\ .
\end{equation}
This means that the measure $\mu\cube{m}_e$ is relatively 
independent with respect to $\sigma\cube{m}_e$.

As $W$ is an inverse limit of $(m-1)$-step nilsystems and is a factor of 
$X$, it is a factor of the $(m-1)$-step factor $Y$ if $X$. If for some 
$\epsilon$ we have $\E(f_\epsilon\mid Y)=0$, then  we have 
$\E(f_\epsilon\mid  W)=0$ and the second integral in~\eqref{eq:WX} is 
equal to zero. The result follows.
\end{proof}

Passing to inverse limits adds technical issues to each proof.  These 
issues are not difficult and the passage to inverse limits uses only 
routine techniques, as in the preceding proof.  However, it does greatly increase the length of 
the arguments, and so in general we omit this portion of the argument.

\subsection{The measures $\mu\cube m_{e, x}$}\strut

In this section, again $(X,\mu,T)$ is an ergodic inverse 
limit of $k$-step nilsystems, with base point $e\in X$.

For $x\in X$ we write  
$$
 X\cube m_{e, x}=\{\bx\in X\cube m\colon x_\emptyset=e\text{ and }
x_{\{m\}}=x\}\ .
$$
The set $X\cube m_{\bbullet}$ is the image of the set $X\cube{m,1}_e$ introduced 
below
by a permutation of coordinates.

\begin{proposition}
\label{prop:muax}
For each $x\in X$, there exists a measure $\mu\cube m_{e, x}$, 
concentrated on $X\cube m_{e, x}$, such that
\begin{enumerate}
\item
The image of $\mu\cube m_{e, x}$ under each projection 
$\bx\mapsto x_\epsilon\colon X\type m\to X$, $\epsilon\neq\emptyset$, 
$\epsilon\neq\{m\}$,
 is equal to $\mu$.
\item
If $f_\epsilon$, $\epsilon\subset[m]$, $\epsilon\not\subset[1]$, are 
$2^m-2$ bounded measurable functions on $X$, then the function $F$ on 
$X$ given by 
$$
 F(x)=\int\prod_{\substack{\epsilon\subset[d] \\ 
\epsilon\neq\emptyset,\ \epsilon\neq\{m\} }}
f_\epsilon(x_\epsilon)\,d\mu_{e, x}\cube m(\bx)
$$
is continuous. 
\item Moreover, for every bounded measurable function $f$ on 
$X$,
$$
 \int f(x)F(x)\,d\mu(x)=\int f(x_{ \{m\} })\,
\prod_{\substack{\epsilon\subset[d] \\ 
\epsilon\neq\emptyset,[m]}}
f_\epsilon(x_\epsilon)\,d\mu\cube m_e(\bx)\ .
$$
\end{enumerate}
\end{proposition}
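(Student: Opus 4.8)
The plan is to obtain $\mu\cube m_{e,x}$ as the disintegration of the measure $\mu\cube m_e$ over its $\{m\}$-th coordinate. By Proposition~\ref{prop:mesuresmua} b) the image of $\mu\cube m_e$ under $\bx\mapsto x_{\{m\}}$ is $\mu$, so abstractly there is a $\mu$-almost everywhere defined family of probability measures, each carried by the fiber $X\cube m_{e,x}$, with $\mu\cube m_e=\int_X\mu\cube m_{e,x}\,d\mu(x)$. Once this family is known to be defined for every $x$, statement (3) is automatic: integrating the function $f(x_{\{m\}})\prod f_\epsilon(x_\epsilon)$ (the product over the $2^m-2$ coordinates $\epsilon\neq\emptyset,\{m\}$) against the disintegration and using that $x_{\{m\}}=x$ on $X\cube m_{e,x}$ rewrites the cube integral as $\int_X f(x)F(x)\,d\mu(x)$. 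As always I would first treat the case of a single nilsystem $(X,\mu,T)=(G/\Gamma,\mu,T)$, deferring the routine passage to inverse limits.

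For a nilsystem I would make the disintegration concrete. The projection $\bx\mapsto x_{\{m\}}$ is induced by the continuous surjective homomorphism $\rho\colon G\cube m_e\to G$, $\bg\mapsto g_{\{m\}}$, whose kernel is $G\cube m_{e,e}=\{\bg\in G\cube m_e\colon g_{\{m\}}=1\}$; the normal form~\eqref{eq:deffacegroup} for $G\cube m$ shows moreover that $\rho(\Gamma\cube m_e)=\Gamma$. Thus $\rho$ presents $X\cube m_e\to X$ as a homogeneous fibration whose central fiber is $X\cube m_{e,e}=G\cube m_{e,e}/\Gamma\cube m_{e,e}$ (a permuted copy of the space $X\cube{m,1}_e$), and the disintegration of Haar measure along such a fibration gives, for \emph{every} $x$, a fiber measure $\mu\cube m_{e,x}$ carried by $X\cube m_{e,x}$. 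Choosing the lift $\bg_x=g_x^{F}$ supported on the upper face $F=\{\epsilon\colon m\in\epsilon\}$, one has $\mu\cube m_{e,x}=(\bg_x)_*\mu\cube m_{e,e}$, well defined independently of the representative $g_x$ of $x$ because $\mu\cube m_{e,e}$ is $G\cube m_{e,e}$-invariant and $\rho(\Gamma\cube m_e)=\Gamma$. In particular $x$ acts by left translation only on the coordinates $\epsilon\ni m$.

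Property (1) I would check first on the central fiber. For each $i\neq m$ the face transformation $T\type m_i$ fixes the $\emptyset$-th and $\{m\}$-th coordinates, hence lies in $G\cube m_{e,e}$ and preserves $\mu\cube m_{e,e}$; given $\epsilon\neq\emptyset,\{m\}$ one may pick $i\in\epsilon$ with $i\neq m$, so the image of $\mu\cube m_{e,e}$ under $\bx\mapsto x_\epsilon$ is a $T$-invariant probability measure and therefore equals $\mu$ by unique ergodicity. For general $x$ the measure $\mu\cube m_{e,x}=(\bg_x)_*\mu\cube m_{e,e}$ differs from $\mu\cube m_{e,e}$ only by a left translation on the $\epsilon$-th coordinate, under which $\mu$ is invariant, so the $\epsilon$-th marginal is again $\mu$.

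The hard part is the continuity in (2), and it is genuinely stronger than weak continuity of $x\mapsto\mu\cube m_{e,x}$: the fibers $X\cube m_{e,x}$ move with $x$, yet $F$ must be continuous for merely \emph{bounded measurable} $f_\epsilon$. The mechanism is exactly the convolution that this section is designed to encode: since $x$ enters $F$ only through left translation by $g_x$ on the coordinates $\epsilon\ni m$, integrating out the remaining free directions of the fiber against $\mu\cube m_{e,e}$ is a convolution-type operation, and convolution against a homogeneous measure on a compact space sends $L^\infty$ to $\CC(X)$. To make this rigorous I would induct on the nilpotency step: the relative independence of $\mu\cube m_e$ over the $(m-1)$-step factor (Proposition~\ref{prop:mesuresmua} d)) reduces the claim to the top abelian extension of the tower, where the fiber integration is a literal convolution on a compact abelian group and the smoothing $L^2\star L^2\subset\CC(X)$ applies. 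Controlling this smoothing uniformly through the tower, and then through the inverse limit, is where the bulk of the work lies.
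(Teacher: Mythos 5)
Your construction of the measures and your treatment of (1) and (3) are essentially the paper's own proof: the paper also realizes $\mu\cube m_{e, x}$ as $g^F\cdot\mu\cube m_{\bbullet}$, where $\mu\cube m_{\bbullet}$ is the Haar measure of the nilmanifold $X\cube m_{\bbullet}=G\cube m_{\bbullet}/\Gamma\cube m_{\bbullet}$, proves (1) by the same face-transformation/unique-ergodicity argument, and proves (3) by showing that $\int\mu\cube m_{e, x}\,d\mu(x)$ is invariant under all the $T\type m_i$ and invoking unique ergodicity of $X\cube m_e$ --- which is the content of your appeal to disintegration along a homogeneous fibration. One detail you should spell out: in checking that $(g_x^F)_*\mu\cube m_{\bbullet}$ is independent of the representative $g_x$, the ambiguity is by $\gamma^F$ with $\gamma\in\Gamma$, and $\gamma^F$ does \emph{not} lie in the kernel $G\cube m_{\bbullet}$; you need that $\gamma^F$ normalizes the kernel (true, as the kernel is normal in $G\cube m_e$) \emph{and} preserves the fiber over $e$ (true, since $\gamma^F\cdot e\type m=e\type m$), and then uniqueness of the invariant measure for the transitive kernel action. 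Your cited reasons gesture at this but do not quite say it.

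The genuine gap is in (2). Your proposed mechanism --- induction on the nilpotency step via Proposition~\ref{prop:mesuresmua}~d) plus a convolution smoothing $L^2\star L^2\subset\CC(X)$ --- fails on two counts. First, relative independence is a statement about integrals against $\mu\cube m_e$, so it can only tell you that $F$ agrees $\mu$-almost everywhere with a function lifted from the $(m-1)$-step factor; agreement almost everywhere with a continuous function does not make the everywhere-defined function $F$ continuous, so this reduction cannot prove (2) unless you first prove a \emph{fiberwise} relative independence statement valid for every $x$, which is not available and is essentially as hard as (2) itself. Second, the smoothing principle as you state it is false: convolving an $L^\infty$ function against the Haar measure of a closed subgroup need not produce a continuous function (take the trivial subgroup, i.e.\ a Dirac mass, or average the indicator of a single coset of a positive-dimensional subgroup). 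More to the point, none of this machinery is needed, because you have already proved the fact that makes (2) soft, namely property (1) for \emph{every} $x$. Given bounded measurable $f_\epsilon$ with $|f_\epsilon|\leq 1$, choose continuous $f'_\epsilon$ with $|f'_\epsilon|\leq 1$ and $\norm{f_\epsilon-f'_\epsilon}_{L^1(\mu)}<\delta$; telescoping the products and using that each relevant marginal of $\mu\cube m_{e, x}$ equals $\mu$ \emph{independently of $x$} gives $|F(x)-F'(x)|\leq (2^m-2)\delta$ for all $x$ simultaneously. Thus $F$ is a uniform limit of the functions $F'$, and for continuous integrands the continuity of $F'$ is exactly the weak continuity of $x\mapsto\mu\cube m_{e, x}$, which is immediate from your formula $\mu\cube m_{e, x}=(g_x^F)_*\mu\cube m_{\bbullet}$. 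This two-line argument is what the paper does; the step you singled out as the hard part is actually the easy one, given (1).
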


\begin{proof}
It suffices to prove this Proposition in the case that $(X,\mu,T)$ is 
$k$-step nilsystem, as the general case follows by standard methods.

We write $X=G/\Gamma$ as usual. We can assume that $e$ is the image 
in $X$ of the unit element $1$ of $G$. We define
$$
G\cube m_{\bbullet
}=
\bigl\{\bg\in G\cube m\colon g_\emptyset=g_{ \{m\} }=1\bigr\}\  .
$$
This group is closed and normal in $G$. 
It is the set of elements of $G\cube m$ that can
 be written 
as in~\eqref{eq:deffacegroup} with $g_\emptyset=1$
and $g_i=1$ for the value of $i$ such that $\alpha_i=\{m\}$.
Recall that $e\type m = (e, e, \ldots, e)$.  

It is easy to check that $G\cube m_{\bbullet
}\cdot e\type 
m=X\cube m_{\bbullet
}$. It follows that 
$$
 \Gamma\cube m_{\bbullet
}:=\Gamma\type m\cap G\cube m_{\bbullet
}
$$
is cocompact in $G\cube m_{\bbullet
}$ and that $X\cube
m_{\bbullet
}$ can be identified with the nilmanifold 
$G\cube m_{\bbullet
}/\Gamma\cube m_{\bbullet
}$.  
We write 
$\mu\cube m_{\bbullet
}$ for the Haar measure of this nilmanifold.

Let $F = \{\epsilon\subset[m]\colon m\in\epsilon\}$.  We recall that 
for $g\in G$, $g^F\in G\cube m$ is defined by 
$$
(g^F)_\epsilon = 
\begin{cases}
g & \text{ if } m\in\epsilon \\
1 & \text{ otherwise.}
\end{cases}
$$

By definition of the sets $X\cube m_{e, x}$,
 the image of $X\cube m_{\bbullet
}$ under translation by 
$g\type m_m$ is equal to $X\cube m_{e, g\cdot e}$. 
Since $G\cube m_{\bbullet
}$ is normal in $G\cube m$, 
the image of the measure $\mu_{\bbullet
}\cube m$ under 
$g^F$ is invariant under $G\cube m_{\bbullet
}$. 
Moreover, if $g,h\in G$ satisfy $g\cdot e=h\cdot e$, 
then we have that $g=h\gamma$ for some $\gamma\in\Gamma$. 
Since $\gamma^F\cdot e\type m=e\type m$ and by normality of 
$G\cube m_{\bbullet
}$ again, the measure $\mu\cube m_{\bbullet
}$ 
is invariant under $\gamma^F$ and thus the images of 
$\mu\cube m_{\bbullet
}$ under $g^F$ and $h^F$ 
are the same.

Therefore, for every $x\in X$ we can define a measure 
$\mu\cube m_{e, x}$ on $X\cube m_{e, x}$ by 
\begin{equation}
\label{eq:muaxg}
\mu\cube m_{e, x}=g^F\cdot \mu\cube m_{\bbullet
}\text{ for every 
}g\in G\text{ such that }g\cdot e =x\ .
\end{equation}
In particular, for every $h\in G$ and every $x\in X$,
\begin{equation}
\label{eq:muaxh}
\mu_{e, h\cdot x}\cube m=h^F\cdot \mu\cube m_{e, x}\ .
\end{equation}
If $T$ is the translation by $\tau\in G$, then $T\type m_m$ is the 
translation by $\tau^F$ and 
and so for every integer $n$,
\begin{equation}
\label{eq:maxTn}
\mu_{e, T^n x}\cube m={T\type m_m}^n\cdot \mu\cube m_{e, x}\ .
\end{equation}
For $1\leq i < m$, $\tau\type m_i\in G\cube m_{\bbullet
}$ 
and thus, for every $x\in X$,  $\mu\cube m_{e, x}$ is invariant 
under $T\type m_i$.
As above, it follows that this measure satisfies the first property of 
the proposition.

To prove the other properties, the first statement of the 
proposition implies that we can reduce to the case that the 
functions $f_\epsilon$ are continuous. By~\eqref{eq:muaxg}, the map 
$x\mapsto \mu\cube m_{e, x}$ is weakly continuous  and the function $F$ 
is continuous. We are left with showing that 
$$
 \mu\cube m_e =\int\mu\cube m_{e, x}\, d\mu(x)\ .
$$
For $1\leq i < m$, since for every $x$ the measure 
$\mu\cube m_{e, x}$ 
is invariant under $T_i\type m$, the measure defined by this integral 
is invariant under this transformation. 
By~\eqref{eq:muaxh}, 
$\mu\cube m_{e, Tx}=T\type m_m\cdot\mu\cube m_{e, x}$ 
for every $x$ and it follows that the measure defined by the 
above integral is invariant under $T\type m_m$.  
Since it is concentrated on $X\cube m_e$, it is equal to the 
Haar measure $\mu\cube m_e$ of this nilmanifold (recall that 
$(X\cube m_e,T\type m_1,\dots,T\type m_m)$ is uniquely ergodic).
\end{proof}

\subsection{A positivity result}
In this section, again $(X,\mu,T)$ is an ergodic inverse limit of $k$-step nilsystems, 
with base point $e\in X$. 

In the next proposition, the notation 
$\epsilon=\epsilon_1\dots\epsilon_m\in\{0,1\}^m$ is more convenient 
that $\epsilon\subset[m]$. We recall that $00\dots 0\in\{0,1\}^m$ 
corresponds to $\emptyset\subset[m]$ and that $00\dots 01\in\{0,1\}^m$ corresponds to 
$\{m\}\subset[m]$. For $\epsilon \in\{0,1\}^{m+1}$, 
$\epsilon_1\dots\epsilon_m$ corresponds to $\epsilon\cap[m]$.
\begin{proposition}
\label{prop:positive2}
Let $f_\epsilon$, $\emptyset\neq\epsilon\in\{0,1\}^m$, be
 $2^m-1$  bounded measurable real functions on $X$.
Then 
\begin{multline*}
 \int\prod_{\substack{\epsilon\in\{0,1\}^{m+1}\\ 
\epsilon\neq 00\dots 0\\ \epsilon\neq 00\dots 01 }}
f_{\epsilon_1\dots\epsilon_m }(x_\epsilon)\,d\mu\cube{m+1}_\bbullet
(\bx)\\
\geq
\Bigl(\int\prod_{\substack{\epsilon\in\{0,1\}^m\\ 
\epsilon\neq 00\dots 0 }}
f_\epsilon(x_\epsilon)\,d\mu\cube m_e(\bx)\Bigr)^2\ .
\end{multline*}
\end{proposition}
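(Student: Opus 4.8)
The plan is to reduce the inequality to a single application of Cauchy--Schwarz, after recognizing $\mu\cube{m+1}_\bbullet$ as a relatively independent self-joining of $\mu\cube m_e$ with itself. First I would split the coordinates of the $(m+1)$-cube according to the last digit: writing $\epsilon=(\eta,\epsilon_{m+1})$ with $\eta\in\{0,1\}^m$, a point $\bx\in X\cube{m+1}_\bbullet$ has a \emph{lower face} $\bx'=(x_{(\eta,0)}\colon\eta\in\{0,1\}^m)$ and an \emph{upper face} $\bx''=(x_{(\eta,1)}\colon\eta\in\{0,1\}^m)$. The pinning $x_\emptyset=x_{\{m+1\}}=e$ says exactly that the $\emptyset$-vertex of each face equals $e$, so both faces lie in $X\cube m_e$. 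Setting
\[
\Phi(\by)=\prod_{\emptyset\neq\eta\in\{0,1\}^m}f_\eta(y_\eta)\qquad(\by\in X\cube m_e),
\]
the two excluded vertices $00\dots0$ and $00\dots01$ are precisely the two $\emptyset$-vertices, and the integrand on the left factors as $\Phi(\bx')\,\Phi(\bx'')$. Thus the asserted inequality reads
\[
\int\Phi(\bx')\,\Phi(\bx'')\,d\mu\cube{m+1}_\bbullet(\bx)\ \geq\ \Bigl(\int\Phi\,d\mu\cube m_e\Bigr)^2 .
\]

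As in Propositions~\ref{prop:mesuresmua} and~\ref{prop:muax}, I would reduce to the case that $X=G/\Gamma$ is a $k$-step nilsystem and defer the inverse limit to the end by the routine approximation used there. The heart of the argument is a description of $\mu\cube{m+1}_\bbullet$ through the presentation~\eqref{eq:deffacegroup}. Decomposing a generator $g^{F_\alpha}$ of $G\cube{m+1}$ into its lower- and upper-face parts, there are two cases. If $m+1\notin\alpha$ then $F_\alpha$ meets both halves symmetrically and the generator restricts to the \emph{same} upper-face element $g^{F_\alpha^{(m)}}$ of $G\cube m_e$ on each face, so these generators act diagonally. If $m+1\in\alpha$ then $F_\alpha$ lies entirely in the upper half, the lower-face part is trivial, and---since the pinning removes $\alpha=\{m+1\}$---one has $|\alpha|\geq 2$, hence $g\in G_{|\alpha|}\subseteq G_2$; these act on the upper face only, through the commutator subgroup $(G\cube m_e)_2=G\cube m_e\cap(G_2)\type m$. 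Consequently the two faces agree after projection to the maximal torus $\mathcal G:=X\cube m_e/(G\cube m_e)_2$, while the upper face is free to move within its $(G\cube m_e)_2$-coset independently of the lower one. Since $\mu\cube{m+1}_\bbullet$ is the Haar measure of $G\cube{m+1}_\bbullet/\Gamma\cube{m+1}_\bbullet$, it is invariant under left translation by the subgroup of $G\cube{m+1}_\bbullet$ supported on the upper coordinates; combined with the flip symmetry $\epsilon_{m+1}\mapsto 1-\epsilon_{m+1}$, which is a symmetry of the cube structure exchanging the two pinned vertices and hence preserves $\mu\cube{m+1}_\bbullet$, this yields a disintegration
\[
\mu\cube{m+1}_\bbullet=\int \lambda_\omega\otimes\lambda_\omega\,dP(\omega),
\]
where $P$ is the image of $\mu\cube m_e$ in $\mathcal G$, each $\lambda_\omega$ is the Haar measure on the corresponding commutator coset, and $\int\lambda_\omega\,dP(\omega)=\mu\cube m_e$. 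In particular both face-marginals of $\mu\cube{m+1}_\bbullet$ equal $\mu\cube m_e$.

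With this self-joining in hand the conclusion is immediate: by Cauchy--Schwarz (Jensen) for the probability measure $P$,
\[
\int\Phi(\bx')\Phi(\bx'')\,d\mu\cube{m+1}_\bbullet=\int\Bigl(\int\Phi\,d\lambda_\omega\Bigr)^2 dP(\omega)\ \geq\ \Bigl(\int\int\Phi\,d\lambda_\omega\,dP(\omega)\Bigr)^2=\Bigl(\int\Phi\,d\mu\cube m_e\Bigr)^2 .
\]
As a sanity check, when $X$ is abelian one has $G_2=1$, the cosets are points, the two faces coincide, and the inequality becomes $\int\Phi^2\,d\mu\cube m_e\geq(\int\Phi\,d\mu\cube m_e)^2$. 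The main obstacle is the middle paragraph: verifying cleanly from~\eqref{eq:deffacegroup} that the discrepancy between the two faces is captured \emph{exactly} by $(G\cube m_e)_2$, and that the upper-only generators sweep out the full Haar measure on each coset---this is what makes the joining relatively independent over the \emph{diagonal} factor $\mathcal G$ rather than merely some self-joining. Passing this group computation through the inverse limit, as usual, lengthens but does not alter the argument.
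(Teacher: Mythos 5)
Your overall strategy is exactly the paper's: split the $(m+1)$-cube coordinates by the last digit, observe that the left-hand side is $\int \Phi(\bx')\Phi(\bx'')\,d\mu\cube{m+1}_\bbullet$, exhibit $\mu\cube{m+1}_\bbullet$ as a self-joining of $\mu\cube m_e$ of the form $\int\lambda_\omega\otimes\lambda_\omega\,dP(\omega)$, and finish with Cauchy--Schwarz. But the middle paragraph --- the step you yourself single out as the main obstacle --- is false as stated. The group measuring the discrepancy between the two faces is
$$
G\cube m_* = \{\bg\in G\cube m_e\colon (1\type m,\bg)\in G\cube{m+1}_\bbullet\}\ ,
$$
and, by uniqueness in the presentation~\eqref{eq:deffacegroup} applied to $G\cube{m+1}$, its elements are exactly the products $\prod_{\emptyset\neq\beta\subset[m]} g_\beta^{F_\beta}$ with $g_\beta\in G_{|\beta|+1}$: each upper-only generator of $G\cube{m+1}_\bbullet$ lies one step deeper in the lower central series than the codimension of the face it moves, because it comes from the face $F_{\beta\cup\{m+1\}}$ of the $(m+1)$-cube. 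This group is contained in, but in general strictly smaller than, $(G\cube m_e)_2 = G\cube m_e\cap (G_2)\type m$. Concretely, take $G$ a $2$-step group (e.g.\ Heisenberg) and $m=2$: then $(G\cube 2_e)_2 = \{(1,u,v,w)\colon u,v,w\in G_2\}$ has three free $G_2$-parameters, while $G\cube 2_* = \{(1,u,v,uv)\colon u,v\in G_2\}$ has only two, since the coefficient of the vertex face must lie in $G_3=\{1\}$. Hence $X\cube{m+1}_\bbullet$ is a proper, lower-dimensional subset of $\{(\bx,\bh\cdot\bx)\colon \bx\in X\cube m_e,\ \bh\in (G\cube m_e)_2\}$, and $\mu\cube{m+1}_\bbullet$ cannot be the relatively independent self-joining over the torus factor $X\cube m_e/(G\cube m_e)_2$: the upper face is \emph{not} free to move in its commutator coset, and your claimed disintegration fails.

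The gap is repairable, and the repair is what the paper does: do not try to identify the fiber group with the commutator subgroup, but condition on $G\cube m_*$ itself. One shows $G\cube{m+1}_\bbullet = \{(\bg,\bh\bg)\colon \bg\in G\cube m_e,\ \bh\in G\cube m_*\}$, that $G\cube m_*$ is a closed normal subgroup of $G\cube m_e$, and that
$$
\mu\cube{m+1}_\bbullet = \int \delta_\bx\times\nu_\bx\,d\mu\cube m_e(\bx)\ ,
$$
where $\nu_\bx$ is the Haar measure of the $G\cube m_*$-orbit of $\bx$; this follows from invariance under the elements $(1\type m,\bh)$, $\bh\in G\cube m_*$, and under the diagonal $(\bg,\bg)$, $\bg\in G\cube m_e$, together with uniqueness of the invariant measure on the nilmanifold $X\cube{m+1}_\bbullet$. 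Then $\int F\,d\nu_\bx = \E(F\mid\CF)(\bx)$ almost everywhere, with $\CF$ the $\sigma$-algebra of $G\cube m_*$-invariant sets, and the left-hand side becomes $\int \E(F\mid\CF)^2\,d\mu\cube m_e \geq \bigl(\int F\,d\mu\cube m_e\bigr)^2$. Note that your final Cauchy--Schwarz step never needed the conditioning factor to be the maximal torus; insisting that it is, is precisely where the proposal goes wrong.
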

\begin{proof}

 We first reduce the general case to that of 
an ergodic $k$-step nilsystem.  If
$(X,\mu,T)$ is an inverse limit of an increasing sequence of $k$-step 
ergodic nilsystems, then the spaces $X\cube m_e$ and 
$X\cube{m+1}_\bbullet$, as well as the measures $\mu\cube m_e$ and 
$\mu\cube{m+1}_\bbullet$, are the inverse limits of the corresponding 
objects associated to each of the nilsystems in the sequence of 
nilsystems converging to $X$. Thus it suffices to prove the 
proposition when $(X,T,\mu)$ is an ergodic $k$-step nilsystem. We 
write $X=G/\Gamma$ as usual.

 The groups $G\cube m_e$, $\Gamma\cube m_e$,
$G\cube {m+1}_{\bbullet
}$ and $\Gamma\cube {m+1}_{\bbullet
}$
have been defined and studied above. We recall that $X\cube 
m_{e }=G\cube m_e/\Gamma\cube m_e$ and that 
$\mu\cube m_e$ is the Haar measure of this nilmanifold. Also,
$X\cube{m+1}_{\bbullet
}=G\cube {m+1}_{\bbullet
}/ 
\Gamma\cube {m+1}_{\bbullet
}$ and 
$\mu\cube{m+1}_{\bbullet
}$ is the Haar measure of this 
nilmanifold.

It is convenient to identify $X\type{m+1}$ with
 $X\type m\times X\type m$, writing a point $\bx\in X\type{m+1}$ as 
$\bx=(\bx',\bx'')$, where 
$$
\bx'=(x_{\epsilon_1\dots\epsilon_m0}\colon 
\epsilon\in\{0,1\}^m)
\text{ and }
\bx''=(x_{\epsilon_1\dots\epsilon_m1}\colon
\epsilon\in\{0,1\}^m)\ .
$$ 

The \emph{diagonal map} $\Delta\cube m_X\colon X\type m\to X\type 
{m+1}$ is defined by $\Delta\cube m_X(\bx)=(\bx,\bx)$, that is,
$$
\text{for }\bx\in X\type m\text{ and }\epsilon\in\{0,1\}^{m+1},\quad
 \bigl(\Delta\cube m_X(\bx)\bigr)_\epsilon
=x_{\epsilon_1,\dots,\epsilon_m}\ .
$$
We remark that $\Delta\cube m_X(X\cube m_e)\subset 
X\cube{m+1}_\bbullet$.

We use similar notation for elements of $G\type{m+1}$ and define the 
diagonal map $\Delta\cube m_G\colon   G\type {m}\to 
G\type {m+1}$.  
We have 
$$
\Delta\cube m(G_e\cube m)\subset G\cube{m+1}_{\bbullet
}
$$
and, for every $\bg=(\bg',\bg'')\in G\cube{m+1}_{\bbullet
}$ we have that $\bg'$ and $\bg''$
belong to $G\cube m_e$; 
in other words,
$G\cube{m+1}_{\bbullet
}\subset G\cube m_e\times G\cube 
m_e$.
We define 
$$
 G\cube{m}_{*}=\{ \bg\in G\cube{m}_e
\colon 
(1\type m,\bg)\in G\cube{m+1}_\bbullet\}
$$
and we have that $G\cube{m}_{*}$ is a closed normal subgroup of 
$G\cube{m}_e$ and that
$$
 G\cube{m+1}_\bbullet
=\bigl\{(\bg,\bh\bg)\colon \bg\in G\cube 
m_e,\ \bh\in G\cube m_*\bigr\}\ .
$$
It follows that
$$
 X\cube{m+1}_\bbullet
=\bigl\{(\bx,\bh\cdot\bx)\colon
\bx\in X\cube m_e,\ \bh\in G\cube{m+1}_{*}\bigr\}\ .
$$

For every $\bx'\in X\cube{m}$, set 
$$
V_{\bx'} = \{\bx''\in X\cube{m}\colon (\bx', \bx'')\in X\cube{m+1}_\bbullet 
\}\ .
$$

For $\bx\in X\cube{m}$ and $\bg\in G\cube{m}_e$ we have
\begin{equation}
\label{eq:nugx}
\text{the image of }\nu_{\bx}\text{ under translation by }\bg
\text{ is equal to }\nu_{\bg\cdot\bx}\ .
\end{equation}
Indeed, this image  is supported on $\nu_{\bg\cdot\bx}$ and 
is invariant under $G\cube{m}_*$, since $G\cube{m}_*$ is normal 
in $G\cube{m}_e$.  

We claim that 
\begin{equation}
\label{eq:int-delta-nu}
\mu\cube{m+1}_\bbullet = 
\int\delta_{\bx}\times\nu_{\bx}\,d\mu\cube{m}_e(\bx) \ .
\end{equation}
The measure on $X\cube{m+1}_\bbullet$ defined by this integral 
is invariant under translation by elements of the form $(1\type 
m, \bh)$ with $\bh\in G\cube{m}_*$ (note that each $\delta_\bx\times\nu_{\bx}$ 
is invariant under such translations).   By~\eqref{eq:nugx}, the measure 
defined by this integral is also invariant under translation by $(\bg, 
\bg)$ for $\bg\in G\cube{m}_e$. Therefore this measure is 
invariant under $G\cube{m+1}_\bbullet$.  Since it is supported on 
$X\cube{m+1}_\bbullet$, it is equal to the Haar measure 
$\mu\cube{m+1}_\bbullet$  of this nilmanifold.  The claim 
is proven. 

By~\eqref{eq:nugx} again, $\nu_{\bh\cdot\bx} = \nu_{\bx}$  for $\bh\in G\cube{m}_*$.
Let $\CF$ denote the $\sigma$-algebra of $G\cube{m}_*$-invariant 
functions. 
For every bounded Borel function $F$ on $X\cube{m}_e$, 
\begin{equation}
\label{eq:int-F}
\int F\,d\nu_\bx = \E(F\mid\CF)(\bx)\quad 
\mu\cube{m}_e\text{-a.e.}
\end{equation}
To see this, we note that the 
function defined by this integral is invariant under translation 
by $G\cube{m}_*$ and thus is $\CF$-measurable.  Conversely, if $F$ 
is $\CF$-measurable, then for $\mu\cube{m}_e$ almost every 
$\bx$, it coincides $\nu_{\bx}$-almost everywhere with a constant and so 
the integral is equal almost everywhere to $F(\bx)$.

Thus for a bounded Borel function $F$ on $X\cube{m}_e$,  
using~\eqref{eq:int-delta-nu} and~\eqref{eq:int-F}, we have that 
\begin{align*}
\int F(\bx')F(\bx'')\, d\mu\cube{m+1}_\bbullet(\bx) & = 
\int\Bigl(F(\bx')\int F(\bx'')\,d\nu_{\bx'}(\bx'')
\Bigr)\, d\mu\cube{m}_e(\bx') \\
& = \int F\cdot \E(F\mid\CF)\,d\mu\cube{m}_e \\
& =\int\E(F\mid\CF)^2\,d\mu\cube{m}_e
\geq\Bigl( \int F\,d\mu\cube{m}_e \Bigr)^2\ .
\end{align*}

\end{proof}

\subsection{The measures $\mu\cube{m,r}_e$}

In this section again, $(X,\mu,T)$ is an ergodic inverse limit of 
$k$-step nilsystems, with base point $e\in X$. 
Let $m$ and $r$ be integers with $0\leq r<m$.

Let $\Delta_{m,r}\colon X\type {m-r}\to X\type{m}$ be the map given by
\begin{multline*}
 \text{for }\bx\in X\type {m-r}\text{ and }
\epsilon=\epsilon_1,\dots,\epsilon_{m}\in\{0,1\}^{m},\\
(\Delta_{m,r}\bx)_\epsilon=x_{\epsilon_{r+1}\dots\epsilon_{m}}\ .
\end{multline*}
We define:
\begin{gather}
\label{eq:Xamr}
 X\cube{m,r}_e=\Delta_{m,r}\bigl(X\cube {m-r}_e\bigr) \text{ and }\\
\label{eq:muamr}
\mu\cube{m,r}_e\text{ is the image of }\mu\cube {m-r}_e
\text{ under 
}\Delta_{m,r} \ .
\end{gather}

Recall that $X\cube {m-r}_e$ is the closed orbit of $e\type{m-r}$ 
under the transformations $T_i\type {m-r}$ for $1\leq i\leq m-r$ and 
that $\mu\cube {m-r}_e$ is the unique probability measure of this set 
invariant under these transformations.
We have  $\Delta_{m,r}e\type {m-r}=e\type m$, and, for $1\leq i\leq 
m-r$, 
$\Delta_{m,r}\circ T\type  {m-r}_i=T\type{m}_{r+i}\circ\Delta_{m,r}$. 
Therefore:

\medskip\noindent\emph{
 $X\cube{m,r}_e$ is the closed orbit of the point 
$e\type{m}\in X\cube{m}$ under the transformations 
$T\type{m}_i$ for $r+1\leq i\leq m$ and 
 $\mu_e\cube{m,r}$ is the unique probability measure on this set 
invariant under these transformations.
}

For example, $X\cube{m,0}_e=X\cube m_e\subset X\cube m$ and 
$\mu\cube{m,0}_e=\mu\cube m_e$.

$X\cube{r+1,r}_e=\{e\type r\}\times \Delta\type r\subset X\cube{r+1}$, 
where $\Delta\type r$ 
denotes the diagonal of $X\type r$. $\mu\cube{r+1,r}_{e }$ is the 
product of the Dirac mass at $e\type r$ by the diagonal measure of $X\type r$.

Since the image of $\mu\cube {m-r}_e$ under the projections 
$\bx\mapsto x_\epsilon$ with $\epsilon\neq\emptyset$, are equal to 
$\mu$, we have that:

\medskip\noindent\emph{
The images of $\mu\cube{m,r}_e$ under the projections $\bx\mapsto
x_\epsilon$ for $\epsilon\subset[m]$, $\epsilon\not\subset[r]$, are 
equal to $\mu$.
}

 Therefore,
if $h_\epsilon$, $\epsilon\subset[m]$, 
$\epsilon\not\subset [r]$, are $2^{m}-2^r$  measurable 
functions on $X$ with $|h_\epsilon|\leq 1$, we have that
\begin{equation}
\label{eq:projmu}
 \Bigl|\int\prod_{\substack{\epsilon\subset[m]\\ 
\epsilon\not\subset[r]}}h_\epsilon(x_\epsilon)\,
d\mu\cube{m,r}_{e }(\bx)\Bigr|
\leq\min_{\substack{\epsilon\subset[m]\\ \epsilon\not\subset[r]}}
\norm{h_\epsilon}_{L^1(\mu)}\ .
\end{equation}

\section{A convergence result}
\label{sec:induction}

In this section, we prove the key convergence result 
(Proposition~\ref{prop:ouf}).

\subsection{Context}
\label{subsec:context}
We recall our context, as introduced in 
Sections~\ref{subsec:Furstenberg} and~\ref{subsec:uniformity}.

The system $(X,T)$ is associated to the subalgebra $\CA$ of 
$\ell^\infty(\Z)$, $\mu$ is an ergodic invariant probability measure 
on $X$, associated to the averages on the sequence $\bI=(I_j\colon 
j\geq 1)$ of intervals.

For every $k\geq 1$, let $(Z_k,\mu_k,T)$ be the factor of order $k$ 
of $(X,\mu,T)$. We recall that this system is an inverse limit of 
$(k-1)$-step nilsystems, both in the topological and the ergodic 
theoretical senses. The system $(Z_k,T)$  is distal, minimal 
and uniquely ergodic, and $Z_k$ is given with a base point $e_k$.
In a futile attempt to keep the notation only mildly disagreeable, 
when the base point $e_k$ is used as
a subindex, we omit the subscript $k$.

We write $\pi_k\colon X\to Z_k$ for the factor 
map. We recall that this map is measurable, and has no reason for 
being continuous.
For $\ell\leq k$, $Z_\ell$ is a factor of $Z_k$, with a factor map
$\pi_{\ell, k}\colon Z_k\to Z_\ell$ which is continuous and 
 $\pi_{\ell,k}(e_k)=e_\ell$.

We use various different methods of taking limits of averages of 
sequences indexed by $\Z^r$.  For example, in 
Proposition~\ref{prop:zerodensity}, we average over any F\o lner 
sequence in $\Z^r$.  In the sequel, we use iterated limits: if 
$\bigl(a_n\colon n=(n_1, \ldots, n_r)\in\Z^r\bigr)$ is a bounded sequence, we define the iterated 
$\limsup$ of $a$ as
\begin{multline*}
\iter \limsup\vert\Av_{\bI,n_1, \ldots, n_r} a_{n_1, \ldots, n_r}\vert  \\ = 
\limsup_{j_1\to\infty}\ldots\limsup_{j_{r}\to\infty}
\frac{1}{|I_{j_1}|\ldots|I_{j_r}|}\Bigl\vert 
\sum_{\substack{n_1\in I_{j_1}\\ \ldots \\ {n_r\in I_{j_r}}}}a_{n_1, 
\ldots, n_r}
\Bigr\vert
\ .
\end{multline*}

We define the $\iter\lim\Av a_n$ analogously, assuming that all of 
the limits exist.  

\subsection{An upper bound}
The next proposition is proved in Section $13$ of~\cite{HK}:
\begin{proposition}
\label{prop:zerodensity}
Let $(X,\mu,T)$ be an ergodic system and $(Z_d,T,\nu)$ be its factor of 
order $d$. Let $f_\epsilon$, 
$\epsilon\subset [d]$, be $2^d$ bounded measurable functions on $X$. 
For $n=(n_1,\dots,n_d)\in\Z^d$, let
$$
 a_n=\int\prod_{\epsilon\subset[d]}f_\epsilon(T^{n\cdot\epsilon}x)\,d\mu(x)
\text{ and }
b_n=\int\prod_{\epsilon\subset[d]}
\E(f_\epsilon\mid Z_d)(T^{n\cdot\epsilon}z)\,d\mu_d(z)\ .
$$
Then $a_n-b_n$ converges to zero in density, meaning that the averages 
of $[a_n-b_n|$ on any F\o lner sequence in $\Z^d$ converge to zero.
\end{proposition}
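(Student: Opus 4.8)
The plan is to strip off the conditional expectations one factor at a time, and then to dispose of the absolute value in the definition of convergence in density by passing to the product system $X\times X$. First I apply the Structure Theorem and write, for each $\epsilon$, $f_\epsilon=\E(f_\epsilon\mid Z_d)\circ\pi_d+g_\epsilon$, where $g_\epsilon$ is annihilated by the uniformity seminorm governing these $\Z^d$-cube averages (the factor relevant to a $\Z^d$-cube is the inverse limit of $d$-step nilsystems, and it is this factor that is meant to enter $b_n$). Expanding the product defining $a_n$ into $2^{2^d}$ pieces and using that $\pi_d$ intertwines $T$ on $X$ with $T$ on $Z_d$, the single piece in which every factor is a conditional expectation is exactly $b_n$, while every remaining piece has the form $c_n=\int\prod_{\epsilon\subset[d]}h_\epsilon(T^{n\cdot\epsilon}x)\,d\mu(x)$ with at least one factor $h_{\epsilon_0}$ null for the seminorm. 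Since convergence to $0$ in density is stable under finite sums, it suffices to show that each such $c_n$ tends to $0$ in density, i.e. that $\frac1{|F|}\sum_{n\in F}|c_n|\to0$ along every F\o lner sequence $F$ in $\Z^d$.

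The obstacle is the absolute value, because the Cauchy--Schwarz--Gowers inequality controls $\Av_n c_n$ but not $\Av_n|c_n|$. I remove it by squaring and lifting to $(X\times X,\mu\times\mu,T\times T)$: setting $H_\epsilon=h_\epsilon\otimes h_\epsilon$, one has $c_n^2=\int_{X\times X}\prod_{\epsilon\subset[d]}H_\epsilon\bigl((T\times T)^{n\cdot\epsilon}\bw\bigr)\,d(\mu\times\mu)(\bw)$, so $c_n^2$ is itself an integrated cube average, now for the (generally non-ergodic) product system. By the ergodic theorem for cube averages applied there, $\Av_n c_n^2$ converges to $\int\prod_{\epsilon\subset[d]}H_\epsilon(\bw_\epsilon)\,d(\mu\times\mu)\cube d(\bw)$, and Cauchy--Schwarz--Gowers bounds this limit by $\prod_\epsilon\nnorm{H_\epsilon}_{d,\mu\times\mu}$.

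The key computation is that tensoring a function with itself squares the seminorm but raises its order by one:
$\nnorm{h\otimes h}_{d,\mu\times\mu}=\nnorm{h}_{d+1}^2$.
I would prove this by induction on $d$ from the inductive description $\nnorm{u}_{d+1}^{2^{d+1}}=\lim_H\frac1H\sum_{t=0}^{H-1}\nnorm{u\cdot u\circ T^t}_d^{2^d}$, together with the identity $(h\otimes h)\cdot(h\otimes h)\circ(T\times T)^t=(h\cdot h\circ T^t)\otimes(h\cdot h\circ T^t)$, the base case being $\nnorm{h\otimes h}_0=\bigl(\int h\,d\mu\bigr)^2=\nnorm{h}_1^2$. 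Consequently $\nnorm{H_{\epsilon_0}}_{d,\mu\times\mu}=\nnorm{h_{\epsilon_0}}_{d+1}^2=0$, so $\Av_n c_n^2\to0$, and Cauchy--Schwarz in $n$ gives $\frac1{|F|}\sum_{n\in F}|c_n|\le\bigl(\frac1{|F|}\sum_{n\in F}c_n^2\bigr)^{1/2}\to0$ for every F\o lner sequence, as required.

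The genuinely non-formal input is the convergence of the integrated cube averages $\Av_n c_n^2$ along an \emph{arbitrary} F\o lner sequence of $\Z^d$ --- not merely along boxes $[0,H)^d$ --- and in the non-ergodic product system; already for $d=1$ this rests on the vanishing of F\o lner averages of nontrivial characters, and in general it is precisely the convergence result recalled from~\cite{HK}. The tensor identity, which is what forces the characteristic factor of these $\Z^d$-cube averages to sit one order above the seminorm $\nnorm{\cdot}_d$ itself, is the conceptual heart of the argument; by contrast the telescoping and the Cauchy--Schwarz--Gowers inequality are routine given the machinery already in place.
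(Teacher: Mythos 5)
The paper contains no internal proof of Proposition~\ref{prop:zerodensity}: it is imported wholesale, with a citation to Section~13 of~\cite{HK}, so there is nothing in the text itself to compare against. Your reconstruction is sound in outline and is, in substance, the argument of the cited source: telescope $a_n-b_n$ into finitely many terms each containing one ``null'' factor, remove the absolute value by squaring, recognize $c_n^2$ as an integrated cube average for the product system $(X\times X,\mu\times\mu,T\times T)$, and close with Gowers--Cauchy--Schwarz plus the tensor identity $\nnorm{h\otimes h}_{d,\mu\times\mu}=\nnorm{h}_{d+1}^2$ (equivalently, $(\mu\times\mu)\cube{d}=\mu\cube{d+1}$ under the identification $(X\times X)\type{d}=X\type{d+1}$), followed by Cauchy--Schwarz in $n$. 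Each of these steps is correct, and the tensor identity is indeed the conceptual heart: it is precisely why density convergence is governed by a factor one order higher than the one controlling the unsigned averages.

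Two points deserve emphasis, both of which you navigated correctly but which a referee would want made explicit. First, the indexing. Your argument only closes if $\nnorm{g_{\epsilon_0}}_{d+1}=0$, so the decomposition must be taken relative to the factor characterized by $\nnorm{\cdot}_{d+1}$, i.e.\ the inverse limit of $d$-step nilsystems --- which is what you do. This is \emph{not} what the paper's Structure Theorem literally calls $Z_d$ (there $Z_d$ is characterized by $\nnorm{\cdot}_d$ and is an inverse limit of $(d-1)$-step nilsystems). Under that literal reading the proposition is false already for $d=1$: for an irrational rotation and $f_\emptyset=f_{\{1\}}=\cos 2\pi x$ one gets $a_n=\frac{1}{2}\cos 2\pi n\alpha$ and $b_n=0$, and the averages of $|a_n|$ do not vanish. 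Your reading matches the convention of~\cite{HK} and the way the proposition is actually used in Proposition~\ref{prop:ouf}, and it is the reading under which the statement is true; this off-by-one tension is in the paper, not in your proof, but it should be flagged rather than silently resolved. Second, the F\o lner issue you identify is genuine: the cube-average convergence of~\cite{HK} is proved for boxes, and what your argument needs along an arbitrary F\o lner sequence is only the upper bound $\limsup_N\Av_{n\in F_N}c_n^2\leq\prod_\epsilon\nnorm{H_\epsilon}_{d,\mu\times\mu}$ in the (non-ergodic) product system; this follows from the F\o lner form of the van der Corput inequality, of which your $d=1$ character-sum remark is the base case. With that bound spelled out (or with the F\o lner version of the convergence theorem proved), your proof is complete.
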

\begin{lemma}
\label{lem:boundkr}
Let $k\geq 1$, $0\leq r\leq d$ and $h_\epsilon$, 
$\epsilon\subset[d+1]$, $\epsilon\not\subset[r]$, be $2^{d+1}-2^r$ 
continuous functions on $Z_k$. Then for every $\delta>0$, there exists 
$C=C(\delta)>0$ with the following property:

Let $\psi_\epsilon$, $\epsilon\subset[r]$, be 
$2^{r}$ sequences belonging to $\CB$ with  absolute value $\leq 1$. 
Then the iterated $\limsup$ in $n_1,\dots,n_r$ of the absolute value of the averages 
on $\bI$ of 
$$
 A(n):=\prod_{\epsilon\subset[r]}\psi_\epsilon(n\cdot\epsilon)
\int \prod_{\substack{\epsilon\subset[d+1]\\ \epsilon \not\subset[r]}}
h_\epsilon(T^{n\cdot\epsilon}x_\epsilon)\,d\mu\cube{d+1,r}_{k\ e}(\bx)
$$
is bounded by 
$$
 \delta+C\prod_{r\in\epsilon\subset[r]}\norm{\psi_\epsilon}_{\bI,k+r}\ .
$$
\end{lemma}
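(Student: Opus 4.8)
The plan is to argue by induction on $r$. The base case $r=0$ is immediate: there is no averaging, so the left-hand side is the single number $\bigl|\int\prod_{\emptyset\neq\epsilon\subset[d+1]}h_\epsilon(x_\epsilon)\,d\mu\cube{d+1}_e(\bx)\bigr|$, the product on the right is empty, and it suffices to take $C$ larger than this fixed quantity. For the inductive step I assume the statement for $r-1$ and isolate the \emph{innermost} variable $n_r$, holding $n_1,\dots,n_{r-1}$ fixed and writing $n'=(n_1,\dots,n_{r-1},0)$.

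The first move is a rewriting that transfers the $n_r$-dependence onto the cube. Since $n\cdot\epsilon=n'\cdot\epsilon+\epsilon_r n_r$, we have $T^{n\cdot\epsilon}x_\epsilon=T^{n'\cdot\epsilon}\bigl((T_r\type{d+1})^{n_r}\bx\bigr)_\epsilon$, so the factors carrying $n_r$ are exactly those with $r\in\epsilon$. Accordingly I split $\prod_{\epsilon\subset[r]}\psi_\epsilon(n\cdot\epsilon)=P(n')\,a_{n_r}$, where $P(n')=\prod_{\sigma\subset[r-1]}\psi_\sigma(n\cdot\sigma)$ has absolute value $\le1$ and $a_{n_r}=\prod_{\sigma\subset[r-1]}\psi_{\sigma\cup\{r\}}(n\cdot\sigma+n_r)$; for fixed $n'$ the sequence $a=(a_{n_r})$ lies in $\CB$ (which is a shift-invariant algebra) and $\norm a_\infty\le1$. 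After pulling the average over $n_r$ inside the integral against $\mu\cube{d+1,r}_e$, the integrand factors, for each fixed cube point $\bx$, as an $n_r$-independent term $C_0(\bx,n')=\prod_{r\notin\epsilon,\ \epsilon\not\subset[r]}h_\epsilon(T^{n'\cdot\epsilon}x_\epsilon)$, times $a_{n_r}$, times a genuine nilsequence $b_{n_r}(\bx)=\prod_{r\in\epsilon,\ \epsilon\not\subset[r]}h_\epsilon\bigl(T^{n_r}(T^{n'\cdot\epsilon}x_\epsilon)\bigr)$, the last being a continuous function evaluated along an orbit in a finite power of $Z_k$.

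Now I apply Proposition~\ref{prop:boundphif} to the inner average $\Av_{n_r}a_{n_r}b_{n_r}(\bx)$: since a finite power of $Z_k$ is an inverse limit of $(k-1)$-step, hence of $k$-step, nilsystems, the proposition bounds it, uniformly in $\bx$, by $\delta'\norm a_\infty+C'\norm a_{\bI,k+1}$. Using reverse Fatou to pass $\limsup_{j_r}$ inside the integral and the estimate $\int|C_0|\,d\mu\le\prod\norm{h_\epsilon}_\infty=:C_1$, I obtain $\limsup_{j_r}\bigl|\Av_{n_r}A(n)\bigr|\le C_1(\delta'+C'\norm a_{\bI,k+1})$ as a function of $n'$. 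Choosing $\delta'=\delta/C_1$, the proof reduces to the purely seminorm-theoretic estimate
\[
\iter\limsup_{n_1,\dots,n_{r-1}}\Bigl\Vert\prod_{\sigma\subset[r-1]}\psi_{\sigma\cup\{r\}}(n\cdot\sigma+\,\cdot\,)\Bigr\Vert_{\bI,k+1}\le C_2\prod_{r\in\epsilon\subset[r]}\norm{\psi_\epsilon}_{\bI,k+r}.
\]

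To prove this I peel the outer variables off one at a time. The definition of $\norm\cdot_{\bI,d}$ gives the recursion $\norm a_{\bI,d+1}^{2^{d+1}}=\lim_H\frac1H\sum_{h}\norm{a\cdot a(\cdot+h)}_{\bI,d}^{2^d}$, and a two-function version of it together with Cauchy-Schwarz lets each averaging over an outer variable $n_i$ be absorbed as one new cube direction: it raises the degree by one and distributes the bound over the two corners of that direction, so after the $r-1$ outer variables the degree climbs from $k+1$ to $k+r$ and one recovers exactly $\prod_{\sigma\subset[r-1]}\norm{\psi_{\sigma\cup\{r\}}}_{\bI,k+r}$. I expect the main obstacle to be precisely this last step: reconciling the iterated $\limsup$ along the interval sequence $\bI$ over the outer variables with the F\o lner averaging built into the definition of $\norm\cdot_{\bI,\,\cdot}$, so that the Gowers-Cauchy-Schwarz bookkeeping applies cleanly and lands at the single degree $k+r$. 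As in the earlier proofs, the reduction from the inverse-limit setting to a single nilsystem is routine and I would suppress it.
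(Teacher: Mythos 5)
Your overall skeleton is the same as the paper's. The paper also isolates the innermost variable $p=n_r$, observes that $\Phi_m=\prod_{r\in\epsilon\subset[r]}\sigma^{m\cdot\epsilon}\psi_\epsilon$ lies in the shift-invariant algebra $\CB$ with $\norm{\Phi_m}_\infty\le 1$, and applies Proposition~\ref{prop:boundphif} to a fixed continuous function on a nilsystem built from $Z_k$, obtaining $\delta+C\norm{\Phi_m}_{\bI,k+1}$ uniformly in the outer variables. (The paper does this marginally more cleanly: it keeps \emph{all} the $h_\epsilon$, including those with $r\notin\epsilon$, inside one function $H(\bx)=\prod h_\epsilon(x_\epsilon)$ on $Z_k\cube{d+1,r}$ and uses the face transformation $T\type{d+1}_r$, so the bound is pointwise in $\bx$ and no reverse Fatou argument is needed; your variant through a finite power of $Z_k$ is equally valid.) Two structural cautions. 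First, your induction on $r$ is vacuous: the inductive hypothesis is never invoked, since your reduction lands on a seminorm inequality, not on the case $r-1$ of the lemma. Second, in your displayed ``seminorm-theoretic estimate'' the averages $\Av_{\bI,n_1,\dots,n_{r-1}}$ are missing. They are essential, not decorative: a sequence can have negligible $\norm\cdot_{\bI,k+r}$ seminorm and yet, along a sparse set of lags $m$, the products $\Phi_m$ can have $(k+1)$-seminorm bounded away from zero; sparse lags are invisible to the averages but dominate a limsup.

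The genuine gap is the final estimate itself, which you only sketch and explicitly flag as the expected obstacle. This is exactly the point where the paper stops arguing and quotes a theorem: after the H\"older step $\Av_{\bI,m}\norm{\Phi_m}_{\bI,k+1}\le\bigl(\Av_{\bI,m}\norm{\Phi_m}_{\bI,k+1}^{2^{r-1}}\bigr)^{1/2^{r-1}}$, it invokes a result of~\cite{HK3} asserting that the iterated limsup along $\bI$ of the averages of $\norm{\Phi_m}_{\bI,k+1}^{2^{r-1}}$ is in fact a limit and is bounded by $\prod_{r\in\epsilon\subset[r]}\norm{\psi_\epsilon}_{\bI,k+r}^{2^{r-1}}$. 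Your plan to derive this by ``peeling'' the recursion $\norm a_{\bI,d+1}^{2^{d+1}}=\lim_H\frac 1H\sum_{0\le h<H}\norm{a\cdot\sigma^h a}_{\bI,d}^{2^d}$ runs into precisely the difficulty you yourself name: the recursion averages its shift parameter by uniform Ces\`aro means $\lim_H\frac 1H\sum_{0\le h<H}$, whereas your outer variables are averaged along the specific interval sequence $\bI$, and nothing in the definition allows you to exchange one averaging scheme for the other. Closing that discrepancy for sequences in $\CB$ is the content of the quoted result of~\cite{HK3}; it is a theorem about these seminorms (resting on the correspondence between $\CB$ and a dynamical system), not a formal Cauchy--Schwarz bookkeeping exercise. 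So the argument must either prove that statement or cite it; as written, your proof is incomplete at its crux.
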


\begin{proof}
We write $n=(m_1,\dots,m_{r-1},p)$ and $m=(m_1,\dots,m_{r-1})$.
The expression  to be averaged can be rewritten as
\begin{multline*}
A'(m,p)=\\
 \prod_{\epsilon\subset[r-1]}
\psi_\epsilon(m\cdot\epsilon)\cdot
\prod_{r\in\epsilon\subset[r]}
\psi_\epsilon(m\cdot\epsilon+p)\cdot
\int \prod_{\substack{\epsilon\subset[d+1]\\ \epsilon \not\subset[r]}}
f_\epsilon(T^{m\cdot\epsilon+p\epsilon_r}x_\epsilon)
\,d\mu_{k\ e}\cube{d+1,r}(\bx)\ .
\end{multline*}
For $m\in\Z^{r-1}$, we write 
$$
 \Phi_m(p)= 
\prod_{r\in\epsilon\subset[r]}\psi_\epsilon(m\cdot\epsilon+p)=
\prod_{r\in\epsilon\subset[r]}\sigma^{m\cdot\epsilon}\psi_\epsilon(p)
$$
where $\sigma$ is the shift on $\ell^\infty(\Z)$.
For $\bx\in Z_k\cube{d+1,r}$, we also write 
$$
 H(\bx)=\prod_{\substack{\epsilon\subset[d+1]\\ \epsilon 
\not\subset[r]}}h_\epsilon(x_\epsilon)
$$
and for every $\delta>0$, we let $C=C(\delta)$ be associated to this 
continuous function on $X_k\cube{d+1,r}$ as in 
Proposition~\ref{prop:boundphif}.
We have
\begin{multline*}
 \prod_{r\in\epsilon\subset[r]}
\psi_\epsilon(m\cdot\epsilon+p)\cdot\prod_{\substack{\epsilon\subset[d+1]\\
 \epsilon \not\subset[r]}}
h_\epsilon(T^{m\cdot\epsilon+p\epsilon_r}x_\epsilon)\\
=\Phi_m(p)H\bigl(T_r^{[d+1]p}(T_1^{[d+1]m_1}\dots 
T_{r-1}^{[d+1]m_{r-1}}\bx)\bigr)
\end{multline*}
and thus
\begin{multline*}
 \Bigl|\limsup_j \Av_{p\in I_j}
\prod_{r\in\epsilon\subset[r]}\psi_\epsilon(m\cdot\epsilon+p)
\cdot
\prod_{\substack{\epsilon\subset[d+1]\\ \epsilon \not\subset[r]}}
h_\epsilon(T^{m\cdot\epsilon+p\epsilon_r}x_\epsilon) \Bigr|\\
\leq 
\delta+C\norm{\Phi_m}_{\bI,k+1}
\end{multline*}
for every $m$ and every $\bx\in Z_k\cube{d+1,r}$. Taking the integral,
$$
  \bigl|\limsup_j \Av_{p\in I_j}A'(m,p)\bigr|\leq \delta+C\norm{\Phi_m}_{\bI,k+1}
$$
for every $m$.  Therefore
\begin{multline*}
 \iter\limsup\bigl|\Av_{\bI,n_1,\dots,n_r}A(n)\bigr|\\
\leq  
\iter\limsup\Av_{\bI,n_1,\dots,n_{r-1}}
\bigl|\lim_j \Av_{p\in I_j}A'(m,p)\bigr|\\
\leq\delta+C\iter\limsup\Av_{\bI,m_1,\dots,m_{r-1}}\norm{\Phi_m}_{\bI,k+1}\\
\leq\delta+C\iter\limsup
\Bigl(\Av_{\bI,m_1,\dots,m_{r-1}}\norm{\Phi_m}_{\bI,k+1}^{2^{r-1}}\Bigr)^{1/2^{r-1}}\ .
\end{multline*}
By a result in~\cite{HK3}, the last $\limsup$ is 
actually a limit  and is 
bounded by
$$
\prod_{r\in\epsilon\subset[r]}\norm{\psi_\epsilon}_{\bI,k+r}\ .\qed
$$
\renewcommand{\qed}{}
\end{proof}

\subsection{Iteration}
\begin{proposition}
\label{prop:recur}
Let $k\geq 1$, $0\leq r\leq d$ and $h_\epsilon$, 
$\epsilon\subset[d+1]$, $\epsilon\not\subset[r]$, be $2^{d+1}-2^r$ 
bounded measurable functions on $Z_k$.  Let $\phi_\epsilon$, 
$\epsilon\subset[r]$, be $2^r$ continuous functions on $X$. For 
$n\in\Z^r$, define
$$
 A(n)=\prod_{\epsilon\subset[r]}\psi_\epsilon(n\cdot\epsilon)
\int \prod_{\substack{\epsilon\subset[d+1]\\ \epsilon \not\subset[r]}}
h_\epsilon(T^{n\cdot\epsilon}x_\epsilon)\,d\mu_{k\ e}\cube{d+1,r}(\bx)
$$
and
\begin{multline*}
 B(n)=
\prod_{\epsilon\subset[r-1]}\phi_\epsilon(T^{n\cdot\epsilon}x_0)\cdot\\
\int 
\prod_{r\in\epsilon\subset[r]}\E(\phi_\epsilon\mid 
Z_{k+r-1})(x_\epsilon)\cdot
\prod_{\substack{\epsilon\subset[d+1]\\ \epsilon 
\not\subset[r]}}h\circ p_{k+r-1,k}(x_\epsilon)\,
d\mu_{k+r-1\ e}\cube{d+1,r-1}(\bx)\ .
\end{multline*}
Then the iterated limit of the averages of $A(n)-B(n)$ is zero.
\end{proposition}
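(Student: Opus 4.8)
The plan is to factor the passage from $A(n)$ to $B(n)$ into the two changes it encodes: first replacing the outer weights $\psi_\epsilon$ with $r\in\epsilon\subset[r]$ by the nilsequences determined by $Z_{k+r-1}$, and then absorbing these into the cube integral while enlarging the cube from type $(d+1,r)$ to type $(d+1,r-1)$ and lifting it from level $k$ to level $k+r-1$. Here $\psi_\epsilon(\ell)=\phi_\epsilon(T^\ell x_0)$. By the bound~\eqref{eq:projmu} the cube integral depends continuously on each $h_\epsilon$ in $L^1(\mu)$, so I may assume the $h_\epsilon$ continuous.

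\emph{Stage I: replacing the weights.} Let $f_\epsilon$ be a continuous function on $Z_{k+r-1}$ close to $\E(\phi_\epsilon\mid Z_{k+r-1})$ in $L^1(\mu_{k+r-1})$, and let $A_1(n)$ be obtained from $A(n)$ by replacing, for each $\epsilon$ with $r\in\epsilon\subset[r]$, the weight $\psi_\epsilon(n\cdot\epsilon)$ by $f_\epsilon(T^{n\cdot\epsilon}e_{k+r-1})$. I would show $\iter\lim\Av(A-A_1)=0$ by telescoping the $2^{r-1}$ replacements one coordinate at a time. Each single difference is again of the form handled by Lemma~\ref{lem:boundkr}, all of whose weights lie in $\CB$ with modulus $\le 1$ and whose weight on the changed coordinate is the sequence $\bigl(\psi_\epsilon(\ell)-f_\epsilon(T^\ell e_{k+r-1})\bigr)_{\ell}$. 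Lemma~\ref{lem:boundkr} bounds the associated iterated $\limsup$ by $\delta+C\prod_{r\in\epsilon\subset[r]}\norm{\cdot}_{\bI,k+r}$; every factor of this product is $\le 1$ except the one attached to the changed coordinate, which is the $\norm{\cdot}_{\bI,k+r}$-seminorm of that difference sequence and is made arbitrarily small by Proposition~\ref{prop:approx} at level $k+r-1$ as $f_\epsilon\to\E(\phi_\epsilon\mid Z_{k+r-1})$. Letting $\delta\to 0$ kills each difference. This is precisely where the matched indices $Z_{k+r-1}$ and $\norm{\cdot}_{\bI,k+r}$ are used, and it is the full $r$-fold iterated average built into Lemma~\ref{lem:boundkr} that accounts for the jump from $k$ to $k+r-1$.

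\emph{Stage II: absorbing the weights into the cube.} It remains to prove $\iter\lim\Av(A_1-B)=0$, which is the crux. Since $Z_k$ is a factor of $Z_{k+r-1}$, Proposition~\ref{prop:mesuresmua}(c) identifies $\mu\cube{d+1,r}_{k,e}$ with the image of $\mu\cube{d+1,r}_{k+r-1,e}$ under the coordinatewise factor map, so I may replace each $h_\epsilon$ by $h_\epsilon\circ p_{k+r-1,k}$ and integrate over the level-$(k+r-1)$ cube; I work from now on inside $Z:=Z_{k+r-1}$, with base point $e$. The task is then the identity, valid after iterated averaging, moving the $2^{r-1}$ factors $f_\epsilon(T^{n\cdot\epsilon}e)$ — taken along the orbit of $e$ and sitting \emph{outside} the type-$(d+1,r)$ integral — \emph{inside} a type-$(d+1,r-1)$ integral as $f_\epsilon(T^{n\cdot\epsilon}x_\epsilon)$. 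The type-$(d+1,r-1)$ cube is the closed orbit of $e\type{d+1}$ under the face transformations $T\type{d+1}_r,\dots,T\type{d+1}_{d+1}$, hence uniquely ergodic, and is obtained from the type-$(d+1,r)$ cube exactly by adjoining the direction $r$; on it every coordinate $x_\epsilon$ with $r\in\epsilon\subset[r]$ coincides with the single coordinate $x_{\{r\}}$, whose law under $\mu\cube{d+1,r-1}_{e}$ is $\mu$. Disintegrating $\mu\cube{d+1,r-1}_{e}$ over this coordinate by Proposition~\ref{prop:muax} (after permuting directions so that $x_{\{r\}}$ is the distinguished coordinate), the innermost average over $p=n_r$ turns into a $\mu$-average over $x_{\{r\}}$ weighted by the product of the $f_\epsilon$'s; Proposition~\ref{prop:intphif}, whose characteristic factor is exactly $Z_{k+r-1}=Z$, is what licenses this passage from a weighted average along the orbit of $e$ to an integral against $\mu$. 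Averaging the result over $n_1,\dots,n_{r-1}$ against the common prefactor $\prod_{\epsilon\subset[r-1]}\phi_\epsilon(T^{n\cdot\epsilon}x_0)$ then produces $B(n)$.

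\emph{Main obstacle.} Stage II is the hard part: it is not a soft seminorm estimate but a genuine measure identity, and the difficulty is to bookkeep three simultaneous moves — enlarging the cube by the $r$-th face direction, lifting the level through $p_{k+r-1,k}$, and relocating the weights onto the coordinate $x_{\{r\}}$ — while respecting that the shifts in directions $1,\dots,r-1$ do not preserve $\mu\cube{d+1,r-1}_{e}$, so that the equality survives only in the iterated limit. I expect the cleanest route is to establish the identity for the innermost $p$-average with $m=(n_1,\dots,n_{r-1})$ fixed, using unique ergodicity of the enlarged cube together with Proposition~\ref{prop:intphif}, and only afterwards to average over $m$. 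The passage to inverse limits, as throughout the paper, is routine and I would suppress it.
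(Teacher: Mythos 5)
Your Stage~I is exactly the paper's first step: the paper also telescopes the $2^{r-1}$ weights one at a time, applies Lemma~\ref{lem:boundkr} with the difference sequence $\bigl(\phi_\epsilon(T^nx_0)-\widetilde\phi_\epsilon(T^ne_{k+r-1})\bigr)_n$ occupying one slot, and kills that slot's $\norm{\cdot}_{\bI,k+r}$-seminorm by Proposition~\ref{prop:approx} at level $k+r-1$; likewise your lifting of the integral to level $k+r-1$ via Proposition~\ref{prop:mesuresmua}~c) is the paper's move. For the innermost average the correct tool is the one you mention only in passing, unique ergodicity of the enlarged partial cube: every weak-$*$ limit of $\Av_{n_r\in I_j}\bigl((T\type{d+1}_r)^{n_r}\bigr)_*\mu\cube{d+1,r}_{k+r-1\ e}$ is invariant under $T\type{d+1}_r,\dots,T\type{d+1}_{d+1}$ and carried by $X\cube{d+1,r-1}_e$, hence equals $\mu\cube{d+1,r-1}_{k+r-1\ e}$ (this is what the paper compresses into ``by definition of the measures''); the detour through Proposition~\ref{prop:muax} and Proposition~\ref{prop:intphif} is not the right mechanism, since after Stage~I the innermost variable no longer sees the point $x_0$ at all.

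The genuine gap is at the exact point you flagged as the main obstacle, and it is not survivable in the form you hope. Fix $m=(n_1,\dots,n_{r-1})$ and set $S_m=\prod_{i<r}(T\type{d+1}_i)^{n_i}$. Since $x_\epsilon=e$ for all $\epsilon\subset[r]$ on the support of $\mu\cube{d+1,r}_{k+r-1\ e}$, the weights enter the integral as coordinate functions, and the quantity averaged in $n_r$ is $\int G\circ S_m\circ(T\type{d+1}_r)^{n_r}\,d\mu\cube{d+1,r}_{k+r-1\ e}$, where $G(\bx)=\prod_{\epsilon\not\subset[r-1]}g_\epsilon(x_\epsilon)$ with $g_\epsilon=\widetilde\phi_\epsilon$ for $r\in\epsilon\subset[r]$ and $g_\epsilon=h_\epsilon\circ p_{k+r-1,k}$ for $\epsilon\not\subset[r]$. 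The weak-$*$ convergence above therefore gives
$$
\lim_{j\to\infty}\Av_{n_r\in I_j}(\cdots)=\int G\circ S_m\,d\mu\cube{d+1,r-1}_{k+r-1\ e}\ ,
$$
i.e.\ the limit \emph{retains} the shifts $T^{m\cdot\epsilon}$, $m\cdot\epsilon=\sum_{i\in\epsilon\cap[r-1]}n_i$, on every coordinate --- precisely your intermediate formula with $f_\epsilon(T^{n\cdot\epsilon}x_\epsilon)$ inside. These shifts cannot be dropped: $S_m$ does not preserve $\mu\cube{d+1,r-1}_e$, and the discrepancy does not wash out under the remaining averages either. Concretely, realize an irrational rotation ($X=Z_1=Z_2=\T$, $e=x_0=0$) through the algebra of sequences $\bigl(\phi(n\alpha)\bigr)_n$, take $d=r=2$, $k=1$, $\phi_{\{1,2\}}(x)=h_{\{1,3\}}(x)=h_{\{1,2,3\}}(x)=1+\cos 2\pi x$ and all other $\phi_\epsilon,h_\epsilon\equiv 1$; then $A(n)=\bigl(1+\cos 2\pi(n_1+n_2)\alpha\bigr)\bigl(1+\tfrac12\cos 2\pi n_2\alpha\bigr)$, whose iterated limit of averages is $1$, while the shift-free integral in $B(n)$ is the constant $\int\!\!\int(1+\cos 2\pi t)(1+\cos 2\pi s)\bigl(1+\cos 2\pi(t+s)\bigr)\,dt\,ds=\tfrac54$. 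So your concluding claim that Stage~II ``produces $B(n)$'' fails. (In fairness, $B(n)$ as printed omits these shifts as well, and the paper's own one-line justification glosses over the same point: the version of $B(n)$ that is true --- and that the iteration in Proposition~\ref{prop:ouf} actually consumes, since the output of one step must have the same shifted form as the next step's $A(n)$ --- carries $T^{n\cdot\epsilon}$, summed over $\epsilon\cap[r-1]$, inside the integral. Your Stage~II should terminate at that shifted expression rather than at the printed $B(n)$.)
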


\begin{proof}
We remark that $B(n)$ 
depends only on $n_1,\dots,n_{r-1}$.

By~\eqref{eq:projmu}, it suffices to prove the result in the 
case that the functions $f_\epsilon$ are continuous. We can also 
assume that $|\phi_\epsilon|\leq 1$ for every $\epsilon\subset[r]$.

Let $\delta>0$ be given and let $C$ be as in Lemma~\ref{lem:boundkr}.
For each $\epsilon$ with $r\in\epsilon\subset[d+1]$, let $\widetilde\phi_\epsilon$ 
be a continuous function on $Z_{k+r-1}$ with $|\widetilde\phi_\epsilon|\leq 1$, 
such that 
$\norm{\E(\phi_\epsilon\mid Z_{k+r-1})-\widetilde\phi_\epsilon}$ is sufficiently 
small. We have that
$$
 \norm{(\widetilde\phi_\epsilon(T^ne_{k+r-1})\colon n\in\Z)-
(\phi_\epsilon(T^nx_0)\colon n\in\Z)}_{\bI,k+r}\leq \delta/2^{r-1}C
$$
for every $\epsilon$.  This follows from Proposition~\ref{prop:approx}.

By Lemma~\ref{lem:boundkr} the iterated $\limsup$ of the absolute value of the averages 
on $\bI$ of 
\begin{multline*}
A(n)-  \\ \prod_{\epsilon\subset[r-1]}\phi_\epsilon(T^{n\cdot\epsilon}x_0)\cdot
\prod_{r\in\epsilon\subset[r]}\widetilde\phi_\epsilon(T^{n\cdot\epsilon}e_{k+r-1})\cdot
\int \prod_{\substack{\epsilon\subset[d+1]\\ \epsilon \not\subset[r]}}
h_\epsilon(T^{n\cdot\epsilon}x_\epsilon)\,d\mu\cube{d+1,r}_{k\ e}(\bx)
\end{multline*}
is bounded by $2\delta$.
We rewrite the second term in this difference as
\begin{multline*}
\prod_{\epsilon\subset[r-1]}\phi_\epsilon(T^{n\cdot\epsilon}x_0)\cdot 
\prod_{r\in\epsilon\subset[r]}\widetilde\phi_\epsilon(T^{n\cdot\epsilon}e_{k+r-1})\cdot \\
\int \prod_{\substack{\epsilon\subset[d+1]\\ \epsilon \not\subset[r]}}
h_\epsilon\circ 
p_{k+r-1,r}(T^{n\cdot\epsilon}x_\epsilon)\,d\mu_{k+r-1\ e}\cube{d+1,r}(\bx)
\end{multline*}
and remark that the first product in this last expression depends only on 
$n_1,\dots,n_{r-1}$. 

By definition of the measures, the averages in 
$n_r$ on $\bI$ of the above expression converges to
$$
\prod_{\epsilon\subset[r-1]}\phi_\epsilon(T^{n\cdot\epsilon}x_0)\cdot
\int \prod_{r\in\epsilon\subset[r]}\widetilde\phi_\epsilon(x_\epsilon)\cdot
\prod_{\substack{\epsilon\subset[d+1]\\ \epsilon 
\not\subset[r]}}h_\epsilon\circ p_{k+r-1,k}(x_\epsilon)\,
d\mu_{k+r-1}\cube{d+1,r-1}(\bx)\ .
$$
By~\eqref{eq:projmu} again, for every $n_1,\dots,n_{r-1}$ the difference between this expression 
and $B(n)$ is bounded by $\delta$.

The announced result follows.
\end{proof}

\begin{proposition}
\label{prop:ouf}
Let $k\geq 1$  and let $f_\epsilon$, 
$\epsilon\subset[d+1]$,  $\epsilon\neq\emptyset$, be $2^{d+1}-1$ 
continuous functions on $X$.
Then the iterated averages for $n=(n_1,\dots,n_d, n_{d+1})\in\Z^{d+1}$ 
on $\bI$ of
\begin{equation}
\label{eq:prop1bis}
 \prod_{\emptyset\neq\epsilon\subset[d+1]}
f_\epsilon(T^{n\cdot\epsilon}x_0)
\end{equation}
converge to
\begin{equation}
\label{eq:prop2}
 \int  \prod_{\emptyset\neq\epsilon\subset[d+1]}
\E(f_\epsilon\mid Z_d)(x_{\epsilon})
\,d\mu_{d\ e}\cube{d+1}(\bx)\ .
\end{equation}
\end{proposition}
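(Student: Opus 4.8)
The plan is to compute the iterated average of~\eqref{eq:prop1bis} by running Proposition~\ref{prop:recur} as an induction that peels off one averaging variable at a time, decreasing the parameter $r$ from $d+1$ down to $0$ and landing exactly on~\eqref{eq:prop2}. First I would observe that~\eqref{eq:prop1bis} is the ``top'' instance of the expressions $A(n)$ occurring in Proposition~\ref{prop:recur}: all $2^{d+1}-1$ factors sit on the orbit of $x_0$, the missing $\emptyset$-corner is pinned at the base point, and the integral part is empty (formally $r=d+1$, no $h_\epsilon$). Since Proposition~\ref{prop:recur} is stated only for $r\le d$, the innermost reduction — the average over $n_{d+1}$ — must be carried out by hand. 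For fixed $n_1,\dots,n_d$, the product of the $2^d$ factors $f_\epsilon$ with $d+1\in\epsilon$ equals $\Psi(T^{n_{d+1}}x_0)$ for a single continuous function $\Psi$ on $X$, so by~\eqref{eq:intphi} its average over $n_{d+1}\in I_{j_{d+1}}$ converges to $\int\Psi\,d\mu$. This folds the last variable into the partial-diagonal measure $\mu\cube{d+1,d}_e$ (which collapses the corners with $d+1\in\epsilon$ onto a single $\mu$-distributed point and the remaining corners onto $e$) and puts the expression into the form $A(n)$ at level $r=d$, to which Proposition~\ref{prop:recur} applies.

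I would then iterate Proposition~\ref{prop:recur}: at each stage the iterated average of the current $A(n)$ agrees, up to a defect that vanishes in the iterated limit, with that of the corresponding $B(n)$, which is again an $A$-type expression with one fewer orbit variable, the freed corner functions replaced by their conditional expectations onto the appropriate factor and integrated against the next measure $\mu\cube{d+1,r}_e$. After finitely many steps we reach $r=0$, where no orbit variable survives and the expression is the constant~\eqref{eq:prop2}. Since there are finitely many steps and each contributes only a vanishing defect, the iterated average of~\eqref{eq:prop1bis} converges to~\eqref{eq:prop2}.

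The engine driving the vanishing of the defects is Lemma~\ref{lem:boundkr}: after replacing each conditional expectation by a nearby continuous function on the factor (Proposition~\ref{prop:approx}), the defect at a given stage is controlled by a product of uniformity seminorms $\norm{\cdot}_{\bI,k+r}$ of the relevant differences, which are small by construction. The crucial numerology is that after the base reduction the relevant level is $k+r=d+1$, and Proposition~\ref{prop:approx} identifies smallness of $\norm{\cdot}_{\bI,d+1}$ precisely with closeness to the $Z_d$-conditional expectation; this is why the limit is governed by $Z_d$, one order lower than the full unpointed $(d+1)$-cube, which would be governed by $Z_{d+1}$. Equivalently, once every $f_\epsilon$ has been replaced by a continuous function $g_\epsilon$ on $Z_d$, one can finish directly: the point $(T^{n\cdot\epsilon}e_d)_{\emptyset\neq\epsilon\subset[d+1]}$ is the image of $e\type{d+1}$ under $(T\type{d+1}_1)^{n_1}\cdots(T\type{d+1}_{d+1})^{n_{d+1}}$, so it ranges over the orbit of $e\type{d+1}$ in the cube system $Z_d\cube{d+1}_e$, which by Proposition~\ref{prop:mesuresmua} is minimal and uniquely ergodic with invariant measure $\mu\cube{d+1}_e$; the iterated interval averages then converge to $\int\prod_{\emptyset\neq\epsilon}g_\epsilon\,d\mu\cube{d+1}_e$, which is~\eqref{eq:prop2}.

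The main obstacle I expect is the bookkeeping that keeps the reduction anchored at $Z_d$ throughout. The face transformations make the factor index move under each application of Proposition~\ref{prop:recur}, and it is only after integrating against the highly degenerate partial-diagonal measures $\mu\cube{d+1,r}_e$ — using their relative-independence properties from Proposition~\ref{prop:mesuresmua} — that the successive conditional expectations collapse consistently onto $Z_d$ rather than onto higher factors. A secondary point is that we work with iterated interval averages along the fixed sequence $\bI$ rather than genuine F\o lner averages, so the final unique-ergodicity step must be invoked one coordinate direction at a time; and, as throughout Section~\ref{sec:four}, the passage from single nilsystems to their inverse limits adds routine but lengthy verifications that I would suppress.
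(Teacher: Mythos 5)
Your skeleton — average over $n_{d+1}$ by hand via~\eqref{eq:intphi}, run Proposition~\ref{prop:recur} down to $r=0$, and collapse onto $Z_d$ at the end by relative independence — is the same as the paper's, but there is a genuine gap at the junction between the first step and the iteration. After averaging over $n_{d+1}$ one obtains
$$
 \prod_{\emptyset\neq\epsilon\subset[d]}f_\epsilon(T^{n\cdot\epsilon}x_0)\cdot
\int \prod_{\substack{\epsilon\subset[d+1] \\ \epsilon\not\subset[d]}}
f_\epsilon(T^{n\cdot\epsilon}x)\,d\mu(x)\ ,
$$
in which the integrand still consists of the original functions $f_\epsilon$ on $X$, integrated against $\mu$. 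This is \emph{not} ``the form $A(n)$ at level $r=d$'': in Proposition~\ref{prop:recur} (and in Lemma~\ref{lem:boundkr}) the functions inside the integral must live on a nilfactor $Z_k$ and the measure must be $\mu\cube{d+1,r}_{k\ e}$ — objects that Section~\ref{sec:four} defines only for inverse limits of nilsystems. The system $X$ produced by the correspondence principle is an arbitrary system, so neither the measures $\mu\cube{m,r}_e$ nor the engine behind Lemma~\ref{lem:boundkr} (Proposition~\ref{prop:boundphif}, which requires the function to be continuous on an inverse limit of nilsystems) applies to it. The missing step is the paper's appeal to Proposition~\ref{prop:zerodensity}: the difference between the displayed expression and $A(n):=\prod_{\epsilon\subset[d]}f_\epsilon(T^{n\cdot\epsilon}x_0)\cdot\int\prod_{\epsilon\not\subset[d]}\E(f_\epsilon\mid Z_d)(T^{n\cdot\epsilon}z)\,d\mu_d(z)$ converges to zero in density, and only after this replacement are the hypotheses of Proposition~\ref{prop:recur} (with $k=r=d$) met. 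This is not bookkeeping that can be suppressed: it is the characteristic-factor theorem of~\cite{HK}, and it is precisely where $Z_d$ enters the answer.

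Relatedly, your ``crucial numerology'' paragraph misattributes why the limit is governed by $Z_d$. Proposition~\ref{prop:approx} and Lemma~\ref{lem:boundkr} control only the \emph{outer} orbit factors $\phi_\epsilon(T^{n\cdot\epsilon}x_0)$; in the iteration they push the factor index \emph{up}, so that after $d$ steps one has $\E(f_\epsilon\mid Z_{\ell(\epsilon)})$ with $d\leq\ell(\epsilon)\leq k$ integrated against $\mu\cube{d+1}_{k\ e}$, and it is the relative independence of $\mu\cube{d+1}_{k\ e}$ with respect to $\mu\cube{d+1}_{d\ e}$ (Proposition~\ref{prop:mesuresmua}, part d)) that brings everything back to $Z_d$ — a step you do acknowledge. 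But no part of that sequence-seminorm machinery can replace the \emph{inner} functions $f_\epsilon$ on $X$ by $\E(f_\epsilon\mid Z_d)$; that replacement needs the ergodic-theoretic seminorms $\nnorm{\cdot}_d$ and the Structure Theorem, i.e.\ Proposition~\ref{prop:zerodensity}. Your alternative ending (``once every $f_\epsilon$ has been replaced by a continuous function $g_\epsilon$ on $Z_d$, finish by unique ergodicity of $Z_d\cube{d+1}_e$'') is fine as far as it goes, but it founders on the same point: nothing you invoke justifies that replacement.
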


\begin{proof}
For notational convenience we define $f_\emptyset$ to be the constant 
function $1$.

By~\eqref{eq:intphi}, the averages in $n_{d+1}$ 
of~\eqref{eq:prop1bis} converge to
\begin{equation}
\label{eq:prop1}
 \prod_{\emptyset\neq\epsilon\subset[d]}f_\epsilon(T^{n\cdot\epsilon}x_0)\cdot
\int \prod_{\substack{\epsilon\subset[d+1] \\\epsilon\not\subset[d]}}
f_\epsilon(T^{n\cdot\epsilon}x)
\,d\mu(x)
\end{equation}
and it remains to show that the iterated averages in 
$(n_1,\dots,n_d)$ of this expression converge to~\eqref{eq:prop2}.

By Proposition~\ref{prop:zerodensity}, the difference between the 
quantity~\eqref{eq:prop1} and 
$$
 A(n):=\prod_{\epsilon\subset[d]}
f_\epsilon(T^{n\cdot\epsilon}x_0)\cdot
\int \prod_{\substack{\epsilon\subset[d+1] \\ \epsilon\not\subset[d]}}
\E(f_\epsilon\mid Z_d)(T^{n\cdot\epsilon}x)\,d\mu_d(x)
$$
converges to zero in density and we are reduced to study the 
iterated convergence of the averages of $A(n)$.

We apply Proposition~\ref{prop:recur} 
with $k=d$ and $r=d$ and left with studying the iterated limit of the 
averages in $n_1,\dots,n_{d-1}$ of
\begin{multline*}
 \prod_{\epsilon\subset[d-1]}f_\epsilon(T^{n\cdot\epsilon}x_0)\cdot\\
\int 
\prod_{d\in\epsilon\subset[d]}
\E(f_\epsilon\mid Z_{2d-1})(x_\epsilon)\cdot
\prod_{\substack{\epsilon\subset[d+1]\\ \epsilon 
\not\subset[d]}}
\E(f_\epsilon\mid Z_d)\circ p_{2d-1,d}(x_\epsilon)\,
d\mu_{2d-1\ e}\cube{d+1,d-1}(\bx)\ .
\end{multline*}

After $d-r$ steps, we are left with the iterated limit of the 
averages in $n_1,\dots,n_r$ of an expression of the form
$$
 \prod_{\epsilon\subset[r]}f_\epsilon(T^{n\cdot\epsilon}x_0)\cdot\\
\int 
\prod_{\substack{\epsilon\subset[d+1]\\ \epsilon\not\subset [r]}}
E(f_\epsilon\mid Z_{\ell(\epsilon)})\circ 
p_{k,\ell(\epsilon)}(x_\epsilon)\,d\mu_{k\ e}\cube{d+1,r}(\bx)\ ,
$$
where $k=k(r)\geq d$ is an integer and where for every $\epsilon$, 
$d\leq\ell(\epsilon)\leq k$.

Finally, after $d$ steps, we  have that the iterated limit of the 
expression~\eqref{eq:prop1}  exists and is equal to
$$
 \int  \prod_{\emptyset\neq\epsilon\subset[d+1]}
\E(f_\epsilon\mid Z_{\ell(\epsilon)})\circ p_{k,\ell(\epsilon)}(x_{\epsilon})
\,d\mu_{k\ e}\cube{d+1}(\bx)\ ,
$$
where $k$ is an integer and $d\leq \ell(\epsilon)\leq k$ for every 
$\epsilon$.

By Proposition~\ref{prop:mesuresmua}, the measure $\mu\cube 
{d+1}_{k\ e}$ is relatively independent with respect to its projection 
$\mu\cube{d+1}_{d\ e}$ on $Z_d\cube {d+1}$. For every $\epsilon$,
$$
 \E\bigl( \E(f_\epsilon\mid Z_{\ell(\epsilon)})\circ p_{k,\ell(\epsilon)}
\mid Z_d\bigr)=\E(f_\epsilon\mid Z_d)
$$
and we have that the above limit is equal to~\eqref{eq:prop2}.
\end{proof}

\section{Positivity}
\label{sec:proofmain}
In this Section, $\CA$, $X$, $\mu$, $\bI = (I_j\colon j\geq 1)$, \dots are as in 
Sections~\ref{subsec:Furstenberg} and~\ref{subsec:uniformity}. 
Given a sequence of intervals $(J_k\colon k\geq 1)$ in $\Z$ whose 
lengths tend to infinity, we assume that for each $j\geq 1$, there 
exists some $k=k(j)$ such that the interval $I_j$ is included in 
$J_k$.

We simplify the notation: we write $Z$ instead of $Z_d$, $\nu$ instead 
of $\mu_d$, $e$ instead of $e_d$. If $f$ is a function on $X$, 
$\widetilde f=\E(f\mid Z)$.

\subsection{Positivity}
\begin{lemma}
\label{lem:positive3}
Let $B\subset \Z$ be such that $\one_B\in\CA$ and let $f$ be the 
continuous function on $X$ associated to this set:
$$
 f(T^nx_0)=\one_B(n)\ .
$$
Let $m\geq 1$ be an integer and  let $h_\epsilon$, 
$\emptyset\neq\epsilon\subset[m]$, be $2^m-1$ 
nonnegative bounded measurable functions on $Z$. 
Assume that
$$
 \int\prod_{\emptyset\neq\epsilon\subset[m]}h_\epsilon(x_\epsilon)\,
d\nu_e\cube m(\bx)>0
$$
and that 
$$
 \Z\setminus B\text{ is not a }\PW\Nil_d\Bohr_0\ .
$$
Then
$$
 \int \widetilde f(x_{ \{m+1\} })\cdot
\prod_{\substack{\epsilon\subset[m+1]
\\ \epsilon\neq\emptyset,\{m+1\} }}
h_{\epsilon\cap[m] }(x_\epsilon)\, d\nu\cube{m+1}_e(\bx)>0\ .
$$
\end{lemma}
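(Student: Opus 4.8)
The plan is to reduce the desired strict positivity to two ingredients already available: the base-point positivity of Proposition~\ref{prop:positive2}, and the transfer from a positive value at the base point to a genuine integral supplied by Corollary~\ref{cor:intphif}. The bridge between the two is the disintegration of $\nu\cube{m+1}_e$ along the coordinate indexed by the singleton $\{m+1\}$, furnished by Proposition~\ref{prop:muax}. Throughout I work in the ergodic inverse limit $(Z,\nu,T)$ of $(d-1)$-step nilsystems, to which all of these results apply.

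First I would apply Proposition~\ref{prop:muax} with $m$ replaced by $m+1$, so that the distinguished singleton is $\{m+1\}$, to the system $(Z,\nu,T)$. Setting
$$
F(x)=\int \prod_{\substack{\epsilon\subset[m+1]\\ \epsilon\neq\emptyset,\{m+1\}}} h_{\epsilon\cap[m]}(x_\epsilon)\,d\nu\cube{m+1}_{e,x}(\bx)\ ,
$$
part~(2) of that proposition gives that $F$ is continuous, while part~(3) rewrites the integral in the conclusion of the lemma as $\int \widetilde f(x)\,F(x)\,d\nu(x)$. Since each $h_\epsilon$ is nonnegative we have $F\geq 0$, and since $f=\one_{\widetilde B}\geq 0$ we have $\widetilde f=\E(f\mid Z)\geq 0$. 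Thus everything reduces to showing that $\int\widetilde f\cdot F\,d\nu>0$.

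The key step is to evaluate $F$ at the base point. Taking $g=1$ in the defining formula~\eqref{eq:muaxg} for $\nu\cube{m+1}_{e,x}$ shows $\nu\cube{m+1}_{e,e}=\nu\cube{m+1}_\bbullet$, so $F(e)$ is precisely the left-hand side of Proposition~\ref{prop:positive2} for the system $(Z,\nu,T)$ with the functions $h_\epsilon$. That proposition then gives
$$
F(e)\geq\Bigl(\int \prod_{\emptyset\neq\epsilon\subset[m]} h_\epsilon(x_\epsilon)\,d\nu\cube m_e(\bx)\Bigr)^2>0\ ,
$$
where the strict inequality is exactly the first hypothesis of the lemma. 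Hence $F$ is a nonnegative continuous function on $Z$ with $F(e)>0$.

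Finally I would invoke the contrapositive of Corollary~\ref{cor:intphif} with $k=d$ and $S=B$: since $\Z\setminus B$ is assumed not to be a $\PW\Nil_d\Bohr_0$-set, for every nonnegative continuous $g$ on $Z$ with $g(e)>0$ the integral $\int f\cdot(g\circ\pi_d)\,d\mu=\int\widetilde f\cdot g\,d\nu$ must be nonzero, and hence strictly positive because both factors are nonnegative. Applying this with $g=F$ yields $\int\widetilde f\cdot F\,d\nu>0$, which is the conclusion. I expect the main obstacle to be the index bookkeeping in the disintegration step: reconciling the convention of Proposition~\ref{prop:muax}, where the distinguished coordinate is a singleton (here $\{m+1\}$), with the product over $h_{\epsilon\cap[m]}$ in the statement, and verifying that $F(e)$ is literally the $\nu\cube{m+1}_\bbullet$-integral appearing in Proposition~\ref{prop:positive2}.
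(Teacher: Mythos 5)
Your proposal is correct and follows essentially the same route as the paper: Proposition~\ref{prop:positive2} gives positivity of the $\nu\cube{m+1}_\bbullet$-integral, and Proposition~\ref{prop:muax} supplies the continuous disintegration function (the paper's $H$, your $F$) with $F(e)>0$ and rewrites the target integral as $\int\widetilde f\cdot F\,d\nu$. The only cosmetic difference is the final step: where you invoke the contrapositive of Corollary~\ref{cor:intphif}, the paper re-runs that corollary's argument inline, introducing the set $\Lambda=\{n\in\Z\colon H(T^ne)>H(e)/2\}$ and using Proposition~\ref{prop:intphif} together with the hypothesis that $\Z\setminus B$ is not $\PW\Nil_d\Bohr_0$ --- the same mechanism, just unpacked.
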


\begin{proof}
By Proposition~\ref{prop:positive2},
$$
\int\prod_{\substack{\epsilon\subset[m+1]\\ 
\epsilon\neq\emptyset,\{m+1\} }}
h_{\epsilon\cap[m] }(x_\epsilon)\,d\nu\cube{m+1}_\bbullet
(\bx)>0\ .
$$
For $z\in Z$, define
$$
 H(z)=\int\prod_{\substack{\epsilon\subset[m+1] \\ 
\epsilon\neq\emptyset,\ \epsilon\neq\{m+1\} }}
h_{\epsilon\cap[m] }(x_\epsilon)\,d\nu_{e, z}\cube{m+1}(\bx)\ .
$$
We have that $\delta:=H(e)>0$ and, by Proposition~\ref{prop:muax},
 $H$ is continuous on $Z_d$. Therefore, the subset
$$
 \Lambda=\{n\in\Z\colon H(T^ne)>\delta/2\}
$$
 is a $\Nil_d\Bohr$-set.

By the same proposition,
$$
 \int\widetilde f(x_{ \{m+1\} })\cdot
\prod_{\substack{\epsilon\subset[m+1] \\ 
\epsilon\neq\emptyset, \{m+1\} }}
h_{\epsilon\cap[m] }(x_\epsilon)\,d\nu_e\cube{m+1}
=\int \widetilde f(z)H(z)\,d\nu(z)\ .
$$
By Proposition~\ref{prop:intphif}, this last integral is equal to
$$
 \lim\Av_\bI f(T^nx_0) H(T^ne)\geq\frac{\delta}{2}
\limsup_j\frac 1{|I_j|}
|\Lambda\cap B\cap I_j|\ .
$$
If this $\limsup$ is equal to zero, then there exist arbitrarily long 
intervals $J_\ell$ such that $\Lambda\cap B\cap J_\ell=\emptyset$
and thus the set $\Z\setminus B$ contains $\Lambda\cap J_\ell$ for 
all $\ell$.  Therefore $\Z\setminus B$ is a $\PW\Nil_g\Bohr$-set, hence a contradiction.
\end{proof}

\begin{corollary}
\label{cor:positive3}
Let $B\subset \Z$ be such that $\one_B\in\CA$ and let $f$ be the 
continuous function on $X$ associated to this set. Assume that 
$\Z\setminus B$ is not a $\PW\Nil_d\Bohr_0$-set.
Then, for every $m$,
\begin{equation}
\label{eq:psitivem}
 \int\prod_{\emptyset\neq\epsilon\subset[m]} \widetilde f(x_\epsilon)\,
d\nu\cube m_e(\bx)>0\ .
\end{equation}
\end{corollary}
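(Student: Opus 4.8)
The plan is to prove~\eqref{eq:psitivem} by induction on $m$, writing
$$
I_m=\int\prod_{\emptyset\neq\epsilon\subset[m]}\widetilde f(x_\epsilon)\,d\nu\cube m_e(\bx)
$$
for the quantity to be shown positive. First I would record the only structural fact needed about $\widetilde f$: since $f=\one_{\widetilde B}$ takes values in $[0,1]$, its conditional expectation $\widetilde f=\E(f\mid Z)$ also satisfies $0\leq\widetilde f\leq 1$, so $\widetilde f$ is nonnegative and may legitimately play the role of the functions $h_\epsilon$ in Lemma~\ref{lem:positive3}.

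For the base case $m=1$, I would identify $I_1$ explicitly: by Proposition~\ref{prop:mesuresmua}~b) the image of $\nu\cube 1_e$ under the projection $\bx\mapsto x_{\{1\}}$ is $\nu$, so $I_1=\int\widetilde f\,d\nu=\int f\,d\mu=\mu(\widetilde B)$. Were this zero, I would apply Corollary~\ref{cor:intphif} with $k=d$, $S=B$, and the nonnegative function identically equal to $1$ on $Z_d$ (which is positive at $e_d$): the hypothesis $\int\one_{\widetilde B}\cdot 1\,d\mu=0$ then yields that $\Z\setminus B$ is a $\PW\Nil_d\Bohr_0$-set, contradicting the standing assumption. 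Hence $I_1>0$.

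For the inductive step, assuming $I_m>0$, I would apply Lemma~\ref{lem:positive3} with the integer $m$ and the choice $h_\epsilon=\widetilde f$ for every $\emptyset\neq\epsilon\subset[m]$. The first hypothesis of the lemma is then precisely $I_m>0$, and the second is the standing assumption. One checks that every $\epsilon\subset[m+1]$ with $\epsilon\neq\emptyset,\{m+1\}$ satisfies $\epsilon\cap[m]\neq\emptyset$ (otherwise $\epsilon\subset\{m+1\}$), so each factor $h_{\epsilon\cap[m]}$ is $\widetilde f(x_\epsilon)$; combining these with the factor $\widetilde f(x_{\{m+1\}})$ produced by the lemma, the conclusion reads
$$
\int\widetilde f(x_{\{m+1\}})\prod_{\substack{\epsilon\subset[m+1]\\ \epsilon\neq\emptyset,\{m+1\}}}\widetilde f(x_\epsilon)\,d\nu\cube{m+1}_e(\bx)
=\int\prod_{\emptyset\neq\epsilon\subset[m+1]}\widetilde f(x_\epsilon)\,d\nu\cube{m+1}_e(\bx)=I_{m+1}>0,
$$
which is exactly the statement for $m+1$ and closes the induction.

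Once Lemma~\ref{lem:positive3} is available, the inductive step is pure bookkeeping and I do not expect it to present any real difficulty; the collapse of the two families of factors into the single product over all nonempty $\epsilon\subset[m+1]$ is the whole content. The step demanding the most care is the base case, where one must correctly pass from the density statement $\mu(\widetilde B)=\int f\,d\mu=0$ to the conclusion that $\Z\setminus B$ is $\PW\Nil_d\Bohr_0$. Rather than reproduce the gap argument of Corollary~\ref{cor:intphif} by hand (that a subset whose relative density inside $I_j$ tends to $0$ leaves subintervals of lengths tending to infinity, since the largest gap in $I_j$ is at least $|I_j|/(|B\cap I_j|+1)$), I prefer to invoke that corollary directly with the constant function, so that the only genuinely delicate point—matching the intervals $I_j\subset J_{k(j)}$ against the quantifiers in the definition of strongly piecewise—has already been handled there.
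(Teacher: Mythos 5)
Your proof is correct and follows essentially the same route as the paper's: induction on $m$, with Lemma~\ref{lem:positive3} applied to $h_\epsilon=\widetilde f$ for the inductive step, and the base case reduced to showing $\int f\,d\mu>0$. The only cosmetic difference is that where the paper argues the base case inline (if $\int f\,d\mu=0$ then $B$ has vanishing density in the $I_j$, so $\Z\setminus B$ contains arbitrarily long intervals inside them, a contradiction), you invoke Corollary~\ref{cor:intphif} with the constant function $1$ on $Z_d$ — which is the same density/gap argument, packaged as a citation.
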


\begin{proof}
We remark first that $\int f\,d\mu>0$. Indeed, if this integral is 
zero, then the density of the set $B$ in the intervals $I_j$ 
converges to $0$ and $\Z\setminus B$ contains arbitrarily long 
intervals, a contradiction. 

We show~\eqref{eq:psitivem} by induction. 
We have that $\nu_{e }\cube 1=\delta_e\times\nu$ and thus
$$
 \int \widetilde f(x_1)\,d\nu_{d\ e}\cube 1(\bx)
=\int \widetilde f\,d\nu=\int  f\,d\mu>0\ .
$$
Assume that~\eqref{eq:psitivem} holds for some $m\geq 1$. Then
Lemma~\ref{lem:positive3} applied to $h=\widetilde f$ shows that it 
holds for $m+1$.
\end{proof}

\subsection{And now we gather all the pieces of the puzzle.}
Recall that if $E$ is a finite subset of $\N$, $\Sumset(E)$ is the set 
consisting in all sums of distinct elements of $E$ (the empty sum is 
not considered). A subset $A$ of 
$\Z$ is a $\Sumset_m^*$-set if $A\cap \Sumset(E)\neq\emptyset$ for every subset 
$E$ of $\N$ with $m$ elements.

We prove Theorem~\ref{th:sums}:
\begin{theorem*}
Let $A$ be a $\Sumset_{d+1}^*$ set. Then $A$ is a $\PW\Nil_d\Bohr$-set.
\end{theorem*}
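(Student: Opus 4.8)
The plan is to argue by contradiction, running the set through the correspondence of Sections~\ref{subsec:Furstenberg} and~\ref{sec:proofmain} and then confronting the positivity result with the convergence result. Suppose $A$ is \emph{not} a $\PW\Nil_d\Bohr_0$-set, fix a sequence $(J_k)$ of intervals with $|J_k|\to\infty$ witnessing this, and put $B=\Z\setminus A$. I would take $\CA$ to be the shift-invariant algebra generated by $\one_A$, so that $\one_B=1-\one_A\in\CA$, and run the correspondence relative to $(J_k)$ — so the intervals $I_j$ of $\bI$ are subintervals of the $J_k$ — producing the system $(X,T)$, the point $x_0$, the ergodic measure $\mu$, and the factors $Z_k$ with base points $e_k$. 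Keeping the abbreviations $Z=Z_d$, $\nu=\mu_d$, $e=e_d$ of Section~\ref{sec:proofmain}, let $f$ be the (clopen, hence continuous) function on $X$ with $f(T^nx_0)=\one_B(n)$ and set $\widetilde f=\E(f\mid Z)$.

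Since $\Z\setminus B=A$ is assumed not to be a $\PW\Nil_d\Bohr_0$-set, Corollary~\ref{cor:positive3} applies and gives, with $m=d+1$,
\begin{equation}
\label{eq:plan-pos}
\int\prod_{\emptyset\neq\epsilon\subset[d+1]}\widetilde f(x_\epsilon)\,d\nu\cube{d+1}_e(\bx)>0\ .
\end{equation}
On the other hand, Proposition~\ref{prop:ouf} with every $f_\epsilon$ taken equal to $f$ identifies the left side of~\eqref{eq:plan-pos} as the iterated limit over $\bI$ of the averages of
$$
\prod_{\emptyset\neq\epsilon\subset[d+1]}f(T^{n\cdot\epsilon}x_0)
=\prod_{\emptyset\neq\epsilon\subset[d+1]}\one_B(n\cdot\epsilon)\ ,
\qquad n=(n_1,\dots,n_{d+1})\in\Z^{d+1}\ .
$$
Combining the two, this iterated average is strictly positive.

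Now I would extract a single good configuration. The integrand is a product of $\{0,1\}$-valued factors, so it equals $1$ exactly when $n\cdot\epsilon=\sum_{i\in\epsilon}n_i\in B$ for every nonempty $\epsilon\subset[d+1]$. Since the $I_j$ lie in $\N$ and, for each pair $i\neq j$, the proportion of tuples with $n_i=n_j$ is at most $1/\max(|I_{j_i}|,|I_{j_j}|)\to 0$, the degenerate tuples contribute nothing to the iterated limit; its positivity therefore forces the existence of a tuple with \emph{distinct positive} coordinates $n_1,\dots,n_{d+1}$ for which every nonempty subset sum lies in $B$. Setting $E=\{n_1,\dots,n_{d+1}\}$, a set of exactly $d+1$ elements, we obtain $\Sumset(E)=\{n\cdot\epsilon:\emptyset\neq\epsilon\subset[d+1]\}\subseteq B=\Z\setminus A$, whence $A\cap\Sumset(E)=\emptyset$ — contradicting the hypothesis that $A$ is a $\Sumset_{d+1}^*$-set. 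This would show $A$ is a $\PW\Nil_d\Bohr_0$-set.

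The step I expect to be the real obstacle is this final extraction: passing from a merely \emph{positive} iterated average to an honest $(d+1)$-element subset of $\N$, since a priori the positive mass could concentrate on degenerate tuples with repeated or non-positive coordinates that fail to yield $d+1$ distinct elements. What rescues it is precisely the iterated (rather than joint) structure of the limit together with the freedom to take the intervals inside $\N$: sending each interval to infinity in turn makes the diagonal contributions negligible, so a nondegenerate tuple survives and delivers the required set $E$.
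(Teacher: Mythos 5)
Your proof is correct and follows essentially the same route as the paper: contradiction, the correspondence run relative to the witnessing intervals $(J_k)$, Corollary~\ref{cor:positive3} for the positivity of the integral, Proposition~\ref{prop:ouf} to identify it with the iterated limit of averages of $\prod_{\emptyset\neq\epsilon\subset[d+1]}f(T^{n\cdot\epsilon}x_0)$, and then extraction of a $(d+1)$-element subset of $\N$ whose sumset lies in $B$. Your explicit argument that tuples with repeated coordinates contribute nothing to the iterated limit actually fills in a point the paper's own proof passes over silently (it asserts the product is nonzero only if $\Sumset(\{n_1,\dots,n_{d+1}\})\subset B$ and invokes the $\Sumset_{d+1}^*$ hypothesis, without noting that this set could have fewer than $d+1$ elements when coordinates coincide).
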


\begin{proof}
Let $B=\Z\setminus A$, $\CA$  a subalgebra of $\ell^\infty(\Z)$ 
containing $\one_B$ and $X,\mu, \bI, \dots$ are as above.  The continuous 
function $f$ 
on $X$ is  associated to $\one_B$ and we use the same notation as above.

Assume that $A$ is not a $\PW\Nil_d\Bohr$-set.  
By Corollary~\ref{cor:positive3},
$$
  \int\prod_{\emptyset\neq\epsilon\subset[d+1]} \widetilde f(x_\epsilon)\,
d\nu\cube {d+1}_e(\bx)>0
$$
and by Proposition~\ref{prop:ouf}, this integral is equal to the 
iterated limit of the averages in $n=(n_1,\dots,n_{d+1})$ of
$$
 \prod_{\emptyset\neq\epsilon\subset[d+1]} f(T^{n\cdot\epsilon}x_0)
\ .
$$
This product is nonzero if and only if $\Sumset(\{n_1,\dots,n_{d+1}\})
\subset B$. But the complement $A$ of $B$ in 
$\Z$ is a $\Sumset_{d+1}^*$-set (recall that the $n_i$ belong to some 
of the intervals $I_j$), and so this can not 
happen.
\end{proof}

\section{Proof of Theorem~\ref{th:holes}}
\label{sec:iteration}
We now prove Theorem~\ref{th:holes} (recall that 
Theorem~\ref{th:Delta} is a particular case of this theorem):
\begin{theorem*}
Every $\SH_d^*$-set is a $\PW\Nil_d\Bohr_0$-set.
\end{theorem*}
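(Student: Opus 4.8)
The plan is to prove Theorem~\ref{th:holes} by reducing an $\SH_d^*$-set to the situation already handled in Theorem~\ref{th:sums}, extracting a structured subsequence from an arbitrary sequence $P$ on which $\SH_d$-sums reproduce the behaviour of ordinary sumsets $\Sumset$. The key observation is the remark following Definition~\ref{def:SH}: when $|P| = d+1$ we have $\SH_d(P) = \Sumset(P)$, so that an $\SH_d^*$-set must intersect $\Sumset(E)$ for every $(d+1)$-element set $E$, making it a $\Sumset_{d+1}^*$-set. But the hypothesis for an $\SH_d^*$-set is stated for every \emph{infinite} sequence $P$, which is stronger, and I want to run the same correspondence-principle machinery with the full freedom of gaps allowed in $\SH_d$. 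So rather than merely quoting Theorem~\ref{th:sums}, I would re-run its proof, replacing the role of the product over $\emptyset \neq \epsilon \subset [d+1]$ indexed by a single coordinate block by a product that tracks the $\SH_d$ gap structure.

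Concretely, first I would set $B = \Z \setminus A$, build the algebra $\CA \ni \one_B$, and pass to the associated system $(X,\mu,T)$, intervals $\bI$, and factors $Z = Z_d$ exactly as in Sections~\ref{subsec:Furstenberg} and~\ref{subsec:uniformity}. Assume for contradiction that $A$ is not a $\PW\Nil_d\Bohr_0$-set; then Corollary~\ref{cor:positive3} gives, for \emph{every} $m$,
\[
\int \prod_{\emptyset \neq \epsilon \subset [m]} \widetilde f(x_\epsilon)\, d\nu\cube{m}_e(\bx) > 0 \ .
\]
The point is that Corollary~\ref{cor:positive3} holds for all $m$, not just $m = d+1$, so I have positivity of arbitrarily high-dimensional cube integrals at my disposal. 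I would then invoke Proposition~\ref{prop:ouf} (applied with $m-1$ in place of $d$, i.e. on $X\cube{m}$) to identify this positive integral with the iterated limit of the averages of $\prod_{\emptyset \neq \epsilon \subset [m]} f(T^{n\cdot \epsilon} x_0)$ over $n = (n_1, \dots, n_m)$. Positivity of the limit forces the existence of choices $n_1, \dots, n_m$ drawn from the intervals $I_j$ for which this product is nonzero, i.e. for which $\Sumset(\{n_1, \dots, n_m\}) \subset B$.

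The main obstacle, and the reason this is a genuine iteration rather than a restatement, is that an $\SH_d^*$-set constrains gaps of length $< d$ between consecutive chosen indices, whereas the plain sumset $\Sumset(\{n_1, \dots, n_m\})$ places no such restriction. To exploit the $\SH_d$ hypothesis I would build the sequence $P$ greedily: having produced indices realizing $\Sumset(\{n_1, \dots, n_m\}) \subset B$ for ever-larger $m$, I would reindex them into a single infinite sequence $P = (p_i)$ so that every $\SH_d(P)$-sum—which by Definition~\ref{def:SH} is a sum $\sum \epsilon_i p_i$ with runs of zeros shorter than $d$—appears as an ordinary sum $\sum_{\epsilon} n_\bullet$ over a suitable sub-block, hence lands in $B$. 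This is where the gap bound $< d$ matters: it guarantees that the relevant partial sums can be grouped into at most the $d+1$ "active" coordinates that the $d$-step machinery controls, so each admissible $\SH_d$-pattern is captured by one of the nonzero cube-products supplied above. Concluding, $\SH_d(P) \subset B = \Z \setminus A$ contradicts the assumption that $A$ is an $\SH_d^*$-set, completing the proof.
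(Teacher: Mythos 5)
Your proposal has a reversed implication at the outset and, more importantly, a fatal gap exactly where the real work of the theorem lies. The opening claim that an $\SH_d^*$-set is a $\Sumset_{d+1}^*$-set is backwards: since $\Sumset(E)\subset\SH_d(P)$ when $E$ is the set of the first $d+1$ terms of an infinite sequence $P$, it is the $\Sumset_{d+1}^*$ property that implies the $\SH_d^*$ property, not conversely (for $d=1$, the set $\{n\in\N\colon n\geq 4\}$ is a $\Delta^*$-set, hence $\SH_1^*$, yet it misses $\Sumset(\{1,2\})=\{1,2,3\}$, so it is not $\Sumset_2^*$). This is precisely why Theorem~\ref{th:holes} is \emph{stronger} than Theorem~\ref{th:sums} and cannot be reduced to it. There is also a technical mismatch in your use of the machinery: Corollary~\ref{cor:positive3} gives positivity of $\int\prod_{\emptyset\neq\epsilon\subset[m]}\widetilde f(x_\epsilon)\,d\nu\cube m_e(\bx)$ with $\widetilde f=\E(f\mid Z_d)$, an integral over cubes of the \emph{fixed} factor $Z_d$, whereas Proposition~\ref{prop:ouf} applied in dimension $m$ identifies the combinatorial iterated limit with an integral of $\E(f\mid Z_{m-1})$ against $\mu\cube{m}_{m-1\ e}$. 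For $m>d+1$ these are different quantities: part d) of Proposition~\ref{prop:mesuresmua} only gives relative independence of an $m$-dimensional cube measure over the $(m-1)$-step factor, so positivity over $Z_d$-cubes does not transfer to the quantity you need; and you cannot rerun Corollary~\ref{cor:positive3} at level $m-1$ either, because ``$A$ is not $\PW\Nil_d\Bohr_0$'' does not imply ``$A$ is not $\PW\Nil_{m-1}\Bohr_0$''.

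The decisive gap is the endgame. Even granting that for each $m$ you can produce $n_1,\dots,n_m$ in the intervals with $\Sumset(\{n_1,\dots,n_m\})\subset B$, these finite configurations are obtained \emph{independently} for each $m$: nothing makes them nested or compatible, and sums mixing elements of different configurations are completely uncontrolled, so no infinite sequence $P$ with $\SH_d(P)\subset B$ results. Your ``greedy reindexing'' is exactly the missing argument, and the heuristic that gap-bounded sums ``can be grouped into at most $d+1$ active coordinates'' does not hold: elements of $\SH_d(P)$ involve unboundedly many terms, and block sums are not themselves elements your configurations control. The paper's proof solves this coherence problem by an entirely different induction: it builds $P=(p_j)$ one term at a time, carrying along $2^d-1$ functions $h^{(j)}_\epsilon$ which are products of translates $\prod_{q\in E_j}T^qf$, where $E_j$ is the set of gap-constrained sums already achieved, and maintains the invariant~\eqref{eq:recurr} that their $d$-dimensional $Z_d$-cube integral is positive. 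Proposition~\ref{prop:positive2}, Lemma~\ref{lem:positive3}, and Proposition~\ref{prop:ouf} (used only in dimension $d+1$, where the $Z_d$-relative independence is actually available) are then invoked at each step to choose $p_j$ so that the invariant persists and $E_{j-1}+p_j\subset B$; the gap bound $<d$ emerges because the cube has only $d$ coordinates in which previous choices can be ``remembered.'' None of this mechanism appears in your proposal, and without it the contradiction cannot be reached.
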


\subsection{The method}
The proof is by contradiction. In this section, $d\geq 1$ is an 
integer and $A$
 is  a subset of the integers. We assume that $A$ is not 
a $\PW\Nil_d\Bohr_0$-set and by induction, we build an infinite sequence
$P=(p_j\colon j\geq 1)$ such that $A\cap\SH_d(P)=\emptyset$.

Let $\CA$ be a subalgebra of $\ell^\infty(\Z)$ 
containing $\one_A$ and let $X,\mu,\dots$ and the sequence of 
intervals $\bI=(I_j\colon j\geq 1)$ be as in 
Sections~\ref{subsec:Furstenberg} and~\ref{subsec:uniformity}.
We have the same conventions as in the preceding section for
the intervals $I_j$.

We write $B = \Z\setminus A$ and  let
 $f$ be the continuous function 
on $X$ associated to $\one_B$ (see Section~\ref{subsec:Furstenberg}):
$$
 f(T^nx_0)=\begin{cases} 1 & \text{if } n\in B\ ;\\
0 &\text{otherwise.}
\end{cases}
$$
As in Section~\ref{sec:proofmain}, we simplify the notation: we write $Z$ instead of $Z_d$, $\nu$ instead 
of $\mu_d$, and $e$ instead of $e_d$. If $f$ is a function on $X$, 
$\widetilde f=\E(f\mid Z)$.  

In this section, it is more convenient to index points of 
$X\type d$ by $\{0,1\}^d$ instead of by $\CP([d])$.  Thus a 
point $\bx\in X\type d$ is written $\bx = (x_\epsilon\colon 
\epsilon\in\{0,1\}^d)$.

For every $j\geq 1$, by induction we build $2^d-1$ continuous 
nonnegative functions $h_{\epsilon}^{(j)}$, 
$00\ldots 0\neq\epsilon\in\{0,1\}^d$, on $X$ satisfying
\begin{equation}
\label{eq:recurr}
 \int\prod_{\substack{\epsilon\in\{0,1\}^d\\ 
\epsilon\neq 00\ldots 0}}\widetilde 
h_{\epsilon}^{(j)}(x_\epsilon)\,d\nu\cube d_e(\bx)>0\ .
\end{equation}

We start by setting all of the
functions $h_{\epsilon}^{(0)}$, $00\ldots 0\neq\epsilon\in\{0,1\}^d$, to be  
equal to $f$.
By Corollary~\ref{cor:positive3} applied with $m=d$ and 
rewritten in the 
current notation, we have that property~\eqref{eq:recurr} is 
satisfied for $i=0$.  

\subsection{Iteration}
Assume $j\geq 1$ and that 
property~\eqref{eq:recurr} is satisfied for $j-1$.

By Proposition~\ref{prop:positive2},
$$
 \int\prod_{\substack{\epsilon\in\{0,1\}^{d+1}\\ 
\epsilon\neq 00\dots 0, \epsilon\neq 00\dots 01 }}
\widetilde h_{\epsilon_1\dots\epsilon_d}^{(j-1)}(x_\epsilon)
\,d\nu\cube {d+1}_\bbullet(\bx)>0 \ .
$$
By Lemma~\ref{lem:positive3}, rewritten in our current notation, we 
have that 
 $$
 \int\widetilde f(x_{00\dots 01})\cdot
\prod_{\substack{\epsilon\in\{0,1\}^{d+1}\\ \epsilon\neq 00\dots 0, 
\epsilon\neq 00\dots 01 }}
\widetilde h_{\epsilon_1\dots\epsilon_d}^{(j-1)}(x_\epsilon)
\,d\nu\cube {d+1}_e(\bx)>0\ .
$$
For convenience, we write $h_{00\dots0}^{(j-1)} =f$ and rewrite this 
equation as
\begin{equation}
\label{eq:horreur2}
 \int
\prod_{\substack{\epsilon\in\{0,1\}^{d+1}\\ \epsilon\neq 00\dots 0}}
\widetilde h_{\epsilon_1\dots\epsilon_d}^{(j-1)}(x_\epsilon)
\,d\nu\cube {d+1}_e(\bx)>0\ .
\end{equation}
By Proposition~\ref{prop:ouf}, this last integral is the iterated 
limit of the averages for $n=(n_1,\dots,n_{d+1})$ of 
$$
 \prod_{\substack{\epsilon\in\{0,1\}^{d+1}\\ \epsilon\neq 00\dots 0}}
h_{\epsilon_1\dots\epsilon_d}^{(j-1)}(T^{n\cdot\epsilon}x_0)\ .
$$
We make a change of indices, writing elements of $\Z^{d+1}$ as 
$(p,n_1,\dots,n_d)$ and setting $n=(n_1,\dots,n_d)$.
Elements of $\{0,1\}^{d+1}$ are written as 
$\eta\epsilon_1\dots\epsilon_d$ with $\eta\in\{0,1\}$ and we set
$\epsilon=\epsilon_1\dots\epsilon_d$.
The last product becomes:
$$
 h_{100\dots 0}^{(j-1)}(T^px_0)
\prod_{\substack{\epsilon\in\{0,1\}^d \\
\epsilon\neq 00\dots 0}}
\bigl(h_{0\epsilon_1\dots\epsilon_{d-1}}^{(j-1)}\cdot 
T^ph_{1\epsilon_1\dots\epsilon_{d-1} }^{(j-1)}\bigr)
(T^{ n\cdot\epsilon} x_0)\ .
$$

For $\epsilon\in\{0,1\}^d$, $\epsilon\neq 00\dots 0$, and for $p\in \Z$, 
set
$$
 g_{p,\epsilon}=h_{0\epsilon_1\dots\epsilon_{d-1}}^{(j-1)}\cdot
T^p h_{1\epsilon_1\dots\epsilon_{d-1}}^{(j-1)}
$$
and rewrite the last expression as
$$
 h_{100\dots 0}^{(j-1)}(T^px_0)\prod_{\substack{
\epsilon\in\{0,1\}^d \\ \epsilon\neq 00\dots 0}}
g_{p,\epsilon}(T^{n\cdot\epsilon}x_0)\ .
$$
By Proposition~\ref{prop:ouf} again, the iterated limit of the 
averages in $n_1,\dots,n_{d}$ converges to
\begin{multline*}
  h_{100\dots 0}^{(j-1)}(T^px_0)\int
\prod_{\substack{
\epsilon\in\{0,1\}^d \\ \epsilon\neq 00\dots 0}}
\E\bigl(g_{p,\epsilon}\mid Z_{d-1}\bigr)
(x_\epsilon)\, d\mu\cube d_{d-1\ e}(\bx)
\\
 =  h_{100\dots 0}^{(j-1)}(T^px_0)\int
\prod_{\substack{
\epsilon\in\{0,1\}^d \\ \epsilon\neq 00\dots 0}}
\widetilde{g_{p,\epsilon}}(x_\epsilon)\, d\mu\cube d_e(\bx)
\end{multline*}
because the measure  $\mu\cube d_e$ is relatively independent with 
respect to $\mu\cube m_{d-1\ e}$ (see 
Proposition~\ref{prop:mesuresmua}, part (d)).

The averages in $p$ over the intervals $\bI$ of this expression 
converge to the limit~\eqref{eq:horreur2}, which is positive.  Thus 
there exists some $p$ (belonging to some $I_i$) such that this 
expression is positive. Choosing $p_{j}$ to be 
this $p$, for $00\ldots 
0\neq\epsilon\in\{0,1\}^d$, we define
$$
 h_\epsilon^{(j)}=g_{p_{j},\epsilon}=
h_{0\epsilon_1\dots\epsilon_{d-1}}^{(j)}\cdot
T^{p_{j}} h_{1\epsilon_1\dots\epsilon_{d-1}}^{(j-1)}
$$
(recall that $h_{00\dots 0}^{(j-1)}=f$).  
Since~\eqref{eq:recurr} is valid with 
$h_\epsilon^{(j)}$ substituted for $h_\epsilon^{(j-1)}$, we can iterate.
Moreover, 
$$
h_{100\dots 0}^{(j-1)}(T^{p_{j}}x_0)>0 \ .
$$

\subsubsection{Interpreting the iteration}
By induction, it follows that for every $j\geq 0$, 
the functions $h_{j,\epsilon}$, $00\ldots 0\neq\epsilon\in\{0,1\}^d$, 
only depend on the 
first nonzero digit of $\epsilon$:
$$
 h_{j,\epsilon}=\phi_{j,k}\text{ if }
\epsilon_1=\dots=\epsilon_{k=1}=0\text{ and }\epsilon_k=1\ .
$$
We have the inductive relations
\begin{gather*}
\phi_{0,k}=f\text{ for } 1\leq k\leq d\ ; \\
\phi_{j-1,1}(T^{p_{j}}x_0)>0\ ; \\
\text{for }1\leq k<d,\quad 
\phi_{j,k}=\phi_{j-1,k+1}\cdot T^{p_{j}}\phi_{j-1,1}\ ;\\
\phi_{j,d}=f\cdot T^{p_{j}}\phi_{j-1,1}\ .
\end{gather*}
By induction, $\phi_{j,1}\leq\phi_{j,2}\leq\dots\leq\phi_{j,d}\leq f$.
Moreover, we deduce the following relations between 
the functions $\phi_{j,1}$:
\begin{align*}
 & \text{for }1\leq j< d, \ 
\phi_{j,1} = f \cdot \prod_{k=1}^j
T^{p_{j-k+1}}\phi_{j-k, 1} \\
& \text{for }j\geq d, \ 
\phi_{j,1} = f \cdot \prod_{k=1}^{d}
T^{p_{j-k+1}}\phi_{j-k, 1}  \ .
\end{align*}

For every $j$, there is a finite  set $E_j$ of integers with
$$
\phi_{j,1}=\prod_{q\in E_j}T^qf\ .
$$
We have that $E_0 = \{0\}$ and the $E_j$ satisfy that relations
\begin{gather*}
\text{for }1\leq j< d,\ 
E_j = \{0\} \cup (E_{j-1}+p_j)\cup (E_{j-2}+p_{j-1})
\cup\ldots \cup (E_{0}+p_{1})\ ,
\\
\text{for }j\geq d,\ 
E_j = \{0\}\cup (E_{j-1}+p_j)\cup (E_{j-2}+p_{j-1})
\cup\ldots \cup (E_{j-d}+p_{j-d+1})\ .
\end{gather*}
By induction, $E_j$ consists in all sums of the form 
$\epsilon_1p_1+\dots+\epsilon_jp_j$ 
where
 $\epsilon_i\in\{0,1\}$ for all $i$,
and, after the first occurrence of $1$, there can be no block of $d$ 
consecutive $0$'s.

By induction, each function $\phi_{j,1}$ only takes on the values 
of $0$ and $1$ and  corresponds to a subset $B_j$ 
of the integers and we have
$$
 B_j=\bigcap_{q\in E_j}(B-q)\ .
$$
For every $j$, since  $\phi_{j-1,1}(T^{p_{j}}x_0)>0$,
we have that $p_{j}\in B_{j-1}$ and thus that $E_{j-1}+p_{j}\subset B$.

We conclude 
that all sums of the form $\epsilon_1p_1+\dots+\epsilon_kp_k$ 
with $\epsilon_i\in\{0,1\}$ for all $i$ belong to $B$, 
provided the $\epsilon_i$ are not all equal to $0$ and that the 
blocks of consecutive $0$'s between two $1$'s have length $< d$.
In other words, $B\supset \SH_d(\{p_j\colon j\geq 1\})$ and we have a 
contradiction.
\qed

\bigskip

We note that at each step in the iteration, we have infinitely many 
choices for the next $p$.  In particular, we can take the $p_j$ 
tending to infinity as fast as we want. 
More interesting, 
in the construction we can choose a different permutation of 
coordinates at each step.  This gives rise to different, but related, 
structures, which do not seem to have any simple description.


\begin{thebibliography}{9999}
\bibitem{AGH}
L. Auslander, L. Green and F. Hahn.
Flows on homogeneous spaces.
{\em Ann. Math. Studies}
{\bf 53}, Princeton Univ. Press (1963).




\bibitem{BFW}
V.~Bergelson, H.~Furstenberg and B.~Weiss.
Piecewise-Bohr sets of integers and combinatorial number theory.
Algorithms Combin. {\bf 26}, Springer, Berlin (2006),
13-37.

\bibitem{Furstenberg}
H.~Furstenberg.
{\em Recurrence in ergodic theory and combinatorial number theory}. 
Princeton University Press, Princeton, N.J., 1981.

\bibitem{gowers}
W.~T.~Gowers. A new proof of Szemer\'edi's Theorem. {\em Geom. Funct. Anal.} {\bf 11} (2001), 
465-588. 

\bibitem{GT}
B.~Green and T.~Tao.
Linear equations in the primes. 
To appear, {\em Ann. Math.}

\bibitem{HK} B.~Host and B.~Kra.
Nonconventional ergodic averages and nilmanifolds.
{\em Ann. Math.} {\bf 161} (2005), 397-488.

\bibitem{HK3}
B.~Host and B.~Kra.
Uniformity norms on $\ell^\infty$ and applications.
To appear, {\em J. d'Analyse}.

\bibitem{leibman}
A.~Leibman.  Pointwise convergence of ergodic averages for polynomial sequences of 
translations on a nilmanifold.  {\em Erg. Th. \& Dyn. Sys.} {\bf 25} 
(2005), 201--113.


\end{thebibliography}
\end{document}